\def\classification#1{\def\@class{#1}}
\DeclareFontFamily{OT1}{rsfs}{}
\DeclareFontShape{OT1}{rsfs}{n}{it}{<-> rsfs10}{}
\DeclareMathAlphabet{\mathscr}{OT1}{rsfs}{n}{it}
\DeclareMathOperator{\Stab}{Stab}
\DeclareMathOperator{\ad}{ad}
\DeclareMathOperator{\supp}{supp}
\DeclareMathOperator{\mo}{\,mod}
\DeclareMathOperator{\GL}{GL}
\DeclareMathOperator{\SO}{SO}
\DeclareMathOperator{\diam}{diam}
\DeclareMathOperator{\SL}{SL}
\DeclareMathOperator{\PSL}{PSL}
\DeclareMathOperator{\Sym}{Sym}
\DeclareMathOperator{\Prob}{Prob}
\DeclareMathOperator{\Alt}{Alt}
\DeclareMathOperator{\Sp}{Sp}
\DeclareMathOperator{\tr}{tr}
\DeclareMathOperator{\Cl}{Cl}
\DeclareMathOperator{\SU}{SU}
\newtheorem{prop}{Proposition}[section]
\newtheorem{thm}[prop]{Theorem}
\newtheorem{conj}{Conjecture}
\newtheorem*{Cha}{Challenge}
\newtheorem{cor}[prop]{Corollary}
\newtheorem{lem}[prop]{Lemma}
\newtheorem{defn}{Definition}
\newenvironment{Rem}{{\bf Remark.}}{}
\theoremstyle{remark}
\numberwithin{equation}{section}
\begin{document}
\title{Growth in groups: ideas and perspectives}
\author{H. A. Helfgott}
\address{Harald A. Helfgott, 
\'Ecole Normale Sup\'erieure, D\'epartement de Math\'ematiques, 45 rue d'Ulm, F-75230 Paris, France}
\email{harald.helfgott@ens.fr}
\subjclass[2010]{Primary: 20F69; Secondary: 20D60, 11B30, 20B05}

\begin{abstract}
This is a survey of methods developed in the last few years
to prove results on growth
in non-commutative groups. These techniques have their roots in both
additive combinatorics and group theory, as well as other fields. We discuss linear algebraic
groups, with $\SL_2(\mathbb{Z}/p\mathbb{Z})$ as the basic
example, as well as permutation groups. The emphasis will lie on the
ideas behind the methods.
\end{abstract}

\maketitle

\begin{center}
{\em In memory of \'{A}kos Seress (1958--2013)}
\end{center}

\section{Introduction}

The study of growth and expansion in infinite families of groups has undergone
remarkable developments in the last decade. We will see some basic themes at 
work in different contexts -- linear groups, permutation groups -- which have
seen the conjunction of a variety of ideas -- 
combinatorial, geometric, probabilistic.
The effect 
has been a still ongoing
 series of results that are far stronger and more general than those known before.

\subsection{What do we mean by ``growth''?}\label{subs:whatisg}

Let $A$ be a finite subset of a group $G$. Consider the sets
\[\begin{aligned}
A&,\\
A\cdot A &= \{x\cdot y : x,y\in A\},\\
A\cdot A\cdot A &= \{x\cdot y\cdot z: x,y,z\in A\},\\
&\dotsc\\
A^k &= \{x_1 x_2 \dotsc x_k : x_i\in A\}.
\end{aligned}\]
Write $|S|$ for the {\em size} of a finite set $S$, meaning simply the number
of elements of $S$. A question arises naturally: how does $|A^k|$ grow as $k$
grows?

This is just one of a web of interrelated questions that, until recently, were
studied within separate areas of mathematics:
\begin{itemize}
\item {\em Additive combinatorics} treats the case of $G$ abelian,
\item {\em Geometric group theory} studies $|A^k|$ as $k\to \infty$ for $G$
infinite,
\item {\em Subgroup classification}, within group theory, can be said to
study the special case $|A| = |A\cdot A|$, since, for $A$ finite with $e\in A$,
we have $|A| = |A\cdot A|$ precisely when $A$ is a subgroup of $G$.
(As we shall later see, this is not a joke, or rather it is a very fruitful
one.)
\end{itemize}

When $G$ is finite, rather than asking ourselves how $|A^k|$ behaves for 
$k\to \infty$ -- it obviously becomes constant as soon as $A^k = \langle A\rangle$, where $\langle A\rangle$ is the subgroup of $G$ generated by $A$ -- we ask
what is the least $k$ such that $A^k = G$. (Here and henceforth we 
assume that $A$ does generate $G$, and that $e\in A$.) 
This value of $k$ is called the
{\em diameter} of $G$ with respect to $A$.
The word ``diameter'' comes from graph theory; as we shall see,
a pair $(G,A)$ has a graph associated to it, and the diameter of the graph
is the same as the diameter we have just defined.

Other key related concepts are:
\begin{itemize}
\item {\em expander graphs}: every expander graph has very small diameter (where
{\em very small} $= O(\log |G|)$), but not vice versa -- that is, expansion is stronger than having very small diameter;
\item {\em mixing time}, meaning the least $k$ such that, when $x_1, \dotsc, x_k$
are taken uniformly and at random within $A$, the distribution of the product
$x_1\dotsb x_k$ is close to uniform. 
\end{itemize}
We will make both notions precise later.

This survey covers a series of rapid developments on these questions since 2005.
The focus will be on $G$ finite. Some of the results here extend to infinite 
groups with ease; indeed, working with, say, matrix groups over $\mathbb{Z}/p
\mathbb{Z}$ is often strictly harder than working over matrix groups over 
$\mathbb{R}$ or $\mathbb{C}$. The emphasis will be on methods -- often
with
a distinct combinatorial flavor -- that made strong, general results on finite
groups possible.

Several main themes run through proofs that seem very different on
the technical surface. One of them is the idea of {\em stable configurations
under group actions} as the main object of study. It is this, with 
slowly growing sets ({\em approximate subgroups}) as a special case, that
encapsulates not just the results but a great deal of the approach to proving 
them.

Contributions have been made by mathematicians from many fields. 
While one cannot claim or attempt completeness, I have tried to give as 
full a historical picture
 as I have been able to; 
this is so in part to give credit to all those
to whom credit is due, but also so as to familiarize readers who are experts
in a field with work coming from other domains,
and most of all so as to make both the ideas themselves and their actual
development clear.

\subsection{Main results covered.}

Our focus will lie on the ideas behind the main growth results, 
going from
$\SL_2(\mathbb{Z}/p\mathbb{Z})$ (\cite{Hel08}, reexamined in the light of
 \cite{HeSL3}, \cite{BGT}, \cite{PS}) up to new work on the symmetric group
\cite{MR3152942}. We will be able to give essentially complete proofs of Theorems
\ref{thm:main08} to \ref{thm:hs} below.

In particular, this includes Thm.~\ref{thm:bg} (Bourgain-Gamburd), the
result that set a paradigm on how to prove expansion starting from a result
on growth in groups.  We will spend less time on the applications of
expansion; while that is undoubtedly a rich topic, 
it is one whose developments to date
have been well covered in \cite{KowBourbaki} and \cite{MR2869010}.
The notes \cite{Konline} and \cite{Tonline} treat some of the material in the
present survey as well as some applications.

\subsubsection{Growth in linear groups.}
One of our goals will be to show the main ideas behind proofs of growth
in linear groups of bounded rank. In particular, we will give most of
the details of what amounts to an ``up-to-date'' proof of the following 
result, in such a way that the proof generalizes naturally. (In other words,
it will incorporate ideas from the series of developments to which the 
first proof gave rise.)

\begin{thm}[Helfgott \cite{Hel08}]\label{thm:main08}
Let $G = \SL_2(\mathbb{F}_p)$. Let $A\subset G$ generate $G$. Then either
\[|A^3|\geq |A|^{1+\delta}\]
or $(A \cup A^{-1}\cup \{e\})^k = G$, where $\delta>0$ and 
$k\geq 1$ are absolute constants.
\end{thm}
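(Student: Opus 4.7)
The plan is to prove the contrapositive: assuming $|A^3| < |A|^{1+\delta}$ for a sufficiently small $\delta>0$, show that $A$ is effectively trapped in a proper subgroup of $G$, contradicting $\langle A\rangle = G$ (the case $|A|\geq p^{3-o(1)}$ is handled separately by an elementary covering argument that gives $A^k = G$). First I would apply Ruzsa/Pl\"unnecke--Petridis-style tripling inequalities, adapted to non-abelian groups, to bootstrap $|A^3|\leq K|A|$ with $K=|A|^\delta$ into $|A^k|\leq K^{O_k(1)}|A|$ for every bounded $k$; and I would replace $A$ by $(A\cup A^{-1})^m\cup\{e\}$ for a small $m$, so as to be working with a symmetric, identity-containing $K^{O(1)}$-approximate subgroup. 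From this point on, what matters is only that $A^k$ has size $|A|^{1+O(\delta)}$ for every $k=O(1)$.

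The next ingredient is \emph{escape from subvarieties}. The set of non-regular-semisimple elements of $\SL_2$ is cut out by the proper subvariety $\tr^2=4$, and because $A$ generates the essentially simple group $G$, a bounded-length product of elements of $A$ cannot remain in this subvariety. So some $g\in A^{O(1)}$ is regular semisimple, with centralizer $T=C_G(g)$ a maximal torus of order $p\pm 1$, and with conjugacy class $C_g$ of size $\approx p^2$ uniquely determined by the trace $\tr(g)\in\mathbb{F}_p$.

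The decisive step is the \emph{trace trick}, which exploits the $\SL_2$-identity $\tr(xy)+\tr(xy^{-1})=\tr(x)\tr(y)$. Setting $R:=\tr(A^{O(1)})\subset\mathbb{F}_p$, I would combine this identity with conjugations by elements of $A^{O(1)}$ (which permute maximal tori and so create many independent trace values) to show that both $R+R$ and $R\cdot R$ lie, up to polynomial losses, inside $\tr(A^{O(1)})$. In other words, $R$ is an approximate subring of $\mathbb{F}_p$. The Bourgain--Katz--Tao (equivalently Bourgain--Glibichuk--Konyagin) sum-product theorem then forces $|R|$ to be either $O(1)$ or $\geq p^{1-o(1)}$.

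One then splits into cases. If $|R|=O(1)$, then almost all of $A$ is conjugate to one of a bounded number of regular semisimple elements, which — after controlling the fiber sizes of $\tr|_A$ by counting in conjugacy classes — pins $A$ inside a bounded union of cosets of a single $T$ or its normalizer, a proper subgroup, contradicting $\langle A\rangle=G$. If $|R|\geq p^{1-o(1)}$, then already $|A|\geq p^{1-o(1)}$, and the elementary covering argument mentioned above finishes the proof. The main obstacle, I expect, is the trace trick itself: propagating \emph{both} approximate additive \emph{and} approximate multiplicative closure from a non-abelian set $A$ to its trace image in $\mathbb{F}_p$ requires careful bookkeeping on conjugation orbits, fiber sizes, and centralizer structure, and this is where the full weight of the approximate-subgroup machinery has to be brought to bear. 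The sum-product theorem is then invoked as a black box.
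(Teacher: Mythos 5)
Your proposal is correct in outline but takes a genuinely different route from the paper. What you describe---propagating approximate additive and multiplicative structure to the trace set $R=\tr(A^{O(1)})\subset\mathbb{F}_p$ via the identity $\tr(xy)+\tr(xy^{-1})=\tr(x)\tr(y)$, then invoking the sum-product theorem as a black box---is essentially the original route of \cite{Hel08}. The proof given in \S \ref{subs:pivoro} of the paper is deliberately different: after producing a regular semisimple $g_0\in A^2$ by escape (as in your sketch), it does \emph{not} pass to traces but instead feeds the lower bound $|A^2\cap C(g)|\gg|A|^{1/3-O(\delta)}$, valid for \emph{every} regular semisimple $g$ in a bounded power of $A$ (Cor.~\ref{cor:nessumo}, deduced from the dimensional estimate Prop.~\ref{prop:bantu} via the orbit-stabilizer theorem for sets), directly into a pivoting argument on $\phi_\xi(a,t)=a\xi t\xi^{-1}$, in parallel with Prop.~\ref{prop:jutor} for the affine group. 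The paper explicitly presents this as a ``modern'' proof with no reliance on the sum-product theorem. What that buys: the pivoting-plus-dimensional-estimates machinery generalizes uniformly to all simple groups of Lie type (\cite{PS}, \cite{BGT}), whereas trace identities are peculiar to $\SL_2$; and the logical dependency is reversed---the paper argues that the sum-product theorem is best viewed as a \emph{consequence} of growth in the affine group (\S \ref{subs:pivo}) rather than as an input.

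There is, however, a concrete misstep in your $|R|=O(1)$ case. A bounded trace set for $A^{O(1)}$ confines $A$ to a bounded union of fibers of $\tr$, but those fibers are entire conjugacy classes (of size $\approx p^2$ for regular semisimple trace values), which are certainly not contained in a bounded union of cosets of a torus or its normalizer; so the conclusion ``pins $A$ inside cosets of a single $T$'' does not follow. The correct conclusion is that $|A|$ itself is $O(1)$: each regular semisimple conjugacy class meets $A$ in $\ll|A^{O(1)}|^{2/3}\ll|A|^{2/3+O(\delta)}$ elements (this is exactly Prop.~\ref{prop:bantu}; the trace-$\pm2$ fibers need a minor variant), so $|R|=O(1)$ forces $|A|\ll|A|^{2/3+O(\delta)}$, hence $|A|=O(1)$, and a bounded generating set grows trivially for $\delta$ small. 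Note that this repairs the case but at the cost of invoking the dimensional estimate anyway, which suggests the trace detour is not really saving you that machinery. Finally, for the large-$|A|$ endgame the paper uses Prop.~\ref{prop:diplo} (Gowers--Nikolov--Pyber, via high eigenvalue multiplicity), giving the sharp $A^3=G$; the ``elementary covering argument'' of \cite{Hel08} does work here but gives a worse constant $k$.
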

Here, as usual, ``absolute constant'' means ``really a constant'';
in particular, the constants $\delta$ and $k$ do not depend on $p$.
Nikolov-Pyber \cite{MR2800484} (following Gowers \cite{MR2410393};
see also \cite{BNP}) showed one 
can replace $(A \cup A^{-1}\cup \{e\})^k$ by $A^3$.
 Kowalski \cite{MR3144176} has shown one can take $\delta=1/3024$
(assuming $A=A^{-1}$, $e\in A$; we will see in \S \ref{sec:ruzs} that
these are very ``light'' assumptions). On the other end, Button and
Roney-Dougal \cite{BRD} have given an example that shows that
$\delta \leq (\log_2 7 - 1)/6 = 0.30122\dotsc$.

There are two main
kinds of generalizations: changing the field and changing the
Lie type. We will discuss them at the end of \S \ref{subs:pivoro}. In brief
-- Thm. \ref{thm:main08} is now known to hold (with $\delta$ depending on
the rank) for all finite simple groups of Lie type (\cite{PS}, \cite{BGT});
there are also generalizations to $\mathbb{C}$ \cite{BGSU2} and to
solvable groups \cite{GH2}.

\begin{cor}\label{cor:mesmar}
The diameter of
$G_p = \SL_2(\mathbb{Z}/p \mathbb{Z})$ with respect to any set of generators
$A$ is $(\log |G_p|)^{O(1)}$. 
\end{cor}
The proof of this Corollary to Thm.~\ref{thm:main08} is really a simple 
exercise; this is a good point for the reader to grab a nearby pencil.
\begin{proof}
Applying Thm.~\ref{thm:main08}
$\ell$ times, we obtain that either 
\begin{equation}\label{eq:josro}
|A^{3^\ell}|\geq |A|^{(1+\delta)^\ell}\end{equation}
or 
\begin{equation}\label{eq:korso}
(S \cup S^{-1} \cup \{e\})^k = G_p\end{equation}
for $S = A^{3^{\ell-1}}$. We choose 
$\ell = \lfloor (\log (\log |G_p|/\log |A|))/\log (1+\delta) + 1\rfloor$,
which, if (\ref{eq:josro}) were true, would give us $|A^{3^\ell}|>|G|$; since
this is absurd,
(\ref{eq:korso}) holds, and so the diameter of
$G_p$ with respect to $A\cup A^{-1}$ is 
at most 
\begin{equation}\label{eq:koloko}
 \ll k (\log |G_p|/\log |A|)^{\frac{1}{\log 1+\delta}} \ll
(\log |G_p|)^{O(1/\delta)}.\end{equation}
Now, a general result
 \cite[Thm.~1.4]{Babeul} states that the diameter
$d^+$ of a group $G$ with respect to a set of generators $A$ 
is at most $d^2 (\log |G|)^3$, where $d$ is the diameter of $G$
with respect to the set of generators $A \cup A^{-1}$. Thus,
(\ref{eq:koloko}) implies that the diameter of $G_p$ with respect to $A$ is 
\begin{equation}\label{eq:monro}
\ll (\log |G_p|)^{O(2/\delta) + 3}
= (\log |G_p|)^{O(1/\delta)}.\end{equation}
(We could have avoided the use of \cite[Thm.~1.4]{Babeul} by
applying the improved version of Thm.~\ref{thm:main08}, with
$A^3 = G$ instead of $(A\cup A^{-1} \cup \{e\})^k$, but why avoid
quoting an elegant general result that the reader could find
useful?) Since $\delta$ is an absolute constant and $|G_p|\geq 6$,
(\ref{eq:monro}) bounds the diameter by $(\log |G_p|)^{O(1)}$, as was desired.
\end{proof}

A conjecture due to Babai (see \cite[p.~176]{BS88}) states that, 
for any finite, non-abelian
simple group $G$ and any set of generators $A$, the diameter of 
$G$ with respect to $A$ is $O\left((\log |G|)^C\right)$, where $C$ is
an absolute constant. Thus, Cor. \ref{cor:mesmar}
states that Babai's conjecture holds for 
$G = \SL_2(\mathbb{Z}/p\mathbb{Z})$ 
and $A$ arbitrary. (We should actually say: ``for $G = 
\PSL_2(\mathbb{Z}/p\mathbb{Z})$'', since $\SL_2$ is just almost simple,
whereas $\PSL_2$ really is simple.) 

Consider now an important special case: let $A$ be a subset of
$\SL_2(\mathbb{Z})$, and suppose that
its projections $A_p = A \mo p$ generate $G_p$ for $p$ larger than a constant.
This is the case, in particular, if $\langle A\rangle$ is
Zariski-dense, i.e., if it is not contained in a proper subvariety of
$\SL_2$. (The implication is proven, in varying degrees of
generality, in \cite{MR763908}, \cite{MR735226}, \cite{MR880952} and
\cite{MR1329903}.) Both (\ref{eq:hosen}) and (\ref{eq:pantalon}) are
examples of valid $A$.

It is not hard to show (as we will sketch in \S \ref{subs:majaro}) that,
in this situation, Thm.~\ref{thm:main08} implies a logarithmic bound
($O_A(\log |G|)$) for the diameter of $G_p$ with respect to $A_p$.
Bourgain and Gamburd proved a rather stronger statement.

\begin{thm}[Bourgain-Gamburd \cite{MR2415383}]\label{thm:bg}
Let $A\subset \SL_2(\mathbb{Z})$ generate a Zariski-dense subgroup
of $\SL_2(\mathbb{Z})$. Then 
$(\SL_2(\mathbb{Z}/p\mathbb{Z}),A \mo p)$ are a family of expanders.
\end{thm}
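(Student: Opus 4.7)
The plan is to implement the three-step Bourgain--Gamburd strategy, in which Theorem~\ref{thm:main08} enters as a black box. Let $\mu$ be the uniform probability measure on $A \cup A^{-1} \mo p$ in $G_p = \SL_2(\Z/p\Z)$, and let $\mu^{(k)} = \mu \ast \mu \ast \cdots \ast \mu$ ($k$ factors). To prove expansion it suffices to show that for some $k = O(\log |G_p|)$ we have $\|\mu^{(k)} - u_{G_p}\|_2 \leq |G_p|^{-1/2 - \eta}$, where $u_{G_p}$ is the uniform measure; equivalently, $\|\mu^{(k)}\|_2^2 \leq (1 + |G_p|^{-2\eta})/|G_p|$. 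Given such a bound, a standard spectral gap argument produces the family of expanders.

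First, I would establish \emph{non-concentration in proper subgroups}: for $k \asymp \log p$,
\[\mu^{(k)}(H) \leq p^{-\delta}\]
for every proper subgroup $H \leq G_p$. Zariski-density of $\langle A\rangle$ in $\SL_2$ rules out $A$ being trapped in any fixed proper algebraic subgroup; the Eskin--Mozes--Oh ``escape from subvarieties'' trick, together with the classification of maximal subgroups of $\SL_2(\F_p)$ (Borel, normalizer of a torus, or of order $O(1)$), turns this into the required logarithmic-girth-type bound. Here I would also record the closely related fact that $\mu^{(k)}$ has no large $L^\infty$-atom at any single element, since the free subgroup supplied by the Tits alternative forces $|\supp \mu^{(k)}| \gg e^{ck}$.

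Next comes the \emph{$L^2$-flattening lemma}, the genuine core of the argument and the one I expect to be the main obstacle. The claim is that if $\nu = \mu^{(k)}$ satisfies $\|\nu\|_2 \geq |G_p|^{-1/2 + \kappa}$ and still obeys the non-concentration bound from the previous step, then $\|\nu \ast \nu\|_2 \leq |G_p|^{-\kappa'} \|\nu\|_2$ for some $\kappa' = \kappa'(\kappa) > 0$. The proof runs by contradiction: if $\|\nu \ast \nu\|_2$ is close to $\|\nu\|_2$, a noncommutative Balog--Szemer\'edi--Gowers theorem (applied to the ``approximate level set'' $\{g : \nu(g) \asymp t\}$ for a suitable dyadic $t$) extracts a set $A'$ of size comparable to $\|\nu\|_2^{-2}$ with $|A' \cdot A' \cdot A'| \leq |A'|^{1+\delta/2}$. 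By Theorem~\ref{thm:main08}, $A'$ must then be contained in a proper subgroup of $G_p$, contradicting the non-concentration estimate. Iterating this flattening $O(\log\log|G_p|)$ times starting from $k \asymp \log p$ drives $\|\mu^{(k')}\|_2$ down to $|G_p|^{-1/2 + \eta}$ for some $k' \asymp \log p$ and arbitrarily small $\eta$.

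Finally I invoke \emph{quasi-randomness} of $G_p = \SL_2(\F_p)$: by a classical theorem of Frobenius, every nontrivial irreducible representation of $\SL_2(\F_p)$ has dimension $\geq (p-1)/2$. Writing the convolution operator $T_\mu f = \mu \ast f$ and decomposing on the nontrivial part of $L^2(G_p)$, every nonzero eigenvalue of $T_\mu$ has multiplicity at least $(p-1)/2 \asymp |G_p|^{1/3}$. Hence
\[\|\mu^{(k')}\|_2^2 - \frac{1}{|G_p|} = \sum_{\lambda \neq 1} \lambda^{2k'} \geq \frac{p-1}{2}\,\lambda_1^{2k'},\]
so the bound $\|\mu^{(k')}\|_2^2 \leq (1 + |G_p|^{-2\eta})/|G_p|$ from Step~2, combined with $k' \asymp \log p$, forces $\lambda_1 \leq 1 - c$ for an absolute constant $c > 0$. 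This is the expander bound. The main difficulty, as noted, lies in the flattening step: converting a purely $L^2$ statement about $\nu \ast \nu$ into a combinatorial statement to which Theorem~\ref{thm:main08} can be applied requires the noncommutative Balog--Szemer\'edi--Gowers machinery and careful dyadic pigeonholing, and is where all the prior work on approximate groups is spent.
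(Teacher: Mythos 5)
Your proposal follows the Bourgain--Gamburd strategy exactly as the paper does: (i) logarithmic initial decay and non-concentration in proper subgroups, (ii) iterated $\ell^2$-flattening via non-commutative Balog--Szemer\'edi--Gowers and Theorem~\ref{thm:main08}, and (iii) the Sarnak--Xue quasi-randomness / high-multiplicity endgame. Two remarks on where you deviate. First, for the non-concentration step you reach for ``escape from subvarieties'' plus the classification of maximal subgroups; the paper's $\SL_2$ argument is lower-tech: after passing (via the Tits alternative and Nielsen--Schreier) to a bounded free generating set $A'\subset\langle A\rangle\cap\Gamma(2)$, words of length $\lesssim\log p$ are distinct mod $p$, every proper subgroup of $\SL_2(\F_p)$ has a $2$-step solvable subgroup of bounded index, and the number of short words in a free group satisfying the relation $\lbrack\lbrack g_1,g_2\rbrack,\lbrack g_3,g_4\rbrack\rbrack=e$ is only polynomial in the length, by cyclicity of centralizers in a free group. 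Your ``escape in mass from subvarieties'' route is the one needed for general simple $G$ (Salehi Golsefidy--Varj\'u), where no such convenient free finite-index normal subgroup exists; it works here too but is heavier than necessary. Second, in the flattening step you assert that a small-tripling set $A'$ ``must be contained in a proper subgroup,'' but Theorem~\ref{thm:main08} offers a second alternative, namely $A'^3=G_p$, i.e.\ $|A'|\geq|G_p|^{1-O(\delta)}$. That alternative is not a contradiction to non-concentration; it is precisely the exit from the iteration, since it already yields $\|\nu\|_2\ll|G_p|^{-1/2+O(\delta)}$, after which one goes straight to Step~3. The dichotomy should be phrased with both escape routes in view.
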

In general, expansion is stronger than logarithmic mixing time, which 
is stronger than logarithmic diameter. Bourgain and Gamburd first show that 
a kind of mixing time (in a very weak
sense: an $\ell$ such that $|\mu^{(\ell)}|_2^2\ll |G|^{-1+\epsilon}$,
where $\mu^{(\ell)}$ is the distribution after $\ell$ steps of
a random walk) is indeed logarithmic: 
for the first $\log_c(p/2)$ steps, a random walk mixes well 
(for the same reason as what we said we will sketch in \S \ref{subs:majaro}); 
then, for a constant number of steps, they apply a
result on the $\ell_2$ ``flattening'' of measures under convolutions
(Prop.~\ref{prop:flatlem}) that they prove using Thm.~\ref{thm:main08}
(via a non-commutative version \cite{MR2501249}
of a result from additive combinatorics, the Balog-Szemer\'edi-Gowers 
theorem). The fact that expansion does follow from a logarithmic bound on 
this kind of weak mixing time for groups such as $\SL_2(\mathbb{Z}/p
\mathbb{Z})$ is due to Sarnak-Xue \cite{SarnakXue}. We will
go over this in detail in \S \ref{subs:majaro}.

Theorem \ref{thm:bg} has found manifold applications
(see, e.g., \cite{MR2587341} (the {\em affine sieve})); 
we refer again
to \cite{KowBourbaki} and \cite{MR2869010}. Traditionally,
 such results as we had on
expansion in $\SL_2(\mathbb{Z}/p\mathbb{Z})$ were deduced from results
on the spectral gap of the (continuous) Laplacian on the surface 
$\Gamma(p)\backslash \mathbb{H}$. Thm.~\ref{thm:bg} is a much more general
result, based on a combinatorial result, namely, Thm.~\ref{thm:main08}.

Later, Bourgain, Gamburd and Sarnak \cite{MR2892611} reversed the
traditional direction of the 
implication, showing that Thm.~\ref{thm:bg} can be used to obtain
spectral gaps for the Laplacian on general quotients 
$\Lambda \backslash \mathbb{H}$ ($\Lambda<\SL_2(\mathbb{Z})$ 
Zariski-dense in $\SL_2$).

By now, as we shall see in \S \ref{subs:pivoro}, Thms. \ref{thm:main08} and
\ref{thm:bg} have been generalized several times. There remain two open 
questions, both related to uniformity. One is whether 
$(\SL_2(\mathbb{Z}/p\mathbb{Z}),A_p)$ might be a family of expanders as $A_p$ 
ranges over all generating
sets $A_p\subset \SL_2(\mathbb{Z}/p\mathbb{Z})$. The other one concerns
uniformity on the {\em rank} of the group. Thm.~\ref{thm:main08}  is now known 
to hold over $\SL_n$ -- which is of rank $n-1$ -- but only if $\delta$ is 
allowed to depend on $n$. If $\delta>0$ is held constant and $n\to \infty$,
counterexamples are known. The question is then whether consequences such as 
Cor.~\ref{cor:mesmar} might still be true uniformly, i.e., with implied 
constants independent of $n$.


\subsubsection{The symmetric group and beyond}
The Classification of Finite Simple Groups tells us that every finite, simple,
non-abelian group is either a matrix group, or the alternating group 
$\Alt(n)$, or one of a finite list of exceptions. (The list is
irrelevant for asymptotic statements, precisely because it is finite.)
All of the above work on matrix groups leaves unanswered the corresponding
questions on diameter and growth in $\Alt(n)$ and other permutation groups.

The question of the diameter of permutation groups can be
stated precisely in a playful way.
Let a set $A$ of ways to scramble a finite set $\Omega$ be given.
This is the familiar setting of {\em permutation puzzles}: Rubik's cube, 
Alexander's star, Hungarian rings\dots .  People say that a position has
a solution if it can be unscrambled back to a fixed
`starting position' by means of some succession of moves in $A$. Given
that we are told that a position has a solution, does it follow that
it has a short solution?

The answer is {\em yes} \cite{MR3152942}. The only condition is that
$\langle A\rangle$ be {\em transitive}, i.e., that, given two elements
$x$, $y$ of $\Omega$, there be a succession of moves in $A$ that, when
combined, take $x$ to $y$.
Transitivity is necessary: it is easy
to construct a non-transitive group of very large diameter
\cite[Example 1.2]{BS92}. (However, if the number of orbits\footnote{An {\em
    orbit}, in a permutation group $G<\Sym(n)$, 
means an orbit of $\{1,2,\dotsc,n\}$ under the action of $G$. Thus,
Rubik's cube has three orbits: corners, sides and centers (if you
are allowed to rotate the cube in space).} is bounded,
then the problem reduces to the transitive case.) 


Thinking for a moment, we see that a request for short 
solutions is the same as a request for a small diameter: if $A = A^{-1}$,
the diameter of $\langle A\rangle$ with respect to $A$ equals the maximum, over
all positions, of the length of the shortest solution to that position.

As \cite{BS92} showed, questions on the diameter of permutation groups
reduce (with some loss) to the case $G=\Alt(n)$. Babai's
conjecture 
gives that the diameter of $\Alt(n)$ with respect to any set of generators
is $\ll (\log |G|)^{O(1)} \ll n^{O(1)}$.
This special case of the conjecture actually predates Babai; \cite[p. 232]{BS92} calls it ``folkloric''. There are earlier references in print \cite{KMS84},
\cite{McK} where the question is posed as to the exact conditions
under which a bound of $n^{O(1)}$ might be valid; in particular 
(\cite[\S 4]{KMS84}), might
transitivity be sufficient? For an arbitrary transitive subgroup of
$\Sym(n)$, Babai and Seress
\cite[Conj. 1.6]{BS92} conjectured the weaker bound
\begin{equation}\label{eq:kok}
\leq \exp((\log n)^{O(1)})\end{equation} on the diameter with respect
to any set of generators.

\begin{thm}[Helfgott-Seress \cite{MR3152942}]\label{thm:hs}
Let $G = \Sym(n)$ or $\Alt(n)$. Let $A$ be any set of generators of
$G$. Then
the diameter of $G$ with respect to $A$ is 
\[\leq \exp(O((\log n)^4 \log \log n)),\] where the implied constant is absolute.
\end{thm}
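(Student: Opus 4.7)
The plan is to reduce the diameter bound to the production of a short word representing a non-trivial permutation of small support, and then to exploit such an element via a classical construction. More precisely, the intermediate target is: exhibit a word $w = a_1 \dotsb a_\ell$, with $a_i \in A \cup A^{-1}$ and $\ell \leq \exp(O((\log n)^4 \log \log n))$, such that $w \neq e$ and $|\supp(w)| \leq (1-\eta) n$ for some absolute constant $\eta > 0$. Once such a $w$ is in hand, a theorem of Babai--Beals--Seress (or a direct $3$-cycle construction) gives that every element of $G$ is a product of $n^{O(1)}$ conjugates of $w$ by elements of $G$; rewriting each conjugator as a word of length $\leq \ell$ in $A$ yields the claimed diameter bound.

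To construct the small-support element, I would proceed by an iterative descent on support size. Start with any non-identity $g_0 \in A^3$, of support $s_0 \leq n$. At stage $i$, given $g_i \neq e$ of support $s_i$, produce $g_{i+1}$ by comparing $g_i$ with a short-word conjugate $h g_i h^{-1}$: if the conjugate agrees with $g_i$ on a large enough set, then the commutator (equivalently, the product $g_i^{-1} h g_i h^{-1}$) is a non-trivial element of strictly smaller support. Finding a suitable $h$ is done by analyzing the action of a short product-set $A^{k_i}$ on pairs, or on $t$-tuples of $[n]$ with $t = O(\log n)$; a growth and pigeonhole argument in this auxiliary action guarantees that $k_i = \exp(O((\log n)^c))$ suffices. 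After $O(\log n)$ stages the support of the running element is driven below $(1-\eta) n$, and the cumulative word length is $\exp(O((\log n)^{c+1}))$; careful bookkeeping of how orbits on $t$-tuples deteriorate under the descent tightens the intermediate exponent to produce the claimed $4$.

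The principal obstacle is non-degeneracy: at every stage one must guarantee that the newly constructed $g_{i+1}$ is non-trivial and has strictly smaller support than $g_i$. This is an ``escape from stabilizers'' problem: one must show that short products in $A$ do not all lie in any single stabilizer of a prescribed small set of points, quantitatively enough that the desired $h$ is found within $A^{k_i}$. Unlike the linear-group setting, there is no obvious analogue of Thm.~\ref{thm:main08} available here, since the action of $\Sym(n)$ on $[n]$ is very far from algebraic; one is forced to substitute purely combinatorial growth estimates for orbits on $t$-tuples, which are substantially weaker than their linear-group counterparts. Quantifying this escape uniformly in $n$, while maintaining a constant-factor support reduction per stage, is the heart of the argument, and it is exactly what drives the $(\log n)^4 \log \log n$ term rather than anything sharper.
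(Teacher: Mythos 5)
Your outline identifies the right \emph{eventual} target (produce a short non-identity word of small support and then invoke Babai--Beals--Seress, i.e.\ Cor.~\ref{cor:sto}), but the mechanism you propose for getting there does not work, and it is precisely where the difficulty of the theorem lies. The commutator trick $g\mapsto [g,\sigma^{-1}g\sigma]$ from Prop.~\ref{prop:lalka} reduces support roughly as $s\mapsto 3s^2/n$. This is a quadratically convergent contraction \emph{only once $s<n/3$}; if $s$ is close to $n$ the bound $3s^2/n$ is vacuous, and in fact the commutator typically has full support. So an iteration started from an arbitrary $g_0\in A^3$ (whose support may be $n$) never gets off the ground, and the ``descent on support size'' you describe has no base case. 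Moreover your intermediate target ``$|\supp(w)|\leq(1-\eta)n$'' is in any case too weak to invoke BBS, which needs support strictly below $n/3$ (or $0.63n$ with \cite{7author}); and once you \emph{are} below $n/3$, the commutator iteration converges doubly exponentially fast, so the number of stages would be $O(\log\log n)$, not $O(\log n)$ --- none of which matches the bound you are trying to justify. You also gesture at ``a growth and pigeonhole argument in this auxiliary action'' to produce a suitable conjugator $h$, but this is exactly the hard escape step, and hand-waving past it is the gap.

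The actual proof in \cite{HS} does not try to reduce support at all in the hard regime. When no short word of small support is available, it instead constructs a long stabilizer chain $A>A_{(\alpha_1)}>A_{(\alpha_1,\alpha_2)}>\cdots$ whose orbits stay large (in the spirit of Sims), and shows via Babai's splitting lemma --- adapted from a probabilistic statement about subgroups to a stochastic statement about random walks with short word length (Lem.~\ref{lem:juto}) --- that a moderately short power $A'$ intersects many cosets of a pointwise stabilizer $\Sym(n)_{(\Sigma)}$ inside the setwise stabilizer $\Sym(n)_\Sigma$. The orbit-stabilizer theorem for sets (Lem.~\ref{lem:orbsta}) then converts this, via the conjugation action of the setwise stabilizer on a small transitive generating set inside the pointwise stabilizer (Prop.~\ref{prop:crat}, Cor.~\ref{cor:tabor}), into \emph{many} elements of $A^{n^{O(\log n)}}$ lying in a pointwise stabilizer. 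One then iterates, lengthening the stabilizer chain rather than shrinking supports; a CFSG-based \emph{descent} step handles the case where the pointwise stabilizer fails to act as $\Sym$ or $\Alt$ on a large orbit. The $(\log n)^4\log\log n$ exponent arises from bookkeeping this iteration, not from any support-reduction loop. In short, the missing idea in your proposal is the replacement of ``support reduction'' by ``stabilizer-chain growth'' --- this is the heart of the paper, and without it the argument does not start.
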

By a general result \cite{BS92} bounding the diameter of a transitive group in 
terms of the diameter of its largest factor of the form $\Alt(n)$,
Thm.~\ref{thm:hs} implies that (\ref{eq:kok}) holds for all transitive
permutation groups on $n$ elements and any set of generators.

What is the importance of $\Sym(n)$ and $\Alt(n)$, 
from the perspective of linear 
groups? It is not just a matter of historical importance (in that conjectures
for permutation groups preceded Babai's more general conjecture) or of 
generality. The groups $\Sym(n)$ and $\Alt(n)$ are, in a sense, creatures
of pure rank; $\Sym(n)$ corresponds particularly closely to what $\SL_n$
over a field with one element would be like.\footnote{While the field with
one element does not exist, objects over the field with one element can be 
defined and studied. This is an idea going back to Tits \cite{MR0108765}; see, e.g.,
\cite{MR2917136}.} Uniformity on the rank is precisely what
is still missing in the linear algebraic case; the new result on $\Sym(n)$ and
$\Alt(n)$ can be seen -- optimistically --  
as breaking the barrier of rank dependence, just as 
\cite{Hel08} showed that independence on the field was a feasible goal.

\begin{center}
* * *
\end{center}

In parallel to the work on permutation groups in the line of \cite{BLS87},
\cite{BBS04} et al. -- works having their roots in the study of algorithms --
there is also an entire related area of study coming from probability theory.
This area is well represented by the text \cite{MR2466937}; the emphasis there
is in part on mixing times for random processes that may be more general than a random walk. See, for example, results expressed in terms of card-shuffling, 
such as the Bayer-Diaconis ``seven-shuffle'' theorem \cite{MR1161056}.

The interest in studying the diameter and the expander properties of 
$\langle A\rangle$ with respect to $A$ for $A=\{g,h\}\subset \Sym(n)$,
$g$, $h$ {\em random}, comes
in part from this area. (This is also of interest for linear algebraic
groups; see \cite{MR2415383}.) Here a result of Babai-Hayes \cite{BH} based
on \cite{BBS04} shows that, almost certainly (i.e.,
with probability tending to $1$ as $n\to \infty$), the diameter of
$\langle A\rangle$ with respect to $A$ is polynomial in $n$. (A classical
result of Dixon \cite{dixon} states that $\langle A\rangle$ is almost certainly
$\Sym(n)$ or $\Alt(n)$.) Schlage-Puchta \cite{MR2965280} improved the bound to
$O(n^3 \log n)$.

Recent work \cite{HSZ} proves that the diameter
of $\langle A\rangle$ with respect to $A$ 
is in fact $n^2 (\log n)^{O(1)}$ with probability
tending to $1$ as $n\to \infty$; the mixing time is $n^3 (\log n)^{O(1)}$.
At play is a combination of probabilistic and combinatorial ideas 
(\cite{BroSha}, \cite{BBS04}); there is some common ground with the ideas in 
\cite{MR3152942} (discussed here in \S \ref{subs:genset}) on 
generation and random walks.

\subsection{Notation.}
By $f(n)\ll g(n)$, $g(n)\gg f(n)$ and $f(n) = O(g(n))$ we mean the 
same thing, namely, that there are $N>0$, $C>0$ such that $|f(n)|\leq C\cdot 
g(n)$ for all $n\geq N$. We write $\ll_a$, $\gg_a$, $O_a$ if 
$N$ and $C$ depend on $a$ (say). 

As usual, $f(n) = o(g(n))$ means that
$|f(n)|/g(n)$ tends to $0$ as $n\to \infty$.
We write $O^*(x)$ to mean any quantity at most $x$ in absolute value.
Thus, if $f(n)=O^*(g(n))$, then $f(n)=O(g(n))$ (with $N=1$ and $C=1$).

Given a subset $A\subset X$, we let
 $1_A:G\to \mathbb{C}$ be the characteristic function of
$A$:
\[1_A(x) = \begin{cases} 1 &\text{if $x\in A$,}\\ 0 &\text{otherwise.}
\end{cases}
\]
\subsection{Acknowledgements.}
The present paper was written in fulfillment of the conditions for the author's
Adams Prize (Cambridge). The author is very grateful to the Universidad
Aut\'onoma de Madrid 
for the opportunity to lecture on the subject during
his stay there as a visiting professor; special thanks are due to J.
Cilleruelo. Notes taken by students during his previous
series of lectures at the AQUA summer school (TIFR, Mumbai, 2010) were also
helpful, as was the opportunity to teach at the GANT school in EPFL (Lausanne),
and, in the last stages of preparation of the text, at the
 Analytic Number Theory workshop at IHES.
The author would like to thank the organizers in general and
E. Kowalski and Ph. Michel in particular.

A special place in the acknowledgements is merited by A. Granville, whose course and whose
notes \cite{Gra} introduced the author to additive combinatorics.
E. Kowalski, M. Rudnev, P. Varj\'u and E. Vlad
 must also be thanked profusely
for their close reading of large sections of the paper.
Many thanks are also due to L. Babai, F. Brumley, N. Gill, B. Green, 
L. Pyber, P. Spiga and an anonymous referee for their helpful comments.

\section{Cultural background}\label{subs:hinto}


The problems addressed in this survey first arose and were studied
 within different areas. The purpose of this section is to give a 
glimpse at a broad picture, or rather a series of glimpses from different
perspectives. For the sake of clarity, and to avoid repetitions 
with earlier and later discussions of recent results, we will look
at matters as they stood in different fields right before 
Theorem \ref{thm:main08} was proven, followed quickly by
Theorem \ref{thm:bg} and a series of results by many authors. 

This section is, so to speak, a ``cultural'' one, in that it presents matters quickly,
without any claim to completeness, with the aim of setting a scene; it could
in principle be skipped at first by a reader in a hurry. While it 
does define a key concept -- that of {\em expansion} -- and makes more
precise another one -- the notion of {\em mixing time} --  we will review
the definitions later, before their first technical use. What would be missed
by skipping this section would mainly be motivation, and a sense of the newness
of some of the interactions between the fields presented here as separate units.

\subsection{Additive combinatorics.} This is a relatively recent name
for a field one of whose starting points is the work of Freiman 
(\cite{MR0360496}; see also \cite{MR1139055})
classifying subsets $A$ of $\mathbb{Z}$ such that $A+A$ is not much larger
than $A$. (The strongest bound to date is that of Sanders 
\cite{MR2994508}; we still do not have the strongest conjectured form of
this kind of result.)
 Work by Ruzsa and others (\cite{MR0266892}, \cite{MR2314377}, 
\cite{MR810596}) showed how that, if $|A+A|$ is not much larger than $|A|$,
then the sequence $|A|, |A+A|, |A+A+A|,\dotsc$ grows slowly; we will go
over this in \S \ref{sec:ruzs}.

The use of the sign $+$ and the word {\em additive} both recall
us that additive combinatorics treated abelian groups, even if some of its 
techniques turned out 
to generalize to non-abelian groups rather naturally (see, e.g., 
\cite{MR2501249}). We will take a detailed look at some
of the bases of the subject in \S \ref{sec:arithcomb}.

\subsection{Expanders, spectral gaps and property (T)} Let us start by 
defining expanders. For $A$ a set of generators of a
finite group $G$, we say that the pair $(G,A)$ gives us an 
$\epsilon$-{\em expander} if 
every subset $S$ of $G$ with $|S|\leq |G|/2$ satisfies 
$|S \cup A S|\geq (1+\epsilon) |S|$. An $\epsilon$-expander always
has very small diameter (meaning $\text{diameter}=O((\log |G|)/\epsilon)$) 
and, if the identity $e$ is in $A$, also has
 very small $\ell_2$ and $\ell_\infty$ mixing time.
 
Alternatively, we can define the expander property in terms of the size
of the second smallest eigenvalue $\lambda_1$ of the discrete Laplacian of
the Cayley graph.
The (normalized) adjacency matrix $\mathscr{A}$ of a graph is the operator 
that maps a function $f$ on the set of vertices $V$ of the graph to the
function 
$\mathscr{A} f$
on $V$ whose value at $v$ is the average of $f(w)$ on the neighbors $w$ of
$v$. The {\em discrete Laplacian} is simply $\triangle = I-\mathscr{A}$.
A {\em Cayley graph}  $\Gamma(G,A)$ is the graph having $G$ as
its set of vertices and $\{(g,ag): g\in G, a\in A\}$ as its set
of edges. Assume $A = A^{-1}$; then $\triangle$ is a symmetric operator,
and all its eigenvalues $\lambda_0, \lambda_1,\dotsc$  are thus
real. Clearly, the smallest eigenvalue is $\lambda_0=0$, corresponding
to the constant eigenfunctions. We
say that $\Gamma(G,A)$ is an $\epsilon$-expander if $\lambda\geq \epsilon$
for every eigenvalue $\lambda$ corresponding to a non-constant eigenfunction.
For $|A|$ bounded, this is equivalent to the first definition of
$\epsilon$-expander we gave
 (though the constant 
$\epsilon$ is different in the two sides of the equivalence).

We can order the eigenvalues $0 = \lambda_0\leq \lambda_1\leq \lambda_2\leq \dotsc$.
We call $|\lambda_1-\lambda_0| = \lambda_1$ the {\em spectral gap}. 
We say that a family of graphs is an {\em expander family}, or has a 
spectral gap, if there is an $\epsilon>0$ such that, for every graph $\Gamma$ in the
family, $\Gamma$ is an $\epsilon$-expander.

Of course, the discrete Laplacian is an analogue of the classical Laplacian
$\Delta$, defined on surfaces. The Laplacian $\Delta$ on a surface also has
real, non-negative eigenvalues; we speak of a spectral gap if there is
an $\epsilon>0$ such that
$\lambda>\epsilon$ for every eigenvalue $\lambda$ corresponding to 
a non-constant eigenfunction of $\Delta$. An especially well-researched class of surfaces
is that given by quotients $G\backslash \mathbb{H}$ of the upper half plane 
$\mathbb{H} = \{x+ i y: y>0\}$ by subgroups $G$ of $\SL_2(\mathbb{R})$: 
the group $\SL_2(\mathbb{R})$ acts on
$\mathbb{H}$ by fractional linear transformations, and thus, given a discrete 
subgroup
$G$ of $\SL_2(\mathbb{R})$, we can study the surface $G\backslash \mathbb{H}$,
which is the result of identifying points $x_1$, $x_2$ of $\mathbb{H}$ for
which there is a $g\in G$ such that $g(x_1)=x_2$.  In particular, consider
the discrete subgroup $\Gamma(N)= \{g\in \SL_2(\mathbb{Z}): g\equiv I \mo N\}$.
A central result in the theory of modular
forms (Selberg, \cite{MR0182610}) states that the Laplacian on the surface
$\Gamma(N)\backslash \mathbb{H}$ has a spectral gap.
 It has long been known (see \S \ref{subs:majaro})
that the existence of this spectral gap on the surface
$\Gamma(N)\backslash \mathbb{H}$
implies that the pairs $(\SL_2(\mathbb{Z}/p\mathbb{Z}), A \mo p)$ with
\begin{equation}\label{eq:hosen}
A = \left\{ \left(\begin{array}{cc} 1 &1\\0 &1\end{array}\right),
\left(\begin{array}{cc} 1 &0\\1 &1\end{array}\right)
\right\}\end{equation}
are an expander family, i.e., are $\epsilon$-expanders for some
fixed $\epsilon>0$ as $p$ varies over all primes. Before \cite{Hel08} and \cite{MR2415383}, 
little was known for more general $A$; e.g., for
\begin{equation}\label{eq:pantalon}
A = \left\{ \left(\begin{array}{cc} 1 &3\\0 &1\end{array}\right),
\left(\begin{array}{cc} 1 &0\\3 &1\end{array}\right)
\right\},\end{equation}
there were no good diameter bounds, let alone a proof that
$(\SL_2(\mathbb{Z}/p\mathbb{Z}), A \mo p)$ is a family of expanders.
(This is a favorite example of Lubotzky's.)

For $G=\SL_n(\mathbb{Z}/p\mathbb{Z})$, $n\geq 3$, the proof of expansion
for some $A$ was arguably more direct (due to property (T), for which
relatively elementary proofs were known \cite{MR0209390}), but the case of
general $A$ was open, just as for $n=2$. 

Kassabov applied what was known for $\SL_n$ (and linear 
algebraic groups 
in general) to prove the existence of expanders for the symmetric group
\cite{MR2342639}. Other relevant works are \cite{SarnakXue}
(giving an elementary treatment of expansion for $A$ as in
(\ref{eq:hosen}), and, in general, 
``arithmetic lattices''\footnote{For more on the case of
arithmetic lattices (a case that covers
  (\ref{eq:hosen}) but not (\ref{eq:pantalon})), see the references in 
\cite[\S 1.1]{SGV}}
in $\SL_2$),
\cite{MR1645694}, \cite{MR1691549}, \cite{MR1900698} (strengthening
and generalizing \cite{SarnakXue} to some infinite-index groups), and 
\cite{MR2104475} and \cite{MR2231895} (both of them influenced by
the Solovay-Kitaev algorithm, as in \cite[App.~3]{MR1796805}; see
\cite{Varlat} for recent work along this line). Still, (\ref{eq:pantalon})
remained as a frustrating example of wide classes of generators for which
little was known.

\subsection{Mixing times and diameters.} Let $A$ be a set of generators of
a finite group $G$. Recall that the {\em diameter} of $G$ with respect to $A$
is defined to be 
the least $k$ such that every element $g$ of $G$ can be written as 
$g = x_1 x_2 \dotsc x_r$ for some $x_i\in A$, $r\leq k$; this is the same
as the graph-theoretical diameter of the Cayley graph $\Gamma(G,A)$.
As we said before, Babai's
conjecture \cite[p.~176]{BS88} posits that, if $G$ is simple and non-abelian,
the diameter of $G$ with respect to any set $A$ of generators is small, that is to say, $(\log |G|)^{O(1)}$. (Note that the bound does not depend
on $A$.) For the alternating group $G=\Alt(n)$,
this was a folklore conjecture; work towards it includes \cite{BS88},
\cite{MR1022771} and \cite{BBS04}.

The {\em mixing time} is the least $k$ such that,
when $x_1,\dotsc, x_k$ are taken uniformly and at random within $A$,
the distribution of the product $x_1 \dotsb x_k$ (or, if you prefer,
the outcome of a random walk of length $k$ on the Cayley graph
$\Gamma(G,A)$) is close to uniform in $G$. 
We speak
of {\em $\ell_2$ mixing time}, {\em $\ell_\infty$ mixing time}, etc., depending
on the norm used to define ``close to''. Here most work has focused
on permutation groups, with a strong probabilistic flavor: see 
\cite{BBS04}, \cite{BH}, \cite{MR1208801}, 
\cite{MR626813}, \cite{MR1245303} as well 
as the recent, rather thorough source \cite{MR2466937}, which contains
plenty of references.

\subsection{Group theory: subgroup classification.} 
If $e\in A$, the extreme case $|A\cdot A|=|A|$ happens exactly when
$A$ is a subgroup of $G$. There are results on subgroup
classification from the 80s and 90s intended to bypass parts of 
the Classification of Finite Simple Groups by elementary arguments. The
motivation was clear: the Classification Theorem had -- especially at first
-- an extremely long proof, dispersed in many articles. Thus, alternative
proofs for some of its consequences were in demand.

As we will later see, several of these alternative, relatively elementary
arguments -- in particular, those in  \cite{LP} (circulated in preprint
form since ca.\,1998.), \cite{Bab82} and \cite{Pyb93} -- later
played an important role in the study of growth: their techniques
for studying sets with $|A \cdot A|=|A|$ (that is, subgroups) turned out
to be robust enough to extend to the study of sets for which $|A \cdot A|$
is not much larger than $|A|$.

\subsection{Asymptotic group theory. Model theory.}


As we said in \S \ref{subs:whatisg}, geometric group theory tends to study
the growth of $|A^k|$ for $k\to \infty$, where $A$ is a set of generators of
an infinite group $G$. One of the main results of geometric group theory
is Gromov's theorem \cite{MR623534}, which states that, if $|A^k| \ll k^{O(1)}$,
then $G$ is virtually nilpotent, i.e., $G$ has a nilpotent subgroup of
finite index. In other words, Gromov's theorem classifies groups where $A^k$
grows very slowly (meaning polynomially).

{\em Model theory} is essentially a branch of logic with applications
to algebraic structures.
The work of
Hrushovski and his collaborators, culminating in \cite{MR2833482} 
(see also \cite{MR1329903} and \cite{MR2436141}), used model theory
 to study subgroups of algebraic groups. (As we will see later, this effort
was influenced by the work of Larsen-Pink \cite{LP}, and also served to
elucidate it.) In \cite{MR2833482}, Hrushovski gave a new proof of
Gromov's theorem.

We will later have the chance to discuss (\S \ref{subs:dime}) the 
recent influence of
\cite{MR1329903}, \cite{MR2436141} and \cite{MR2833482} on the area of this
survey, and on the study of growth in finite linear algebraic groups in
particular.

%


\section{Methodological background: arithmetic combinatorics}\label{sec:arithcomb}
The terms ``additive combinatorics'' and ``arithmetic combinatorics'' 
are relatively new. To judge from \cite{MR2289012}, they cover 
at least some of additive number theory and the geometry of numbers.
What may be called the core of additive combinatorics is the study of
the behavior of arbitrary sets under addition (as opposed to, say, 
the primes or $k$th powers). In this sense, the subject originated from
at least two streams, one coursing through work on arithmetic progressions
by Schur, van der Waerden, Roth \cite{MR0051853}, Szemer\'edi
\cite{MR0245555}, Furstenberg \cite{MR0498471} (leading to the ergodic work
of Host-Kra \cite{MR2150389} and Ziegler \cite{MR2257397}; see also
Szegedy \cite{Szege}),  
Gowers \cite{MR1844079}, and Green-Tao \cite{MR2415379}, among many others,
 and 
another one based on the study of growth in abelian groups, starting with Freiman 
\cite{MR0360496}, Erd\H{o}s-Szemer\'edi \cite{MR820223} and Ruzsa
\cite{MR1281447}, \cite{MR1701207} and continuing in \cite{MR2166359}, 
\cite{MR1909605}, \cite{MR2994508}. There
has also been a vein of a more geometrical flavor (e.g., \cite{MR729791}). 

The use of the term {\em arithmetic combinatorics} instead of {\em additive
combinatorics} emphasizes results on growth that do not require commutativity,
as well as results on fields and rings 
(the {\em sum-product theorem}, \S \ref{sec:sump}).

The reader who is playing hopscotch may jump ahead to section \S 
\ref{sec:sojor}, and thus miss the following material: the reason why 
Theorem \ref{thm:main08} is and can be stated as a lower bound on $|A^3|$ 
(\S \ref{sec:ruzs}), the connection of such a bound
to the definition of {\em approximate
subgroups} (also \S \ref{sec:ruzs}), the Balog-Szemer\'edi-Gowers theorem
 and its use by Bourgain and Gamburd 
to establish an equivalence between the growth of sets and the flattening
of measures (\S \ref{prop:flatlem}), and the relation between the sum-product
theorem and growth in the affine group (\S \ref{sec:okto}). In 
\S \ref{sec:solnil}, we will take a brief look at solvable and nilpotent groups
in general. 

Arithmetic combinatorics is important for the history
of the subject, and also provides part of its {\em raison d'\^{e}tre}; the study
of growth in non-commutative groups, after all, could be said to be just the
non-commutative case of arithmetic combinatorics. There is more to arithmetic
combinatorics and to growth in groups than that, but there is still some truth
to the statement, particularly in view of the genesis of the material we
aim to describe.

\subsection{Triple products and approximate subgroups}\label{sec:ruzs}
Some of additive combinatorics can be described as the study of {\em sets
that grow slowly}. In abelian groups, results are often stated so as
to classify sets $A$ such that $|A^2|$ is not much larger than $|A|$;
in non-abelian groups, works starting with \cite{Hel08} classify sets
$A$ such that $|A^3|$ is not much larger than $|A|$. 

There is a reason for this difference in conventions. In an abelian
group, if $|A^2| < K |A|$, then $|A^k| < K^{O(k)} |A|$ -- i.e., if a set
does not grow after one multiplication with itself, it will not grow
under many. This is a result of Pl\"unnecke \cite{MR0266892} and Ruzsa \cite{MR2314377}.
(Petridis \cite{MR3063158} recently gave a purely additive-combinatorial
proof.)
In a non-abelian group $G$, there can be sets $A$ breaking this rule:
for example, if $H\triangleleft G$, $g\in G\setminus H$ and $A = H
\cup \{g\}$, then $|A^2| < 3 |A|$, but $A^3\supset H g H$, and
$H g H$ can be much larger than $A$. (For instance, if $H$ is the subgroup
of $G = \SL_2(\mathbb{Z}/p\mathbb{Z})$ leaving a basis vector $e_1$ fixed, and
$w$ is the element of $G$ taking $e_1$ to $e_2$ and $e_2$ to $-e_1$, then
$H w H$ is of size $|H|^2$. We will later see (proof of Prop.~\ref{prop:port})
that this is not an isolated example --  
it can be quite useful to stick a subgroup $H$ in different directions (so to speak) in order 
to get a large product.)

However, Ruzsa's ideas do carry over to the non-abelian case, as was
pointed out in \cite{Hel08} and \cite{MR2501249}; in fact, \cite{MR810596}
carries over without change, since the assumption that $G$ is abelian
is never really used. We must 
assume that $|A^3|$ is small, not just $|A^2|$, and then it does follow
that $|A^k|$ is small.

\begin{lem}[Ruzsa triangle inequality]\label{lem:schatte}
Let $A$, $B$ and $C$ be finite subsets of a group $G$. Then
\begin{equation}\label{eq:eolt}
|A C^{-1}| |B| \leq |A B^{-1}| |B C^{-1}| .\end{equation}
\end{lem}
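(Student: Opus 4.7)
The plan is to prove Lemma~\ref{lem:schatte} by constructing an explicit injection
\[
\phi: (A C^{-1}) \times B \hookrightarrow (A B^{-1}) \times (B C^{-1}),
\]
from which the cardinality inequality (\ref{eq:eolt}) follows immediately. This is the standard Ruzsa-style ``double counting via fixed decompositions,'' and the only substantive idea is a good choice of $\phi$; once it is written down, verification is a one-line computation.

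Concretely, I would proceed as follows. For every element $d \in A C^{-1}$, fix once and for all a decomposition $d = \alpha(d)\, \gamma(d)^{-1}$ with $\alpha(d) \in A$ and $\gamma(d) \in C$ (such a decomposition exists by the definition of $A C^{-1}$; the axiom of choice, or just a well-ordering of $A \times C$, is enough to pick one). Then define
\[
\phi(d,b) \;=\; \bigl(\alpha(d)\, b^{-1},\; b\, \gamma(d)^{-1}\bigr).
\]
The first coordinate lies in $A B^{-1}$ and the second in $B C^{-1}$, so $\phi$ indeed maps into the right-hand product set.

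The key point is injectivity of $\phi$. Given a pair $(x,y) = \phi(d,b)$ in the image, observe that
\[
x y \;=\; \alpha(d)\, b^{-1} \cdot b\, \gamma(d)^{-1} \;=\; \alpha(d)\, \gamma(d)^{-1} \;=\; d,
\]
so $d$ is determined by $(x,y)$. Once $d$ is known, the chosen values $\alpha(d)$ and $\gamma(d)$ are determined, and then $b = \alpha(d)^{-1}\, x \cdot \text{(}\cdot\text{)}^{-1}$ is recovered from either coordinate, e.g.\ from $x = \alpha(d)\, b^{-1}$ one reads off $b = x^{-1}\, \alpha(d)$. Hence $\phi$ is injective, and taking cardinalities gives exactly $|A C^{-1}|\,|B| \leq |A B^{-1}|\,|B C^{-1}|$.

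There is no real obstacle here; the argument is purely combinatorial and does not use commutativity anywhere, which is precisely why (as the text notes just above) Ruzsa-type machinery transfers unchanged to the non-abelian setting. The only mildly non-obvious step is realizing that one should \emph{fix} a single decomposition $d = \alpha(d)\gamma(d)^{-1}$ for each $d$ — the inequality would fail to drop out if one instead tried to sum over all decompositions, since then the map would overcount rather than inject. Once this device is in place, the lemma is immediate.
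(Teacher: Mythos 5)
Your proof is correct and is identical to the paper's own argument: both fix a decomposition $d = \alpha(d)\gamma(d)^{-1}$ for each $d \in AC^{-1}$, define the same injection $(d,b)\mapsto(\alpha(d)b^{-1},\,b\gamma(d)^{-1})$, and recover $d$ (hence $\alpha(d),\gamma(d)$, hence $b$) by multiplying the two coordinates. The only blemish is the garbled expression ``$b = \alpha(d)^{-1}x\cdot(\cdot)^{-1}$'' before the correct recovery $b = x^{-1}\alpha(d)$, which should simply be deleted.
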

Commutativity is not needed. In fact, what is being used is in some
sense more basic than a group structure; as shown in \cite{GHR}, the same
argument works naturally in any abstract projective plane endowed with the
little Desargues axiom.
\begin{proof}
We will construct an injection
$\iota:A C^{-1} \times B \hookrightarrow A B^{-1} \times
 B C^{-1}$. For every 
$d\in A C^{-1}$, choose $(f_1(d),f_2(d)) = (a,c)\in A\times C$ such that
$d = a c^{-1}$. Define $\iota(d,b) = (f_1(d) b^{-1}, b (f_2(d))^{-1})$.
We can recover $d = f_1(d) (f_2(d))^{-1}$ from $\iota(d,b)$; hence
we can recover $(f_1,f_2)(d)=(a,c)$, and thus $b$ as well. Therefore,
 $\iota$ is an injection.
\end{proof}
It follows easily that 
\begin{equation}\label{eq:mony}
\frac {|(A \cup A^{-1} \cup \{e\})^3|}{|A|} \leq \left(3\frac{|A^3|}{|A|}\right)^3\end{equation}
for any finite subset $A$ of any group $G$, and, moreover,
\begin{equation}\label{eq:jotor}
\frac{|A^k|}{|A|} \leq \left(\frac{|A^3|}{|A|}\right)^{k-2}\end{equation}
for any $A\subset G$ such that $A = A^{-1}$ (i.e., $A$ contains
the inverse of every element in $A$). (Both of these statements go back
to Ruzsa (or Ruzsa-Turj\'anyi \cite{MR810596}), 
at least for $G$ abelian.) 
Again, this is the sort of statement for which it is nicer to give a
proof oneself than to read another person's proof. Let us do one case:
 $|A A^{-1} A| |A| = 
|A A^{-1} A| |A^{-1}|
\leq |A^2| |A^{-1} A^{-1} A|$ (by Lemma \ref{lem:schatte} with
$B=A^{-1}$ and $C=A^{-1} A$) and $|A^{-1} A^{-1} A| |A|
\leq |A^{-1} A^{-1} A^{-1}| |A^2| = |A^3| |A^2|$ (again by Lemma 
\ref{lem:schatte}), implying 
$|A A^{-1} A|/|A| \leq |A^2|^2 |A^3|/|A|^3 \leq (|A^3|/|A|)^3$. The 
rest of (\ref{eq:mony}) and (\ref{eq:jotor}) is left as an exercise.

Because of (\ref{eq:mony}) and (\ref{eq:jotor}), from now on,
we can generally focus on studying when $|A^3|$ is or isn't much larger
than $|A|$, assuming, without any essential loss of generality, that $A=A^{-1}$
and $e\in A$. Obviously, we can apply
(\ref{eq:jotor}) to $A \cup A^{-1} \cup \{e\}$ after applying
(\ref{eq:mony}). 

The paper \cite{MR2501249} focused on translating several results
from additive combinatorics to the non-abelian context.
In the course of this task, Tao defined
what he called an {\em approximate group}. ({\em Approximate subgroup}
might be more suggestive, as will become clear in \S \ref{sec:sojor}.)
A $K$-approximate subgroup of a group $G$ is a set $A\subset G$ such that
\begin{enumerate}
\item\label{eq:pennya} $A=A^{-1}$ and $e\in A$,
\item\label{eq:pennyb} there is a subset $X\subset G$ such that
$|X|\leq K$ and $A^2\subset X\cdot A$.
\end{enumerate}
This is essentially equivalent to the notion of a {\em slowly growing set}
$A$ (or {\em set of small tripling}), that is a set for which
 $|A^3|\leq K' |A|$: a $K$-approximate group
is a slowly growing set (trivially, with $K'=K^2$) and, for a slowly
growing set $A$ with $A = A^{-1}$ and $e\in A$, the set $A^3$ is 
a $K^{O(1)}$-approximate subgroup;
this was shown by Tao \cite[Cor. 3.11]{MR2501249}, with the
essential ingredient being the Ruzsa covering lemma (\cite{MR1701207}).

\begin{lem}[Ruzsa covering lemma]\label{lem:ruzsco}
Let $A$ and $B$ be finite subsets of a group $G$. Assume
$|A\cdot B|\leq K |B|$. Then there is a subset
$X\subset A$ with $|X|\leq K$ such that $A\subset X\cdot B\cdot B^{-1}$.
\end{lem}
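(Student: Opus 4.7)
The plan is to use a greedy maximality argument of the sort that underlies many covering lemmas in additive combinatorics. Concretely, I would choose $X \subset A$ to be a \emph{maximal} subset with the property that the left translates $\{x \cdot B : x \in X\}$ are pairwise disjoint. Such a maximal $X$ exists because $A$ is finite (one can build it by adding elements of $A$ one at a time for as long as the disjointness condition can be preserved).

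Once $X$ is chosen, the conclusion splits into two independent bounds. For the size bound, the disjointness of the translates $x \cdot B$ for $x \in X$ gives
\[
|X|\cdot |B| \;=\; \Bigl|\bigcup_{x\in X} x\cdot B\Bigr| \;\leq\; |A\cdot B| \;\leq\; K|B|,
\]
and dividing by $|B|$ (which is positive, else the statement is vacuous) yields $|X| \leq K$. For the covering statement, take any $a \in A$. If $a \in X$, then trivially $a \in X \cdot B \cdot B^{-1}$ since $e \in B\cdot B^{-1}$. Otherwise, by the maximality of $X$, adjoining $a$ to $X$ must destroy the disjointness, so there exists $x \in X$ with $a\cdot B \cap x \cdot B \neq \emptyset$. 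Picking $b_1, b_2 \in B$ with $a b_1 = x b_2$ gives $a = x b_2 b_1^{-1} \in x \cdot B \cdot B^{-1} \subset X \cdot B \cdot B^{-1}$.

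Combining the two bounds finishes the proof. There is no real obstacle here: the argument is almost entirely formal, and commutativity of $G$ plays no role (which is important for the non-abelian applications later, e.g.\ in deducing that $A^3$ is an approximate group when $|A A A| \leq K|A|$). The only mildly delicate point is the use of the hypothesis in the correct ``direction'': one needs $|A\cdot B| \leq K|B|$ (not $|B\cdot A|$) for the disjoint translates $x\cdot B$, $x\in X \subset A$, to sit inside $A\cdot B$; and one ends up with $B\cdot B^{-1}$ on the right (rather than $B^{-1}\cdot B$) precisely because one compares two translates $a b_1 = x b_2$ to solve for $a$.
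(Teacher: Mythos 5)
Your proof is correct and follows essentially the same greedy maximality argument as the paper: pick a maximal subset of $A$ whose left translates of $B$ are pairwise disjoint, bound its size by disjointness inside $A\cdot B$, and use maximality to cover the rest of $A$ by $X\cdot B\cdot B^{-1}$. The only cosmetic difference is that you treat the case $a \in X$ separately, whereas the paper absorbs it into the ``some translate meets $aB$'' step; both are fine.
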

\begin{proof}
Let $\{a_1, a_2, \dotsc, a_k\}$ be a maximal subset of $A$ with the
property that the cosets $a_j B$, $1\leq j\leq k$, are all disjoint.
It is clear that $k\leq |A\cdot B|/|B|\leq K$. Let $x\in A$. Since $\{a_1,
a_2, \dotsc, a_k\}$ is maximal, there is a $j$ such that $a_j B \cap
x B$ is non-empty. Then $x\in a_j B B^{-1}$. Thus, 
the sets $a_j B B^{-1}$ cover $A$.
\end{proof}

Tao also showed 
that one can classify sets $A$ of small {\em doubling} in terms of
approximate subgroups, using the covering lemma as one of the main tools:

\begin{lem}{\rm \cite[Cor. 4.7]{MR2501249}}\label{lem:corot}
Let $A$ be a finite subset of a group $G$. If $|A \cdot A|\leq K |A|$ or
$|A \cdot A^{-1}|\leq K |A|$, then $A$ lies in 
the union of at most $O(K^{O(1)})$ cosets of an $O(K^{O(1)})$ approximate
subgroup $H$ of size $|H|\ll K^{O(1)} |A|$.
\end{lem}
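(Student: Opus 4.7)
The plan is to normalize the hypothesis, extract a structured subset $A_0\subset A$ with small tripling via a non-commutative Balog-Szemer\'edi-Gowers (BSG) argument, apply Tao's Cor.~3.11 (cited in the excerpt just before Lemma \ref{lem:ruzsco}) to produce an approximate subgroup $H$, and then cover $A$ by few translates of $H$ using the Ruzsa covering lemma (Lemma \ref{lem:ruzsco}). First I would normalize. If $|A\cdot A|\leq K|A|$, the Ruzsa triangle inequality (Lemma \ref{lem:schatte}) with $\mathscr{A}=A$, $\mathscr{B}=A^{-1}$, $\mathscr{C}=A$ gives
\[|AA^{-1}|\cdot|A^{-1}|\leq|AA|\cdot|A^{-1}A^{-1}|=|AA|^2,\]
whence $|AA^{-1}|\leq K^2|A|$. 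So after replacing $K$ by $K^{O(1)}$ if necessary, I may assume $|AA^{-1}|\leq K|A|$ throughout.

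Next I would extract a dense subset of $A$ with small tripling. From $|AA^{-1}|\leq K|A|$, Cauchy-Schwarz yields that the multiplicative energy $E(A,A)=\sum_x|A\cap xA|^2$ is at least $|A|^4/|AA^{-1}|\geq|A|^3/K$, so $A$ has many quadruples $(a_1,a_2,b_1,b_2)\in A^4$ with $a_1b_1^{-1}=a_2b_2^{-1}$. A non-commutative version of the Balog-Szemer\'edi-Gowers theorem (see \cite{MR2501249}) then furnishes a set $A_0\subset A$ with $|A_0|\geq|A|/K^{O(1)}$ and $|A_0^3|\leq K^{O(1)}|A_0|$. Writing $\widetilde A_0:=A_0\cup A_0^{-1}\cup\{e\}$, the inequality (\ref{eq:mony}) of the excerpt gives $|\widetilde A_0^3|\leq K^{O(1)}|\widetilde A_0|$, so by Tao's Cor.~3.11 the set $H:=\widetilde A_0^3$ is an $O(K^{O(1)})$-approximate subgroup of size $|H|\leq K^{O(1)}|A|$.

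Finally, to cover $A$ by translates of $H$, observe that since $A_0\subset A$ one has $A\cdot A_0^{-1}\subset AA^{-1}$, hence $|A\cdot A_0^{-1}|\leq K|A|\leq K^{O(1)}|A_0|$. The Ruzsa covering lemma, applied with first argument $A$ and second argument $A_0^{-1}$, then produces $X\subset A$ with $|X|\leq K^{O(1)}$ such that
\[A\subset X\cdot A_0^{-1}\cdot A_0\subset X\cdot\widetilde A_0^{2}\subset X\cdot H,\]
completing the proof. The hard step will be the BSG extraction: in the non-commutative setting small doubling does \emph{not} imply small tripling, as the example $A=H\cup\{g\}$ with $H$ a non-normal subgroup shows that $|AA|/|A|$ can be $O(1)$ while $|AAA|/|A|$ is arbitrarily large. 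Consequently one cannot simply take $H=AA^{-1}$ or $H=A^3$; obtaining a large subset $A_0$ with $|A_0^3|\leq K^{O(1)}|A_0|$ genuinely requires the combinatorial extraction step, after which everything else is routine Ruzsa-style calculus.
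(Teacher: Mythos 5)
Your outline — normalize, extract dense structure via BSG, build $H$ from it, cover with Ruzsa — is in fact the right skeleton, and it is essentially Tao's. The normalization (Step 1), the identification of the Ruzsa covering lemma as the finishing tool, and the application of Cor.~3.11 once a set with small tripling is in hand are all correct. (One small slip: $\sum_x |A\cap xA|^2$ equals $E(A,A^{-1})$, not $E(A,A)$, since $(1_A\ast 1_A)(g)=|A\cap gA^{-1}|$; this is harmless, one just feeds the pair $(A,A^{-1})$ into BSG.)

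The genuine gap is the assertion that non-commutative BSG ``furnishes a set $A_0\subset A$ with $|A_0|\geq|A|/K^{O(1)}$ and $|A_0^3|\leq K^{O(1)}|A_0|$.'' As stated, Prop.~\ref{prop:bsg} returns two sets $A',B'$ with $|A'\cdot B'|$ small; that is a one-sided product bound, and in a non-abelian group it simply does not imply small tripling of any dense subset of $A$ — the very obstruction you point out for $AA^{-1}$ and $A^3$ bites here too. Worse, the claimed intermediate statement is \emph{false}, not merely unproved. Take $H\leq G$ a subgroup and $x\in G$ with $H\cap x^{-1}Hx=\{e\}$, and set $A:=xH$. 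Then $|AA^{-1}|=|xHx^{-1}|=|H|=|A|$, so $K=1$. The lemma's conclusion holds (one coset of $H$), but for \emph{any} $A_0\subset A$ write $A_0=xD$ with $D\subset H$: all products $d\,x\,d'$ ($d,d'\in D$) are distinct because $d_1xd_1'=d_2xd_2'$ forces $d_2^{-1}d_1=xd_2'(d_1')^{-1}x^{-1}\in H\cap xHx^{-1}=\{e\}$, so $|A_0^2|=|D|^2=|A_0|^2$ and a fortiori $|A_0^3|\geq|A_0|^2$. No dense $A_0$ has small tripling, and the proposed $H:=\widetilde{A_0}^3$ has size on the order of $|H|^2$, not $K^{O(1)}|A|$. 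So Step 3 cannot be salvaged as written.

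What Tao actually does (Thm.~4.6 and its supporting Prop.~4.5 in \cite{MR2501249}) is pass \emph{directly} from the one-sided BSG output to an approximate group, without going through tripling of a subset of $A$: roughly, the approximate group is built from the two-sided pieces $A'^{-1}A'$, $B'B'^{-1}$ and their interactions, with the order of the factors chosen so that the Ruzsa triangle and covering lemmas actually close up — note that in the example above $A^{-1}A=H$ \emph{is} small and is the right object, while $A_0^3\subset xHxHxH$ is not. So the content you are outsourcing to ``BSG'' is really the whole of Tao's \S 4, and it is genuinely more delicate than ``routine Ruzsa-style calculus.''
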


We will now see that the notion of sets of small growth, besides being 
essentially equivalent
to that of approximate subgroups, are also closely connected to another 
growth-related concept, this one based on measures.

\subsection{Balog-Szemer\'edi-Gowers. Flattening lemma (Bourgain-Gamburd).}
The Balog-Szemer\'edi-Gowers theorem is a key tool in modern additive 
combinatorics. As we shall see, it is also important in the study of growth
on groups, since it allows one to pass from results on sets to results on
measures.
The first version of the theorem was due to Balog and
Szemer\'edi \cite{MR1305895}. 
Gowers \cite[Prop. 12]{MR1631259} improved the bounds dramatically,
making all dependencies polynomial; this is needed for our applications.
Then Tao showed that the proof (which is essentially graph-theoretical,
as shown in \cite{MR2155059})
also works in a non-commutative setting \cite[\S 5]{MR2501249}.

First, we need a definition. Its commutative counterpart, the {\em additive energy}, is very common in additive combinatorics.
\begin{defn}
Let $G$ be a group. Let $A,B\subset G$ be finite sets. The 
{\em multiplicative energy} $E(A,B)$ is 
\[E(A,B) = \sum_{g\in G} |(1_A\ast 1_B)(g)|^2 = 
|\{(a_1,a_2,b_1,b_2)\in A\times A\times B\times B: a_1 b_1 = a_2 b_2\}|.\]
\end{defn}
Here, as usual, the convolution 
$f\ast g$ is defined by \[(f\ast g) = \sum_y f(y) g(y^{-1} x).\]
Clearly, $E(A,B)\leq \min(|A|^2 |B|, |A| |B|^2)$.
By Cauchy-Schwarz, we also have
\begin{equation}\label{eq:gocs}
E(A,B) \cdot |A B|\geq \left|1_A \ast 1_B\right|_1^2 = |A|^2 |B|^2.\end{equation}
In other words, if $|A B|$ is not much larger than $|A|$ or $|B|$, then $E(A,B)$
is large. It turns out that, while the na\"ive converse does not hold (in that, even if $E(A,B)$ is large, $|A B|$ can be much larger than $|A|$ and $|B|$), a converse statement of sorts is true.
\begin{prop}[Non-commutative  Balog-Szemer\'edi-Gowers
\cite{MR2501249}]\label{prop:bsg}
Let $G$ be a group. Let $A,B\subset G$ be finite. Suppose that $E(A,B)
\geq |A|^{3/2} |B|^{3/2}/K$. Then there are $A'\subset A$, $B'\subset B$ such
that $|A'|\gg |A|/K$, $|B'|\gg |B|/K$ and $|A'\cdot B'|\ll K^8
\sqrt{|A| |B|}$, where the implied constants are absolute.
\end{prop}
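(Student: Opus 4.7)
The plan is to combine a popularity argument with a graph-theoretic counting lemma of Gowers, following Tao's non-commutative adaptation. Let $r(g) = |\{(a,b)\in A\times B : ab=g\}|$, so $\sum_g r(g) = |A||B|$ and $\sum_g r(g)^2 = E(A,B) \geq |A|^{3/2}|B|^{3/2}/K$. A dyadic pigeonhole isolates a set of ``popular products'' $S\subset AB$ on which $r(g) \gtrsim \sqrt{|A||B|}/K$ and which carries a positive-proportion share of the energy. From the lower bound on $r$ on $S$ and the trivial bound $\sum_{g\in S} r(g) \leq |A||B|$ one gets $|S| \lesssim K\sqrt{|A||B|}$.

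Next, form the bipartite graph $\Gamma\subset A\times B$ with edges $\{(a,b): ab\in S\}$. Its edge count is $|\Gamma| = \sum_{g\in S} r(g) \gtrsim |A||B|/K$, so $\Gamma$ has density $\gtrsim 1/K$. The graph-theoretic heart of BSG then produces large subsets $A'\subset A$, $B'\subset B$ with $|A'|\gg |A|/K$ and $|B'|\gg |B|/K$ such that for \emph{every} $(a',b')\in A'\times B'$, the number of length-three paths $a' - b_1 - a_2 - b'$ in $\Gamma$ (i.e.\ triples $(b_1,a_2)\in B\times A$ with $(a',b_1),(a_2,b_1),(a_2,b')\in \Gamma$) is at least $|A||B|/K^c$ for an absolute $c$. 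I would derive this in two passes: first Cauchy--Schwarz on the degree sequence to show most pairs $(a_1,a_2)\in A^2$ have many common $B$-neighbours, then a refinement that throws out the offending ``bad'' vertices from $A'$ and $B'$ to upgrade ``most pairs'' to ``all pairs,'' paying only polynomial factors in $K$.

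The payoff is the algebraic identity
\[
a' b' \;=\; (a' b_1)\,(a_2 b_1)^{-1}\,(a_2 b'),
\]
in which, by construction, each of the three factors lies in $S$. Hence $A'\cdot B' \subset S\cdot S^{-1}\cdot S$, and moreover every element of $A'\cdot B'$ has at least $\gtrsim |A||B|/K^c$ representations as an ordered triple from $S\times S\times S$. Double counting gives
\[
|A'\cdot B'| \cdot \frac{|A||B|}{K^c} \;\leq\; |S|^3 \;\lesssim\; K^3 (|A||B|)^{3/2},
\]
so $|A'\cdot B'| \lesssim K^{c+3}\sqrt{|A||B|}$; tracking the constants in the graph lemma produces the stated $K^8$.

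The main obstacle is the graph-theoretic step: obtaining every-pair control (as opposed to mere average control) with losses only polynomial in $K$ is exactly Gowers's improvement over the original Balog--Szemerédi argument, and it must be executed carefully here because in the non-commutative setting the factor order in $(a'b_1)(a_2 b_1)^{-1}(a_2 b')$ is rigid: one cannot reshuffle, so the path structure in $\Gamma$ must be respected as an \emph{ordered} walk $A\to B\to A\to B$. Once that is done, commutativity is never used, which is precisely the point of Tao's adaptation.
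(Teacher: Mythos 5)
The paper does not prove Proposition~\ref{prop:bsg}; it states it and cites Tao \cite[\S 5]{MR2501249}, which adapts Gowers's polynomial-loss version of Balog--Szemer\'edi. Your outline faithfully reconstructs that cited argument: the threshold step giving a popular-product set $S$ with $|S|\ll K\sqrt{|A||B|}$, the bipartite graph $\Gamma$ of density $\gg 1/K$, the paths-of-length-three lemma, the identity
\[
a'b' = (a'b_1)\,(a_2 b_1)^{-1}\,(a_2 b'),
\]
which uses only associativity and inverses (never commutativity), and the double count $|A'B'|\cdot |A||B|/K^{c} \leq |S|^3 \ll K^3(|A||B|)^{3/2}$ are exactly the components of the proof in that reference; your arithmetic $c+3=8$ is also internally consistent with the claimed exponent. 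The one step that genuinely needs more than a sketch, and which you correctly flag as the crux, is the graph-theoretic lemma converting the Cauchy--Schwarz ``most pairs of $A$-vertices have many common $B$-neighbours'' statement into a product set $A'\times B'$ on which \emph{every} pair is joined by $\gg |A||B|/K^{O(1)}$ paths of length three with only polynomial loss in $K$; this is Gowers's key improvement over Balog--Szemer\'edi, and it is where the final exponent is actually determined. Your two-pass description of it (Cauchy--Schwarz, then pruning bad vertices) is accurate as a blueprint, but in a complete write-up that lemma must be executed in full, since all the quantitative content of the proposition lives there.
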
 

The Balog-Szemer\'edi-Gowers theorem (for $G$ commutative) already
played a minor role in \cite{Hel08}; \cite{BGSU2} would later show 
this was not necessary. What concerns us most here is its use
for $G$ non-commutative in \cite{MR2415383}: Bourgain and Gamburd
showed how to use Prop.~\ref{prop:bsg} to reduce a statement on the
``flattening'' of measures to a statement about the growth of sets
(namely, Thm.~\ref{thm:main08}).

Again, this will be some sort of converse to an easier statement. The easier
statement here would be that, if $A$ is a set satisfying $|A^2|\leq K|A|$,
then  $|\mu\ast \mu|_2 \geq K^{-1} |\mu|_2$ for $\mu = (1/|A|) \mu_A$.
That is an exercise. (Hint: write $|\mu_A\ast \mu_A|_2^2 = E(A,A)/|A|^4$ and use (\ref{eq:gocs})
to bound $E(A,A)$.)

\begin{prop}[\cite{MR2415383}, ``flattening lemma'']\label{prop:flatlem}
Let $G$ be a finite group. Let $\mu$ be a probability measure on $G$
with $\mu(g) = \mu(g^{-1})$ for all $g\in G$. Suppose
that
\begin{equation}\label{eq:gorto}
|\mu\ast \mu|_2 \geq K^{-1} |\mu|_2\end{equation}
for some $K>0$.
Then there is a $K^{O(1)}$-approximate subgroup $H\subset G$ of size
$\ll K^{O(1)}/|\mu|_2^2$ and an element $g\in G$ such that
$\mu(H g) \gg K^{-O(1)}$. (The implied constants are absolute.)
\end{prop}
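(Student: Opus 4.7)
The plan is to convert the $\ell_2$ hypothesis into a statement about the multiplicative energy of a set, apply the non-commutative Balog--Szemer\'edi--Gowers theorem (Prop.~\ref{prop:bsg}) together with Pl\"unnecke--Ruzsa-type manipulations to extract a set of small tripling, and finally invoke Tao's classification of sets of small tripling (Lemma~\ref{lem:corot}) to exhibit the approximate subgroup.

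\emph{Step 1 (from measure to set).} Expanding,
\[|\mu \ast \mu|_2^2 = \sum_{a_1 b_1 = a_2 b_2} \mu(a_1)\mu(b_1)\mu(a_2)\mu(b_2) \geq K^{-2}|\mu|_2^2,\]
a $\mu$-weighted analogue of multiplicative energy. Decompose $\mu$ into dyadic level sets $A_j = \{g : 2^{-j-1} < \mu(g) \leq 2^{-j}\}$, on each of which $\mu$ is essentially uniform, and pigeonhole among the relevant indices $j$ to isolate a single set $A$ with $|A| \asymp 1/|\mu|_2^2$, $\mu(A) \gg K^{-O(1)}$, and $E(A, A) \gg K^{-O(1)}|A|^3$. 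The symmetry of $\mu$ ensures $A = A^{-1}$.

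\emph{Step 2 (energy to tripling).} Apply Prop.~\ref{prop:bsg} with $B = A$ to obtain $A', A'' \subseteq A$ with $|A'|, |A''| \gg K^{-O(1)}|A|$ and $|A' \cdot A''| \ll K^{O(1)}|A|$. Combining the Ruzsa triangle inequality (Lemma~\ref{lem:schatte}) with the Ruzsa covering lemma (Lemma~\ref{lem:ruzsco}), in the style of the non-commutative product-set estimates of \cite{MR2501249}, one extracts from $A'$ a large subset $A^*$ of size $\gg K^{-O(1)}|A|$ with $|A^* \cdot A^* \cdot A^*| \ll K^{O(1)}|A^*|$.

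\emph{Step 3 (tripling to approximate subgroup).} Apply Lemma~\ref{lem:corot} to $A^*$, via the equivalence between small tripling and the approximate-subgroup property noted in \S\ref{sec:ruzs}, to produce a $K^{O(1)}$-approximate subgroup $H \subseteq G$ with $|H| \ll K^{O(1)}|A^*| \ll K^{O(1)}/|\mu|_2^2$, such that $A^*$ is covered by at most $K^{O(1)}$ cosets of $H$. Since $A^* \subset A$, $|A^*| \gg K^{-O(1)}|A|$, and $\mu \asymp 1/|A|$ on $A$, we have $\mu(A^*) \gg K^{-O(1)}$; pigeonholing over the $K^{O(1)}$ cosets yields a coset $Hg$ with $\mu(Hg) \gg K^{-O(1)}$, as required.

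The main obstacle is Step 1: naive pigeonholing over level sets of $\mu$ costs a factor polylogarithmic in $1/|\mu|_2^2$, which is not of the form $K^{O(1)}$. The resolution is that the near-saturation of the trivial bound $|\mu \ast \mu|_2 \leq |\mu|_2$ (itself a consequence of Cauchy--Schwarz applied to $(\mu\ast\mu)(g) = \sum_y \mu(y)\mu(y^{-1}g)$) forces the $\ell_2$ mass of $\mu$ to concentrate on levels within a $K^{O(1)}$ factor of each other, bounding the number of relevant levels by $O(\log K)$. A secondary technical point is that the passage from the BSG output $|A' \cdot A''|$ small to genuine tripling is more delicate in the non-commutative setting than in the abelian one; a single application of Lemma~\ref{lem:schatte} does not suffice, and one must iterate it carefully (and use the covering lemma) to control $|A^* A^* A^*|$ without invoking a clean Pl\"unnecke--Ruzsa inequality, which is not available non-commutatively.
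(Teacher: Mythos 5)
Your overall strategy (BSG~$\to$ Ruzsa-type manipulations~$\to$ Tao's classification of small doubling) matches the paper's, so the substance of Steps~2 and~3 is on the right track; but the reduction to the characteristic-function case in Step~1 is where the real work lies, and your version of it contains a genuine gap. You correctly identify the difficulty -- naive pigeonholing over all dyadic levels of $\mu$ costs a factor polynomial in $\log(1/|\mu|_2^2)$, which is not $K^{O(1)}$ -- and you correctly intuit that (\ref{eq:gorto}) should force the $\ell_2$ mass to concentrate on a window of width $K^{O(1)}$. But you do not actually prove this concentration; you state it as ``the resolution.'' Moreover, even granting it, the dyadic pigeonhole does not cleanly land you on a single set with large self-energy: expanding $|\mu\ast\mu|_2^2 = \sum_{j_1,\dots,j_4}\langle \nu_{j_1}\ast\nu_{j_2},\,\nu_{j_3}\ast\nu_{j_4}\rangle$ over four level indices, the dominant term is a quadrilinear expression in up to four distinct level sets, and getting from there to $E(A,A)\gg K^{-O(1)}|A|^3$ for a \emph{single} $A$ needs an extra argument (Cauchy--Schwarz to reduce to diagonal terms, or a further pigeonhole).

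The paper sidesteps both problems at once by a truncation rather than a dyadic decomposition. Set $a=|\mu|_2^2$ and split $\mu = \mu_< + \mu_\sim + \mu_>$ at thresholds $a/(CK^c)$ and $CK^c a$. Young's inequality ($|f\ast g|_2\le |f|_1|g|_2$) shows $|\mu\ast\mu_<|_2$ and $|\mu_>\ast\mu|_2$ are both $\le \sqrt{a}/\sqrt{CK^c}$, so after choosing $C,c$ the middle piece inherits $|\mu_\sim\ast\mu_\sim|_2 \ge \tfrac15 K^{-1}\sqrt{a}$. One then takes $A$ to be the \emph{whole} set where $\mu(g)\ge a/(CK^c)$ (not a single dyadic slice), observes that $\mu_A := |A|^{-1}1_A$ and $\mu_\sim$ are pointwise comparable up to a $K^{O(1)}$ factor, and transfers the bound to $|\mu_A\ast\mu_A|_2 \gg K^{-O(1)}|\mu_A|_2$. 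This is precisely the concentration statement you asserted, made quantitative, and it avoids the quadrilinear bookkeeping entirely. Your Step~2 also contains a small red herring: you need not chase small \emph{tripling}. The BSG output $|A_1'A_2'|\ll K^{O(1)}\sqrt{|A_1'||A_2'|}$ yields small \emph{doubling} $|A_1'A_1'|\ll K^{O(1)}|A_1'|$ by one application of Lemma~\ref{lem:schatte}, and Lemma~\ref{lem:corot} is stated for small doubling; the covering lemma and the passage to $A^3$ are internal to \cite{MR2501249}'s proof of that lemma, not something you need to redo here.
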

Note that $\mu(H g)\gg K^{-O(1)}$ implies $(\mu\ast \mu)(H^2) \geq
\mu(H g) \mu(g^{-1} H) \gg K^{-O(1)}$ (since $\mu(g)=\mu(g^{-1})$ and
$H = H^{-1}$).

We will give Bourgain and Gamburd's proof
with a technical simplification due to Tao \cite{Tonline}. Wigderson
seems to have suggested an analogous simplification (based on an idea
already in \cite{MR2272272}).
\begin{proof}
Just for expository purposes, 
we consider first the case of $\mu = (1/|A|) 1_A$, where $1_A$ is the 
characteristic function of a set $A\subset G$ (i.e., $1_A(g)=1$ if $g\in A$,
$1_A(g)=0$ if $g\notin A$). Then 
\begin{equation}\label{eq:gost}
|\mu\ast \mu|_2^2 = \frac{1}{|A|^4} E(A,A),\;\;\;\;\;
|\mu|_2^2 = \frac{1}{|A|}.\end{equation}
Thus, (\ref{eq:gorto}) means that $E(A,A) \geq K^{-2} |A|^3$. Hence,
by Prop.~\ref{prop:bsg}, there are $A_1', A_2'\subset A$ such that
$|A_1'|, |A_2'|\gg |A|/K^2$ and $|A_1' A_2'|\ll K^{18} \sqrt{|A_1'| |A_2'|}$.
By the Ruzsa triangle inequality (\ref{eq:eolt}), $|A_1' A_1'|\ll K^{36} |A_1'|$.
Thus, by Lem.~\ref{lem:corot},
 $A_1'$ lies in 
a union of $\ll K^{O(1)}$ cosets of an $O(K^{O(1)})$ approximate subgroup $H$
of size $\ll K^{O(1)} |A_1'| \leq K^{O(1)} |A|$. At least one of these cosets
$H g$ must contain $\gg K^{-O(1)} |A_1'|$ elements of $A_1'$, and thus of $A$. 
Hence $\mu(H g)\gg K^{-O(1)}$.

Now consider the case
 of general $\mu$.
The idea is that (thanks in part to (\ref{eq:gorto})) the bulk of $\mu$ is given by the values $\mu(g)$ neither
much larger nor much smaller than a certain value $a$; that ``bulk'' (call it
$\mu_\sim$) behaves essentially as a characteristic function, thus reducing
the situation to the one we have already considered.

Inspired by the second equation in (\ref{eq:gost}), we define 
$a=|\mu|_2^2$, and let $A$ be the set of all $g\in G$ with $\mu(g)
\geq a/(C K^c)$, where $c, C>0$ will be set later. We let $\mu_A
= (1/|A|) 1_A$; we must check that $|\mu_A\ast \mu_A|_2$ is large relative
to $|\mu_A|_2 = 1/\sqrt{|A|}$.

First, note that each $g\in A$ makes, by definition, a contribution of 
$\geq a^2/(C^2 K^{2c})$ to $|\mu|_2^2$; hence $|A|\leq C^2 K^{2 c}/a$, and so
$1/|A|\geq a/(C^2 K^{2 c})$.

We split $\mu = \mu_< + \mu_\sim + \mu_>$, where $\mu_<(g) = \mu(g)$ when
$\mu(g)<a/(C K^c)$ and $0$ otherwise, and $\mu_>(g) = \mu(g)$ when
$\mu(g)>C K^c a$ and $0$ otherwise. Now $|f\ast g|_2 \leq |f|_2 |g|_1$
for any $f$, $g$ (Young's inequality, special case; follows from Cauchy-Schwarz).
Hence 
\[\begin{aligned}|\mu \ast \mu_<|_2  &= |\mu_< \ast \mu|_2
\leq |\mu|_1 |\mu_<|_2 \leq 
1\cdot \sqrt{|\mu_<|_\infty |\mu_<|_1} \leq \frac{\sqrt{a}}{\sqrt{C K^c}},\\
|\mu_> \ast \mu|_2 &= |\mu \ast \mu_>|_2 
\leq |\mu_>|_1 |\mu|_2 \leq \frac{|\mu_>|_2^2}{\min_{g\in G} |\mu_>(g)|}
\sqrt{a} \leq \frac{a^{3/2}}{C K^c a} \leq \frac{\sqrt{a}}{\sqrt{C K^c}} 
.\end{aligned}\]
Thus, we can afford to cut off the tails: we obtain, by (\ref{eq:gorto}),
\begin{equation}\label{eq:sata}|\mu_\sim \ast \mu_\sim|_2 
\geq K^{-1} \sqrt{a} - \frac{4 \sqrt{a}}{\sqrt{C K^c}}
\geq \frac{1}{5} K^{-1} \sqrt{a},\end{equation} 
where we have set $C=5$, $c=2$. We are almost done; we now need to go from
$\mu_\sim$, which is roughly a characteristic function, to $\mu_A$, which
is actually a characteristic function.

The inequality (\ref{eq:sata}) enables us to bound
\[|\mu_A \ast \mu_A|_2 \geq 
\frac{1/|A|}{|\mu_\sim|_\infty} 
|\mu_\sim \ast \mu_\sim|_2 \geq 
\frac{1/|A|}{C K^c a} \cdot \frac{1}{5} K^{-1} \sqrt{a} = 
\frac{1}{25 K^3} \frac{1/|A|}{\sqrt{a}} .\]
By $1/|A|\geq a/(C^2 K^{2 c}) = a/(5^2 K^4)$ and
$|\mu_A \ast \mu_A|_2^2 = E(A,A)/|A|^4 \leq 1/|A|$, this implies both
\[|\mu_A \ast \mu_A|_2 \geq \frac{1/|\sqrt{A}|}{5^3 K^5} = 
\frac{|\mu_A|_2}{5 K^2}
\;\;\;\;\;\;
\text{and}\;\;\;\;\;\;
|A|\geq \frac{1/a}{5^4 K^6}.\] 

 We now have
the setup we had at the beginning, only with $\mu_A$ instead of $\mu$
and $5^3 K^5$ instead of $K$. Proceeding as before, we obtain a
$K^{O(1)}$-approximate subgroup $H\subset K$ 
such that $\mu_A(Hg)\gg K^{-O(1)}$ for some $g\in G$, and so
\[\mu(H g) \geq \frac{a}{5 K^2} 1_A(H g) =
\frac{a |A|}{5 K^2} \mu_A(H g) \gg K^{-O(1)}.\]
\end{proof}

\subsection{The sum-product theorem. Growth in solvable
  groups.}\label{sec:sump}
\subsubsection{The affine group and the sum-product theorem.}\label{sec:okto}
The analogue of the following lemma over $\mathbb{R}$ had been known for a
long while 
(Erd\H{o}s-Szemer\'edi \cite{MR820223}). The version over finite fields is harder, since
there is no natural topology or fully natural ordering to work with. 
(Over $\mathbb{R}$, 
there is a brief and very natural proof \cite{MR1472816} based on a result
that is essentially topological \cite{MR729791}; the best
known bound for the sum-product theorem over $\mathbb{R}$ has a direct proof,
also topological \cite{MR2538014}.)
\begin{thm}[Sum-product theorem \cite{MR2053599},
\cite{MR2225493}; see also \cite{MR1948103}]
\label{thm:orb}
For any $A\subset \mathbb{F}_p^*$
with $C < |A| <p^{1 - \epsilon}$, $\epsilon>0$, we have
\[\max(|A\cdot A|,|A+A|) > |A|^{1 + \delta},\]
where $C>0$ and $\delta>0$ depend only on $\epsilon$.
\end{thm}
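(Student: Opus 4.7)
The plan is to argue by contradiction. Suppose that for some small $\delta>0$ one has both $|A+A|\leq K|A|$ and $|A\cdot A|\leq K|A|$ with $K=|A|^\delta$; the aim is to derive a contradiction by exploiting the fact that $\mathbb{F}_p$ admits no proper subfields. Since both $(\mathbb{F}_p,+)$ and $(\mathbb{F}_p^*,\cdot)$ are abelian, the ordinary Pl\"unnecke--Ruzsa inequalities apply twice: for any fixed integers $k,\ell$, the iterated sumset $kA-\ell A$ has size $\leq K^{O_{k,\ell}(1)}|A|$, and the iterated product set $A^{(m)}=A\cdots A^{\pm 1}$ has size $\leq K^{O_m(1)}|A|$. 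In particular, the multiplicative translates $\xi\cdot A$ and the additive translates $\xi+A$ for $\xi$ in these sets behave coherently with respect to $A$.

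Next comes the core step: one must show that under these small doubling hypotheses, a large subset $A'\subset A$ sits inside something that is \emph{approximately closed under both operations}, i.e., an approximate subring. A clean way to implement this is via a point--line incidence count in $\mathbb{F}_p^2$: take points $P=A'\times A'$ and lines $L=\{y=sx+t:s\in A\cdot A,\ t\in A+A\}$. The hypotheses force many incidences (each $a,a',a''\in A'$ generates the incidence $(a,a\cdot a'+a'')\in \ell_{a',a''}$), so $|L|,|P|\leq K^{O(1)}|A|^2$ yet $I(P,L)\gg |A|^3$. An incidence-geometric bound in $\mathbb{F}_p$ (in the Bourgain--Katz--Tao spirit, or the Bourgain--Glibichuk--Konyagin variant) of the shape $I(P,L)\ll (|P||L|)^{3/4}+|P|+|L|$ then gives $|A|^3\ll K^{O(1)}|A|^{3}$ only with $K$ large or $|A|$ nearly $p$; properly quantified, this forces $|A|\gg p^{1-\eta(\delta)}$, contradicting $|A|<p^{1-\epsilon}$ once $\delta$ is chosen small enough relative to $\epsilon$.

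Equivalently, and this is the essence, one shows via Balog--Szemer\'edi--Gowers (in its additive and multiplicative incarnations) plus covering arguments that some dilate-and-translate $\lambda A'+\mu$ is close to a set closed under both $+$ and $\cdot$, i.e., an approximate subfield; since $p$ is prime, $\mathbb{F}_p$ has no proper subfields, so such a set must either collapse to a few points (ruled out by $|A|>C$) or fill out $\mathbb{F}_p$ (ruled out by $|A|<p^{1-\epsilon}$). Either way we reach a contradiction, yielding the desired $\delta=\delta(\epsilon)>0$.

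The main obstacle is the middle step: passing from ``small additive and multiplicative doubling'' to ``approximate subfield'' requires a genuinely new input beyond Pl\"unnecke--Ruzsa, because Pl\"unnecke--Ruzsa alone handles each operation in isolation and does not mix them. The natural tool is a Szemer\'edi--Trotter-type incidence bound in $\mathbb{F}_p$, but such a bound is itself of sum-product strength, so one has to be careful not to argue in a circle. The standard way out is a bootstrap: one proves a weak incidence estimate directly (using only that $A$ is not too close to a subfield, which here is automatic since $\mathbb{F}_p$ has no proper subfields) and then feeds this back in to get the quantitative gain $\delta>0$. Tracking how $\delta$ depends on $\epsilon$ through the cascade of $K^{O(1)}$ losses in Pl\"unnecke--Ruzsa, BSG, and the incidence bound is the delicate quantitative heart of the proof.
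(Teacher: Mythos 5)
Your proposal follows the Bourgain--Katz--Tao/Elekes line: assume both $|A+A|$ and $|A\cdot A|$ are small, extract an approximate subring, and exploit the fact that $\mathbb{F}_p$ has no proper subfields. The paper takes an entirely different route. It does not in fact prove Theorem~\ref{thm:orb} directly; it proves the key intermediate estimate Prop.~\ref{prop:musoff} --- $|4YX+2Y^2X|\geq \tfrac{1}{2}\min(|X||Y|,p)$ --- by reformulating sum-product as a statement about a group action (the torus $T\cong\mathbb{F}_p^*$ acting by conjugation on the unipotent group $U\cong\mathbb{F}_p$ inside the affine group), and then applying the pivoting argument of Prop.~\ref{prop:jutor}, in the spirit of Glibichuk--Konyagin and \cite{MR2359478}. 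The full Theorem~\ref{thm:orb} is then obtained by citing the Katz--Tao lemma, which plays the role that the Ruzsa triangle inequality plays for a single operation. The paper's route is conceptually cleaner, avoids the doubling-in-both-operations contradiction framework altogether, and generalizes directly to growth in solvable groups; your route is tied to commutative ring structure and to incidence geometry.

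There is also a genuine gap in your plan as written. Your central step invokes a Szemer\'edi--Trotter-type incidence estimate over $\mathbb{F}_p$, which you correctly observe is itself of sum-product strength; over $\mathbb{F}_p$, unlike over $\mathbb{R}$, there is no independent topological or ordering input (such as the crossing-number lemma of \cite{MR729791}) to supply such a bound for free. You acknowledge the circularity and gesture at a ``bootstrap'', but you never say what the bootstrap consists of or why it terminates with a positive $\delta(\epsilon)$. In fact the Bourgain--Katz--Tao argument does not take $\mathbb{F}_p$-Szemer\'edi--Trotter as a black-box tool; one works directly with the approximate-subring structure (your third paragraph is the right instinct), trapping a large affine image of $A$ between a set with coherent additive and multiplicative closure and showing that, were $\delta$ too small, one would generate a proper subfield, which $\mathbb{F}_p$ does not have --- with the endpoints $|A|\leq C$ and $|A|\geq p^{1-\epsilon}$ excluding the trivial and full cases. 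That trapping step --- not Pl\"unnecke--Ruzsa, not BSG, and not an incidence black box --- is the heart of the theorem, and your proposal leaves it as a placeholder. Filling it in requires either reproducing the Bourgain--Katz--Tao/Bourgain--Glibichuk--Konyagin combinatorics in detail, or switching to the pivoting-in-the-affine-group method the paper actually uses, which bypasses the contradiction argument entirely.
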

The proof was strengthened and simplified in \cite{MR2289012} and
\cite{MR2359478}

The same result holds for $\mathbb{F}_q$, $q=p^a$, and indeed for arbitrary
fields; we must only be careful to specify that $A$ is not concentrated in a
proper subfield. The strength of this result must be underlined: $A$ is
growing by a factor of $|A|^\delta$, where $\delta>0$ is moreover
 independent of $p$. In contrast, even after impressive recent improvements
(\cite{MR2994508}; see also \cite{MR2738997}), the main additive-combinatorial 
result for abelian groups (Freiman's theorem) gives growth by smaller 
factors. 

Rather than prove Thm.~\ref{thm:orb}, let us prove the key intermediate
result towards it; it is enough for many applications, and it also illustrates
the connection between the sum-product theorem and growth in solvable groups.
The following idea was put forward in \cite[\S 3.1]{HeSL3} and developed 
there and in later works: the sum-product theorem is really a result
about the action of a group on another group; in its usual formulation
(Thm.~\ref{thm:orb}), the group that is acting is $\mathbb{F}_p^*$ 
(by multiplication),
and the group being acted upon is $\mathbb{F}_p^+$.

Let $G$ be the {\em affine group}
\begin{equation}\label{eq:jot}
G = \left\{\left(\begin{matrix} r & a\\0 &1\end{matrix}\right) :
r \in \mathbb{F}_p^*, a\in \mathbb{F}_p\right\}.\end{equation}
Consider the following subgroups of $G$:
\begin{equation}\label{eq:gutr}
U = \left\{\left(\begin{matrix} 1 & a\\0 &1\end{matrix}\right) :
a\in \mathbb{F}_p\right\},\;\;\;\;\;\;
T = \left\{\left(\begin{matrix} r & 0\\0 &1\end{matrix}\right) :
r\in \mathbb{F}_p^*\right\}.\end{equation}
These are simple examples of a {\em solvable} group $G$, of a maximal
{\em unipotent} subgroup $U$ and of a maximal torus $T$. Actually, the
centralizer $C(g)$ of any element $g$
of $G$ not in $\pm U$ is a maximal torus.

We look at two actions -- that of $U$ on itself (by the group operation)
and that of $T$ on $U$ (by conjugation; $U$ is a normal subgroup of $G$). They turn out to correspond to
addition and multiplication in $\mathbb{F}_p$, respectively:
\[\begin{aligned}
\left(\begin{matrix} 1 & a_1\\0 &1\end{matrix}\right) \cdot
\left(\begin{matrix} 1 & a_2\\0 &1\end{matrix}\right) &=
\left(\begin{matrix} 1 & a_1+a_2\\0 &1\end{matrix}\right)\\
\left(\begin{matrix} r & 0\\0 &1\end{matrix}\right) \cdot
\left(\begin{matrix} 1 & a\\0 &1\end{matrix}\right) \cdot
\left(\begin{matrix} r^{-1} & 0\\0 &1\end{matrix}\right) 
&= \left(\begin{matrix} 1 & r a\\0 &1\end{matrix}\right) .
 \end{aligned}\]

Thus, we see that growth in $U$ (under the actions of $U$ and $T$) 
is tightly linked to growth in $\mathbb{F}_p$ (under addition and 
multiplication). 
 
In fact, the result we will prove on these two actions (Prop.~\ref{prop:jutor}),
implies immediately the ``key intermediate result'' we want:
\begin{prop}[\cite{MR2359478}, Corollary 3.5]\label{prop:musoff}
Let $X\subset \mathbb{F}_p$, $Y \subset \mathbb{F}_p^*$ be given with
$X = - X$, $0\in X$, $1\in Y$. Then
\[|4 Y X + 2 Y^2 X|\geq \frac{1}{2} \min(|X| |Y|,p).\]
\end{prop}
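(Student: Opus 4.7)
I would derive Proposition~\ref{prop:musoff} directly from Proposition~\ref{prop:jutor} by translating the additive--multiplicative combination $4YX + 2Y^2 X$ into a set of products in the affine group $G$ of~(\ref{eq:jot}). Write $u(a) \in U$ for the unipotent matrix with entry $a$ in the upper-right, and $t(r) \in T$ for the toric matrix with diagonal entry $r$; these give identifications $U \cong \mathbb{F}_p^+$ and $T \cong \mathbb{F}_p^*$. The computations displayed just before the statement show that $t(r)\,u(a)\,t(r)^{-1} = u(ra)$, so $T$-conjugation on $U$ realizes multiplication in $\mathbb{F}_p$, while the group law on $U$ realizes addition.

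Under this dictionary, for any $y_1,\ldots,y_4 \in Y$, any $y_5, y_5', y_6, y_6' \in Y$, and any $x_1,\ldots,x_6 \in X$, the product in $G$
\[
\left(\prod_{i=1}^{4} t(y_i)\,u(x_i)\,t(y_i)^{-1}\right) \cdot \left(\prod_{j=5}^{6} t(y_j y_j')\,u(x_j)\,t(y_j y_j')^{-1}\right)
\]
lies in $U$ and equals $u(y_1 x_1 + y_2 x_2 + y_3 x_3 + y_4 x_4 + y_5 y_5' x_5 + y_6 y_6' x_6)$. Hence $u(4YX + 2Y^2 X) \subset U$ is \emph{exactly} the image of a specific length-$6$ word whose first four bricks are $\tilde Y$-conjugates of $u(X)$-elements and whose last two bricks are $\tilde Y \cdot \tilde Y$-conjugates of $u(X)$-elements, where $\tilde Y := t(Y) \subset T$. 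The hypotheses $0 \in X$, $X = -X$, and $1 \in Y$ translate to $u(X)$ being symmetric and containing the identity of $U$, and $\tilde Y$ containing the identity of $T$, matching the normalizations needed for Proposition~\ref{prop:jutor}.

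The hard part will be confirming that this particular word-type ``four $\tilde Y$-conjugates followed by two $\tilde Y \cdot \tilde Y$-conjugates'' is precisely what Proposition~\ref{prop:jutor} produces. I expect the $\tilde Y^2$-factors to arise naturally from commutator identities such as $[t(y_1)\,u(x_1)\,t(y_1)^{-1},\,t(y_2)\,u(x_2)\,t(y_2)^{-1}]$, which, after simplification, involve a torus conjugation by $t(y_1 y_2) \in \tilde Y^2$: in other words, the appearance of $Y^2$ in the answer is a ``second-order'' effect of combining two pure $Y$-conjugates via the non-commutativity of $G$. Once this word-matching is in place, invoking Proposition~\ref{prop:jutor} gives $|4YX + 2Y^2 X| \ge \tfrac{1}{2} \min(|X||Y|, p)$ immediately; the two regimes of the $\min$ correspond respectively to whether the orbit in $U$ saturates $\mathbb{F}_p$ or is still essentially free, and no further case analysis should be needed beyond what Proposition~\ref{prop:jutor} already handles.
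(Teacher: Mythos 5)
Your overall strategy --- transfer Proposition~\ref{prop:jutor} to $\mathbb{F}_p$ via the identification $U\cong \mathbb{F}_p^+$, $T\cong \mathbb{F}_p^*$ --- is exactly the paper's intended route (the text right before the statement says Prop.~\ref{prop:jutor} ``implies immediately'' Prop.~\ref{prop:musoff}). Your dictionary, and your matching of the normalizations $X=-X$, $0\in X$, $1\in Y$ with $A_u=A_u^{-1}$, $e\in A_u$, $e\in A_t$, is right. But the point you defer as ``the hard part'' is where the gap lies, and your speculation about how to close it is incorrect.

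First, the two words do not match. Under your dictionary, the middle expression of Prop.~\ref{prop:jutor}, namely $A_t(A_u)\,A_t^2(A_u)\,A_t(A_u^2)\,A_t^2(A_u)\,A_t(A_u)$, translates to $YX+Y^2X+Y(X+X)+Y^2X+YX$, \emph{not} to $4YX+2Y^2X$. The middle brick is $A_t(A_u^2)=Y(X+X)$: a single torus element conjugating a product of two unipotents. This is generally a proper subset of $YX+YX$ (the same $y$ multiplies both $x$'s), and in particular it is not the same word type as ``two $\tilde Y$-conjugates.'' So you should not expect the word types to be ``precisely'' equal. What saves the derivation is that the words only need to satisfy an inclusion in the correct direction: since $y(x_1+x_2)=yx_1+yx_2$, we have $Y(X+X)\subseteq YX+YX$, and using commutativity of $+$ the whole set satisfies $YX+Y^2X+Y(X+X)+Y^2X+YX\subseteq 4YX+2Y^2X$. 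The lower bound of Prop.~\ref{prop:jutor} then transfers at once. This observation (which you are missing) is the entire content of the deduction, and it is not ``hard.''

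Second, your proposed explanation for the $\tilde Y^2$-factor cannot work: the elements $t(y_i)\,u(x_i)\,t(y_i)^{-1}=u(y_i x_i)$ all lie in $U$, which is abelian, so any commutator $\lbrack t(y_1)u(x_1)t(y_1)^{-1},\ t(y_2)u(x_2)t(y_2)^{-1}\rbrack$ is the identity. There is no ``second-order'' commutator effect inside $U$. The $A_t^2$ in Prop.~\ref{prop:jutor} arises for a completely different reason: in case~(c) of its proof, the unfolding map $\psi_{t_1,t_2}$ is applied to $\phi_{ga}(u,t)$, and the composition produces terms like $t_1(t(g))=(t_1 t)(g)$, so two genuinely distinct torus elements from $A_t$ stack up. That is an artifact of the proof of Prop.~\ref{prop:jutor}, not something you need (or can) rederive when quoting it. In short: drop the commutator heuristic, drop the expectation of exact word-matching, and insert the inclusion $Y(X+X)\subseteq YX+YX$; then your deduction is complete.
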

We write $4 X$ (say) for $X^4$ when $X$ is a subset of an additive group;
thus, e.g., $2 Y^2 X = Y^2 X + Y^2 X$.

Thm.~\ref{thm:orb} follows from Prop.~\ref{prop:musoff}
after the application of the
Katz-Tao Lemma \cite[\S 2.8]{MR2289012}, which plays a role for sums
and products analogous to that played by Lemma \ref{lem:schatte} (Ruzsa)
for group operations.

\subsubsection{Solvable and nilpotent groups}\label{sec:solnil}
Before we actually start working on the affine group (\S \ref{subs:pivo}),
let us take a look at the general situation for solvable and nilpotent groups.
This will not be needed for our treatment of the affine group, but it will
be a chance to consider several intriguing questions, some of them going
outside our main framework.

N. Gill and the author \cite{GH2} proved 
growth in all solvable subgroups of $\GL_n(\mathbb{F}_p)$, in the sense
of Prop.\ \ref{prop:sabat}. The main two challenges were the existence
of elements outside $U$ that are not semisimple (and thus their action on $U$
has non-trivial fixed points) and the relatively complicated subgroup structure
that solvable subgroups of $\GL_n(\mathbb{F}_p)$ can have.
The case of solvable groups over $\mathbb{F}_q$ remains 
open; a proof along the lines of \cite{GH2} should be feasible but cumbersome.
(The case of $\GL_n(\mathbb{F}_q)$ does not reduce to $\GL_n(\mathbb{F}_p)$,
 since doing that in a naive way
would increase the rank $n$ depending on $q$, and we want
results independent on $p$ or $q$.) As usual in this context, infinite fields
can be easier if they have a ``sensible'' topology and/or if the subgroup
structure is simpler (\cite{MR2749571}, \cite{MR2825469}, \cite{MR2912036}).
The problem also becomes more accessible if, instead of aiming at bounds of the
quality $|A^3|\geq |A|^{1+\delta}$, we aim at weaker bounds 
\cite{MR2791295}, since then more tools are admissible.


Already in $G$ as in (\ref{eq:jot}), growth-related behavior can be
complex. We will show that subsets $A$ of $G$ do grow rapidly
under the group operation, outside some very specific circumstances.
However, the action of $G$ on $U$ does not, in general, give us 
expander graphs.
To be precise: identify $U$ with $\mathbb{F}_p$, fix a 
$\lambda\in \mathbb{Z}^+$, and say we have $\epsilon$-expansion
if, for every $S\subset \mathbb{F}_p$ with $|S|<p/2$,
\begin{equation}\label{eq:ojart}
|S\cup (S+1) \cup \lambda S| \geq (1+\epsilon) |S|.\end{equation}
(Here the addition of $1$ can be thought of as coming from the action
of $U$ on itself, and multiplication by $\lambda$ comes from the action of
$G/U$ on $U$ by conjugation.) 

In order to relate expansion in the sense of (\ref{eq:ojart}) to
eigenvalues of graphs, it is necessary to define Schreier graphs.
A {\em Schreier graph} is given by a set of actions
$A$ on a set $X$ (here $X=\mathbb{F}_p$); the set of vertices is $X$,
and the set of edges is $\{(x,a(x)): x\in X, a\in A\}$. (A Cayley
graph is thus a special case of a Schreier graph: $A\subset X$, $A$
acts on $X$ by multiplication.)
Now, the spectrum of the discrete Laplacian for the
 Schreier graph $\Gamma$ given by $x\mapsto x+1$, $x\to \lambda x$ is given
 in \cite{MR1780213}, which, in particular, shows that there is no constant 
spectral gap, i.e.,
$\lambda_1$ is {\em not} bounded away from $0$ as $p\to \infty$; this implies
that there is no fixed $\epsilon>0$ such that
(\ref{eq:ojart}) holds for all $S\subset \mathbb{F}_p$ with $|S|\geq p/2$
and all sufficiently large $p$.

J. Cilleruelo points out that one can prove this directly by modifying a 
construction
by G. Fiz Pontiveros \cite{Fizpont}, based in turn on an idea of Rokhlin's
\cite{MR0193206}: let $I$ be the reduction of $\{0\leq n\leq \epsilon p/3\}$
modulo $p$, and let $\phi:\mathbb{Z}/p\mathbb{Z} \to \mathbb{Z}/p\mathbb{Z}$
be the multiplication-by-$\lambda$ map; define 
\[S = \bigcup_{0\leq i \leq \frac{2}{\epsilon}} \phi^{-1}(I).\]
Then $|S|\sim p/3$ and $|S\cup (S+1) \cup \lambda S|
\leq (1+\epsilon) |S|$ for any $p$
larger than a constant depending only on $\epsilon$. 


Can one make the somewhat weaker assertion that the diameter of the 
Schreier graph $\Gamma$ is
small? For $\lambda\in \mathbb{Z}^+$, this is very easy: writing $x$ in base 
$\lambda$, we
obtain that the diameter if $O_\lambda(\log x)$. If we want a bound
independent of $\lambda$ for $\lambda\in \mathbb{F}_p^*$ arbitrary, 
the problem is subtler. (We must impose the condition that the order
of $\lambda$ not be too small, or else what we try to prove could be false.)
Using a result of Konyagin \cite{MR1289921}, it is possible to show that the 
diameter is $O((\log |G|)^{O(1)})$, where both implied constants are absolute.\footnote{This was the outcome
of a discussion among B. Bukh, A. Harper and the author. We thank E.
Lindenstrauss for referring us to Konyagin's paper.}

{\em Nilpotent groups.} The case of nilpotent groups is fairly close to
the special case of abelian groups. Here Fisher-Katz-Peng
\cite{MR2587441} and Tao \cite{MR2791295} laid out the groundwork;
the recent preprint \cite{Tointon} contains a general statement. 
In summary -- tools that yield Freiman's theorem over abelian groups
(also called Freiman-Ruzsa in that generality) can be adapted to work
over nilpotent groups. On the flip side, bounds are quantitatively weak
and necessarily conditional on the non-existence of significant structure
(progressions), just as in the abelian case.

Finally, let us take a brief look at the asymptotics of growth in infinite
solvable and nilpotent groups. Here some of the main results are
\cite{MR0248688}, \cite{MR0244899}, \cite{MR0379672}, \cite{MR0369608};
in summary, a set of generators $A$ of a
solvable group $G$ has polynomial growth (i.e. $|A^k|\ll |A|^{O(1)}$) if and
only if $G$ has a nilpotent subgroup of finite index. (When $G$ is not
assumed to be solvable, the ``only if'' direction becomes very hard; this is
due to Tits for linear groups (a consequence of the ``Tits alternative''
\cite{MR0286898}) and to Gromov for general groups (the celebrated
paper \cite{MR623534}).)

\section{Group actions: stabilizers, orbits and pivots}\label{sec:sojor}
\subsection{The orbit-stabilizer theorem for sets}\label{subs:joro}
A leitmotif recurs in recent work on growth in groups: results on subgroups
can often be generalized to subsets. This is especially the case if the
proofs are quantitative, constructive, or, as we shall later see,
probabilistic.

The {\em orbit-stabilizer theorem for sets} is an example both paradigmatic
and basic; it underlies a surprising number of other results on growth. 
It also helps to
put forward a case for seeing group actions, rather than groups themselves,
as the main object of study. We state it as in \cite[\S 3.1]{MR3152942}, though it is 
already implicit in \cite{Hel08} (and clear in \cite{HeSL3}).

We recall that an {\em action} $G\curvearrowright X$ is a homomorphism from a group $G$ to 
the group of automorphisms of a set $X$. (The automorphisms of a set $X$
are just the bijections from $X$ to $X$; we will see actions on objects with
richer structures later.)  For $A\subset G$ and $x\in X$, 
the {\em orbit} $A x$ is the
set $A x = \{g\cdot x : g\in A\}$. The {\em stabilizer} $\Stab(x)\subset G$ 
is given by $\Stab(x) = \{g\in G: g\cdot x = x\}$.

(Permutation group theorists prefer to use actions on the right; they
write $x^g$ for $g(x)$, $G_x$ for $\Stab(x)$, and use right cosets by default.
We will use that notation in \S \ref{sec:pergro}, where we will also
 write $x^A$ instead of $Ax$, in consequence.) 

\begin{lem}[Orbit-stabilizer theorem for sets]\label{lem:orbsta}
Let $G$ be a group acting on a set $X$. Let $x\in X$, and let $A\subseteq G$ be non-empty. 
Then
\begin{equation}\label{eq:applepie}
|(A^{-1} A) \cap \Stab(x)|\geq \frac{|A|}{|A x|}.\end{equation}
Moreover, for every $B\subseteq G$,
\begin{equation}\label{eq:easypie}
|B A| \geq |A \cap \Stab(x)| |B x| .
\end{equation}
\end{lem}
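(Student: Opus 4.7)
The plan is to prove both inequalities by constructing explicit injections, in the spirit of the pigeonhole and injectivity arguments that underlie Lemma~\ref{lem:schatte}.

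For part (\ref{eq:applepie}), I would consider the surjection $\phi\colon A\to Ax$ defined by $\phi(a)=ax$. Since $|A|$ elements map onto $|Ax|$ elements, by pigeonhole some fiber $\phi^{-1}(y_0)\subseteq A$ has size at least $|A|/|Ax|$. Fix some $a_0$ in this fiber, and send every $a\in \phi^{-1}(y_0)$ to $a_0^{-1}a$. Each such element lies in $A^{-1}A$, and since $a_0x=ax=y_0$ it also lies in $\Stab(x)$. The assignment $a\mapsto a_0^{-1}a$ is obviously injective, so $|A^{-1}A\cap\Stab(x)|\geq |\phi^{-1}(y_0)|\geq |A|/|Ax|$.

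For part (\ref{eq:easypie}), the natural thing is to construct an injection $Bx\times(A\cap\Stab(x))\hookrightarrow BA$. For each $y\in Bx$, choose once and for all some representative $b_y\in B$ with $b_yx=y$. Then map $(y,s)\mapsto b_y s$, which lands in $BA$. To check injectivity, suppose $b_y s=b_{y'}s'$ with $s,s'\in \Stab(x)$. Applying both sides to $x$ gives $b_yx=b_{y'}x$, i.e.\ $y=y'$, hence $b_y=b_{y'}$ and therefore $s=s'$. This yields $|BA|\geq |Bx|\cdot|A\cap\Stab(x)|$, as desired.

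There is no real obstacle here; the only subtlety is remembering to use the stabilizer condition at the right moment (after applying both sides to $x$) in part (\ref{eq:easypie}). The argument is purely combinatorial and uses nothing beyond the definition of an action, which fits the philosophy stressed just before the statement: the proof works identically whether $A$ is a subgroup (in which case one recovers the classical orbit-stabilizer identity $|A|=|A\cap\Stab(x)|\cdot|Ax|$) or an arbitrary finite subset.
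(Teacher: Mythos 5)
Your proof is correct and follows exactly the route the paper hints at (pigeonhole for (\ref{eq:applepie}), a counting/injection argument for (\ref{eq:easypie})); the paper leaves the details as an exercise, and your write-up fills them in as intended.
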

The usual orbit-stabilizer theorem is the special case $A = B = H$, $H$
a subgroup of $G$.
\begin{proof}[Sketch of proof]
Exercise: (\ref{eq:applepie}) is proven by pigeonhole, (\ref{eq:easypie})
by counting.
\end{proof}

Let $H$ be a subgroup of $G$.
The following lemmas are all direct consequences of the above for the
natural action $G\curvearrowright X=G/H$ defined by group multiplication. 
Lemma \ref{lem:musa} gives us elements in a subgroup of $G$;
Lemmas \ref{lem:dora}--\ref{lem:arana} tell us that, to obtain growth in a group, it is enough to obtain growth in 
a subgroup or in a quotient.
\begin{lem}\label{lem:musa} {\rm \cite[Lem. 7.2]{HeSL3}}
Let $G$ be a group and $H$ a subgroup thereof. Let $A\subset G$ be a 
non-empty set. Then
\begin{equation}\label{eq:vento}
|A A^{-1} \cap H| \geq \frac{|A|}{r},\end{equation}
where $r$ is the number of cosets of $H$ intersecting $A$. 
\end{lem}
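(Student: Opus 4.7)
The plan is to apply the orbit-stabilizer theorem for sets (Lemma \ref{lem:orbsta}) to the natural action of $G$ on the coset space $G/H$ by left multiplication, $g \cdot (g'H) = (gg')H$. The obvious choice of basepoint is $x = H$ (the coset of the identity); then $\Stab(x) = \{g \in G : gH = H\} = H$ exactly, so $\Stab(x) \cap (\cdot)$ will produce the intersection with $H$ that appears on the left-hand side of (\ref{eq:vento}).

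To get the product $AA^{-1}$ (rather than $A^{-1}A$) in the conclusion, I would apply (\ref{eq:applepie}) with $A$ replaced by $A^{-1}$, obtaining
\[
|A A^{-1} \cap H| \;=\; |(A^{-1})^{-1}(A^{-1}) \cap \Stab(x)| \;\geq\; \frac{|A^{-1}|}{|A^{-1} \cdot x|} \;=\; \frac{|A|}{|A^{-1} \cdot H|}.
\]
The remaining point is to identify $|A^{-1} \cdot H|$ as the number $r$ of cosets of $H$ meeting $A$. The orbit $A^{-1} \cdot H$ is the set of cosets $\{a^{-1}H : a \in A\}$, and two such cosets coincide, $a_1^{-1}H = a_2^{-1}H$, iff $a_1 a_2^{-1} \in H$, i.e.\ iff $Ha_1 = Ha_2$. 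Hence $|A^{-1} \cdot H|$ equals the number of right cosets of $H$ that intersect $A$, which matches $r$ in the statement.

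There is essentially no obstacle: once the action is set up, the result is immediate from (\ref{eq:applepie}). The only point requiring a moment's care is the bookkeeping of left versus right cosets — taking $A^{-1}$ (as opposed to $A$) as the acting set is what converts the orbit count into a count of right cosets and produces $AA^{-1}$ rather than $A^{-1}A$ on the left-hand side. This is also what makes the lemma a clean specialization of the orbit-stabilizer framework emphasized in \S\ref{subs:joro}: subgroups $H$ are recovered as stabilizers of a point in a transitive $G$-action, and the statement about $|A|$ and its coset distribution is just the orbit-stabilizer inequality in disguise.
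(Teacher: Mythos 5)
Your proof is correct and takes essentially the same approach the paper sketches: apply (\ref{eq:applepie}) to the natural $G$-action on $G/H$ with basepoint $x=H$. The substitution of $A^{-1}$ for $A$ that you carry out is exactly what is needed to reconcile (\ref{eq:applepie}) (which as written produces $|A^{-1}A\cap H|$ and counts \emph{left} cosets) with the lemma's formulation in terms of $AA^{-1}$ and cosets of $H$ meeting $A$; your observation that $a_1^{-1}H=a_2^{-1}H \iff Ha_1=Ha_2$, so that $r$ should be read as the number of right cosets hit by $A$, is the right bookkeeping.
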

\begin{lem} {\rm \cite[Lem. 3.5]{MR3152942}}\label{lem:dora}
Let $G$ be a group and $H$ a subgroup thereof. 
Let $A\subset G$ be 
a non-empty set with $A = A^{-1}$.
 Then, for any $k>0$,
\begin{equation}\label{eq:avoc1}
|A^{k+1}| \geq \frac{|A^{k}\cap H|}{|A^2\cap H|} |A| .
\end{equation}
\end{lem}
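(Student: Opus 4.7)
The plan is to derive the lemma from the orbit-stabilizer theorem for sets (Lemma \ref{lem:orbsta}) applied to the natural left-multiplication action of $G$ on the coset space $X = G/H$. Let $x \in X$ be the coset $H$ itself, so that $\Stab(x) = H$. The orbit $Ax$ is then precisely the set of cosets of $H$ that meet $A$, and I will denote its cardinality by $r = |Ax|$.

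First I would apply inequality (\ref{eq:easypie}) of Lemma \ref{lem:orbsta}, but with the set called ``$A$'' in that lemma replaced by our $A$ and the set called ``$B$'' replaced by our $A^k$. This yields
\[
|A^{k+1}| \;=\; |A^k \cdot A| \;\geq\; |A \cap \Stab(x)| \cdot |A^k x|.
\]
That is not quite what we want, because the factor $|A \cap H|$ is typically smaller than $|A^k \cap H|$. So instead I would apply (\ref{eq:easypie}) with the roles reversed: set $B = A$ and replace ``$A$'' by $A^k$. This gives
\[
|A^{k+1}| \;=\; |A \cdot A^k| \;\geq\; |A^k \cap H| \cdot |Ax| \;=\; |A^k \cap H| \cdot r.
\]

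Next I need a lower bound on $r$ in terms of $|A|$ and $|A^2 \cap H|$. For this I would apply the other inequality of Lemma \ref{lem:orbsta}, namely (\ref{eq:applepie}), directly to the set $A$: it gives $|A^{-1} A \cap H| \geq |A|/|Ax| = |A|/r$. Since $A = A^{-1}$ by hypothesis, $A^{-1} A = A^2$, so this reads $|A^2 \cap H| \geq |A|/r$, i.e.\ $r \geq |A|/|A^2 \cap H|$. Plugging this into the previous display yields exactly
\[
|A^{k+1}| \;\geq\; |A^k \cap H| \cdot \frac{|A|}{|A^2 \cap H|},
\]
as required. (This last step is, up to rewriting, the content of Lemma \ref{lem:musa}; one could equally well cite that lemma directly rather than re-deriving it.)

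There is no real obstacle here once one has the orbit-stabilizer theorem for sets in hand: the entire argument is a matter of choosing the two parameters in (\ref{eq:easypie}) and (\ref{eq:applepie}) correctly, and of recognising that the cardinality $r$ of the orbit $Ax$ is the common quantity that links ``growth of $A^k$ inside $H$'' to ``spread of $A$ across cosets of $H$''. The hypothesis $A = A^{-1}$ is used only in the very last step, to identify $A A^{-1} \cap H$ with $A^2 \cap H$.
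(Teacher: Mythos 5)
Your proof is correct and follows essentially the same route the paper outlines: apply (\ref{eq:easypie}) with $B=A$ and $A^k$ in place of $A$, then apply (\ref{eq:applepie}) to $A$ (equivalently, cite Lemma \ref{lem:musa}) to lower-bound $|Ax|$, and combine. The only superficial difference is that the paper's hint lists (\ref{eq:applepie}) before (\ref{eq:easypie}); since the two bounds are simply multiplied together, the order is immaterial.
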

\begin{lem}{\rm \cite[Lem. 7.4]{HeSL3}}\label{lem:arana}
Let $G$ be a group and $H$ a subgroup thereof. Write $\pi_{G/H}:G\to G/H$
for the quotient map.
Let $A\subseteq G$ be a non-empty set with $A = A^{-1}$. Then, for any $k>0$,
\[|A^{k+2}| \geq \frac{|\pi_{G/H}(A^k)|}{|\pi_{G/H}(A)|} |A| .\]
\end{lem}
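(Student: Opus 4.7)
The plan is to apply the orbit--stabilizer theorem for sets (Lemma \ref{lem:orbsta}) to the natural left action of $G$ on the coset space $X = G/H$, taking the basepoint $x = H$ (the identity coset). Under this choice, $\Stab(x) = H$, and for any finite $S \subseteq G$ the orbit $Sx$ is literally the set of cosets of $H$ meeting $S$, so $|Sx| = |\pi_{G/H}(S)|$. With this dictionary installed, the lemma should fall out of two back-to-back applications of (\ref{eq:applepie}) and (\ref{eq:easypie}).

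First I would use (\ref{eq:easypie}) with $B = A^k$ and with ``$A$'' in the lemma taken to be $A^2$. This yields
\[|A^{k+2}| = |A^k \cdot A^2| \;\geq\; |A^2 \cap H|\cdot |A^k x| \;=\; |A^2 \cap H|\cdot |\pi_{G/H}(A^k)|.\]
So I only need a lower bound on $|A^2 \cap H|$ of the form $|A|/|\pi_{G/H}(A)|$. This is exactly where the hypothesis $A = A^{-1}$ enters: it lets me identify $A^2$ with $A^{-1} A$, putting me in position to invoke (\ref{eq:applepie}).

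Applying (\ref{eq:applepie}) to $A$ with the same basepoint $x=H$, and using $A = A^{-1}$, I get
\[|A^2 \cap H| \;=\; |(A^{-1} A) \cap \Stab(x)| \;\geq\; \frac{|A|}{|Ax|} \;=\; \frac{|A|}{|\pi_{G/H}(A)|}.\]
Multiplying the two inequalities gives the desired bound
\[|A^{k+2}| \;\geq\; \frac{|\pi_{G/H}(A^k)|}{|\pi_{G/H}(A)|}\, |A|.\]

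There isn't really a hard step here once one adopts the group-action viewpoint: the whole content is bookkeeping in the orbit--stabilizer theorem for sets. The only point requiring any care is recognizing that the symmetry assumption $A = A^{-1}$ is used precisely to rewrite $A \cdot A$ as $A^{-1} A$ so that (\ref{eq:applepie}) is directly applicable; without it one would need to insert an extra factor of $A$ (and lose one power in the exponent) to make the identification work.
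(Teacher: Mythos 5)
Your proof is correct and follows precisely the route the paper indicates in its hint for Lemma \ref{lem:arana}: apply (\ref{eq:easypie}) with $A^2$ in place of $A$ and $A^k$ in place of $B$, then apply (\ref{eq:applepie}) to bound $|A^2\cap H|$ from below, using $A=A^{-1}$ to identify $A^2$ with $A^{-1}A$. Nothing to add.
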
 
\begin{proof}[Hints for Lemmas \ref{lem:musa}--\ref{lem:arana}] Let $G\curvearrowright X=G/H$ be the natural action by multiplication;
let $x\in X$ be the equivalence class of the identity (i.e., $H$).
For Lem.~\ref{lem:dora}, use first (\ref{eq:applepie}), then
(\ref{eq:easypie}) (with $A^k$ instead of $A$ and $A$ instead of $B$
for (\ref{eq:easypie})). For Lem.~\ref{lem:arana}, use first (\ref{eq:easypie})
(with $A^2$ instead of $A$ and $A^k$ instead of $B$), then (\ref{eq:applepie}).
\end{proof}

In the above, as is often the case, the assumption $A=A^{-1}$ is inessential
but convenient from the point of view of notation. (Obviously, if $A$  is
a set 
not fulfilling $A=A^{-1}$, we can apply the lemmas to $A\cup A^{-1}$ rather
than to $A$.)

As far as the orbit-stabilizer theorem (Lemma \ref{lem:orbsta}) is concerned,
 the action of $G$ on itself by multiplication is dull -- all stabilizers are
trivial. However, the action of $G$ on itself by conjugation is rather interesting. Write $C_H(g)$ for the centralizer
\[C_H(g) = \{h\in H: h g h^{-1} = g\}\]
and $\Cl(g)$ for the conjugacy class
\[\Cl(g) = \{h g h^{-1} : h\in G\}.\]
We can write $C(g)$ as short for $C_G(g)$.

\begin{lem}\label{lem:lawve}
 Let $A\subset G$ be a non-empty set with $A = A^{-1}$. Then,
for every $g\in A^l$, $l\geq 1$,
\[|A^2\cap C(g)|\geq \frac{|A|}{|A^{l+2}\cap \Cl(g)|}.\]
\end{lem}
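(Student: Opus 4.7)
The plan is to apply the orbit-stabilizer theorem for sets (Lemma~\ref{lem:orbsta}, part~\eqref{eq:applepie}) to the action of $G$ on itself by conjugation, $\phi: G \to \operatorname{Aut}(G)$ defined by $\phi(h)(x) = h x h^{-1}$. Under this action, the stabilizer of $g$ is precisely the centralizer $C(g)$, and the full $G$-orbit of $g$ is the conjugacy class $\Cl(g)$.

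Applied with $x = g$ to our set $A$, equation~\eqref{eq:applepie} gives
\[
|(A^{-1} A) \cap C(g)| \;\geq\; \frac{|A|}{|A \cdot g|},
\]
where $A \cdot g = \{a g a^{-1} : a \in A\}$. Since $A = A^{-1}$, the left-hand side is exactly $|A^2 \cap C(g)|$, so it remains only to bound the size of the orbit $A \cdot g$ from above by $|A^{l+2} \cap \Cl(g)|$.

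For the orbit bound, note that each element $a g a^{-1}$ with $a \in A$ lies in $\Cl(g)$ by definition; moreover, since $g \in A^l$ and $A^{-1} = A$, we have $a g a^{-1} \in A \cdot A^l \cdot A \subseteq A^{l+2}$. Hence $A \cdot g \subseteq A^{l+2} \cap \Cl(g)$, and combining this with the displayed inequality yields the lemma. There is no real obstacle here: the whole argument is a direct instantiation of Lemma~\ref{lem:orbsta} once one identifies the correct action, and the only small point to verify is that conjugating an element of $A^l$ by an element of $A$ lands in $A^{l+2}$, which uses the hypothesis $A = A^{-1}$.
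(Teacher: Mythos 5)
Your proof is correct and is essentially identical to the paper's: both apply the orbit-stabilizer theorem for sets (Lemma~\ref{lem:orbsta}, inequality~\eqref{eq:applepie}) to the conjugation action of $G$ on itself with $x = g$, and both observe that the $A$-orbit of $g$ lies in $A^{l+2} \cap \Cl(g)$ because $g \in A^l$ and $A = A^{-1}$.
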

\begin{proof}
Let $G\curvearrowright G$ be the action of $G$ on itself by conjugation. Apply 
(\ref{eq:applepie}) with $x=g$; the orbit of $g$ under conjugation by $A$ 
is contained in $A^{l+2}\cap \Cl(g)$
\end{proof}

\subsection{A pivoting argument: the affine group}\label{subs:pivo}

We will now see how to obtain growth in the affine group (\ref{eq:jot}).
The main ideas in the proof of Prop.~\ref{prop:jutor} below were extracted
in \cite{HeSL3} from the proof of the sum-product theorem in \cite{MR2359478}. 
In \cite[\S 3]{HeSL3} and then in \cite{GH2}, a similar strategy was shown
to work for more general solvable groups. The theme of {\em pivoting}
will recur in \S \ref{subs:pivoro}.

First, let us see how to construct many elements in $U$ and $T$ starting 
from $A$.
\begin{lem}\label{lem:cano}
Let $G$ be the affine group over $\mathbb{F}_p$ (\ref{eq:jot}). Let $U$
be the maximal unipotent subgroup of $G$, and $\pi:G\to G/U$ the quotient
map.

Let $A\subset G$, $A=A^{-1}$.
Assume $A\not\subset U$; let $x$ be an element of $A$ not in $U$.
 Then 
\begin{equation}\label{eq:hastar}
|A^2 \cap U| \geq \frac{|A|}{|\pi(A)|},\;\;\;\;\;\;
|A^2 \cap T| \geq \frac{|A|}{|A^5|} |\pi(A)|
\end{equation}
for $T=C(x)$.
\end{lem}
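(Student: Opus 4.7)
The first bound falls straight out of Lemma~\ref{lem:musa} with $H = U$: since $U$ is normal in $G$, the number of $U$-cosets meeting $A$ is exactly $|\pi(A)|$, and $AA^{-1} = A^2$ because $A = A^{-1}$.

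For the second bound my strategy is to use the conjugation action of $G$ on itself, combined with the fact that $G/U$ is abelian. Applying Lemma~\ref{lem:lawve} with $g = x \in A$ (so $l = 1$) yields
\[
|A^2 \cap T| \;=\; |A^2 \cap C(x)| \;\geq\; \frac{|A|}{|A^3 \cap \Cl(x)|},
\]
so the task reduces to showing $|A^3 \cap \Cl(x)| \leq |A^5|/|\pi(A)|$. I expect the stronger bound $|A^3 \cap \Cl(x)| \leq |A^4|/|\pi(A)|$ to drop out of the construction below.

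The crucial observation is that $G/U$ is abelian, so conjugation is trivial modulo $U$; hence $\Cl(x) \subseteq \pi^{-1}(\pi(x))$. For each $s \in \pi(A)$ pick some $a_s \in A$ with $\pi(a_s) = s$, and consider
\[
\Phi \colon \pi(A) \times (A^3 \cap \Cl(x)) \longrightarrow A^4, \qquad (s, y) \longmapsto a_s y.
\]
The map lands in $A^4$ since $a_s \in A$ and $y \in A^3$. It is injective: given $a_s y$, one recovers $s = \pi(a_s y)\pi(x)^{-1}$ (using $\pi(y) = \pi(x)$), hence $a_s$, and finally $y = a_s^{-1}(a_s y)$. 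Therefore $|\pi(A)| \cdot |A^3 \cap \Cl(x)| \leq |A^4| \leq |A^5|$, which combined with the orbit--stabilizer bound yields the desired inequality.

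I do not anticipate any real obstacle here; the only substantive input is the abelianness of $G/U$, which is exactly what permits extracting the factor $|\pi(A)|$ by multiplying by the distinct representatives $a_s$. The hypothesis $x \notin \pm U$ plays no role in the combinatorial argument itself and enters only to guarantee that $T = C(x)$ is genuinely a maximal torus; the bound would still hold, with the same proof, for any $x \in A$.
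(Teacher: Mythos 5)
Your proof is correct, and it is in essence the same argument the paper gives: Lemma~\ref{lem:musa} for the first bound, and the orbit--stabilizer machinery for the conjugation action plus the abelianness of $G/U$ for the second. The bookkeeping, however, is arranged a little differently and is worth noting. The paper applies (\ref{eq:applepie}) directly to get $|A^2\cap T|\geq |A|/|A(x)|$, then translates the orbit by $x^{-1}$ so that $A(x)x^{-1}\subset A^4\cap U$ (this is where abelianness of $G/U$ enters: commutators land in $U$), and finally uses (\ref{eq:easypie}) to deduce $|A^4\cap U|\leq |A^5|/|\pi(A)|$. You instead invoke the prepackaged Lemma~\ref{lem:lawve} with $l=1$ to get $|A^2\cap T|\geq |A|/|A^3\cap \Cl(x)|$, use abelianness of $G/U$ to see $\Cl(x)$ sits inside a single $\pi$-fibre, and then construct a bespoke injection $\pi(A)\times(A^3\cap\Cl(x))\hookrightarrow A^4$, which is a hands-on instance of the idea behind (\ref{eq:easypie}). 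Your version is slightly tighter (it actually yields $|A^2\cap T|\geq |A||\pi(A)|/|A^4|$, one power of $A$ better than the stated $|A^5|$), but conceptually the two proofs rest on identical pillars. Your closing remark is also accurate: the combinatorial estimate needs nothing from $x\notin\pm U$; that hypothesis only ensures that $T=C(x)$ is a maximal torus rather than $\pm U$ or all of $G$, which matters for the intended application but not for (\ref{eq:hastar}) itself.
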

Recall $U$ is given by (\ref{eq:gutr}). Since $x\not\in U$, its centralizer
$T=C(x)$ is a maximal torus.
\begin{proof}
By Lemma \ref{lem:musa},
$A_u:= A^{-1} A \cap U$ has at least $|A|/|\pi(A)|$ elements.
Consider the action of $G$ on itself by conjugation. Then, by Lemma 
\ref{lem:orbsta}, $|A^{-1} A\cap \Stab(x)|\geq |A|/|A(x)|$. 
(Here $A(x)$ is the orbit of $x$ under the action (by conjugation) of
$A$.) We set
$A_t := (A^{-1} A) \cap \Stab(x) \subset T$. Clearly, 
$|A (x)| = |A(x) x^{-1}|$ and
$(A x) x^{-1}\subset A^4\cap U$, and so $|A(x)|\leq |A^4 \cap U|$.  At the same time, by (\ref{eq:easypie})
applied to the action $G\curvearrowright G/U$ by left multiplication, 
$|A^5| = |A^4 A|\geq |A^4 \cap U|\cdot |\pi(A)|$. Hence 
\[|A_t| \geq \frac{|A|}{|A^4\cap U|} \geq \frac{|A|}{|A^5|} |\pi(A)|.\] 
\end{proof}

As per previous notation, 
$A^2_t = A_t \cdot A_t$, $A_t(A_u) = 
\{t_1(u_1): t_1\in A_t, u_1\in A_u\}$ and $t(u) = t u t^{-1}$ (that is, $T$ acts on $U$ by conjugation) .

\begin{prop}\label{prop:jutor}
Let $G$ be the affine group over $\mathbb{F}_p$, $U$ the maximal unipotent
subgroup of $G$, and $T$ a maximal torus. Let $A_u\subset U$, $A_t\subset T$. 
Assume
$A_u = A_u^{-1}$, $e\in A_t, A_u$ and $A_u\nsubseteq \{e\}$. Then
\begin{equation}\label{eq:jces}|(A^2_t(A_u))^6|
\geq \frac{1}{2} \min(|A_u| |A_t|,p).\end{equation}
\end{prop}
To emphasize that what we prove really does imply Thm.~\ref{prop:musoff}
as stated, let us make clear that we will actually prove that
\[|A_t(A_u) A_t^2(A_u) A_t(A_u^2) A_t^2(A_u) A_t(A_u)|\geq
\frac{1}{2} \min(|A_u| |A_t|,p),\] which is slightly stronger than
(\ref{eq:jces}).
\begin{proof}
Call $a\in U$ a {\em pivot}
if the function $\phi_{a}:A_u \times A_t \to U$ given by
\[(u,t)\mapsto u t(a) =  u t a t^{-1}\]
is injective. 

{\em Case (a): There is a pivot $a$ in $A_u$.} 
Then
$|\phi_a(A_u,A_t)| = |A_u| |A_t|$, and so 
\[|A_u A_t(a)| \geq |\phi_a(A_u,A_t)| = |A_u| |A_t| .\]
This is the motivation for the name ``pivot'': the element $a$ is the pivot
on which we build an injection $\phi_a$, giving us the growth we want.

{\em Case (b): There are no pivots in $U$.} 
As we are about to see, 
this case can arise only if either $A_u$ or $A_t$ is large with respect to $p$.
Say that $(u_1,t_1)$, $(u_2,t_2)$ {\em collide} for $a\in U$ if
$\phi_a(u_1,t_1) = \phi_a(u_2,t_2)$. Saying that there are no pivots in $U$
is the same as saying that, for every $a\in U$, there are at least two distinct
$(u_1,t_1)$, $(u_2,t_2)$ that collide for $a$. Now, two distinct
$(u_1,t_1)$, $(u_2,t_2)$ can collide for at most one $a\in U\setminus \{e\}$.
(The reader can quickly check this; the reason is that
 a collision corresponds to 
a solution
to a non-trivial linear equation, which can have at most one solution.)
Hence, if there are no pivots, $|A_u|^2 |A_t|^2 \geq |U\setminus \{e\}| = 
p-1$, i.e., $|A_u|\cdot |A_t|$ is large with respect to $p$.
This already hints that this case will not be hard; it will yield to
Cauchy-Schwarz and the like.

 Choose the most ``pivot-like'' $a\in U$, meaning
an element $a\in U$ such that the number of collisions
\[\kappa_a = |\{u_1,u_2\in A_u, t_1,t_2\in A_t: \phi_a(u_1,t_1) = 
\phi_a(u_2,t_2)\}|\]
is minimal. As we were saying, 
two distinct $(u_1,t_1)$, $(u_2,t_2)$ collide for at most
one $a\in U\setminus e$. Hence
the total number of collisions $\sum_{a\in U\setminus \{e\}} \kappa_a$
is $\leq |A_u| |A_t| (p-1) + |A_u|^2 |A_t|^2$, and so
\[\kappa_a \leq \frac{|A_u| |A_t| (p-1) + 
|A_u|^2 |A_t|^2}{p-1} \leq
|A_u| |A_t| + \frac{|A_u|^2 |A_t|^2}{p} .\]
Cauchy-Schwarz implies that $|\phi_a(A_u,A_t)|\geq |A_u|^2 |A_t|^2/\kappa_a$,
and so
\[|\phi_a(A_u,A_t)|\geq \frac{|A_u|^2 |A_t|^2 }{
|A_u| |A_t| + \frac{|A_u|^2 |A_t|^2}{p}}
= \frac{1}{\frac{1}{|A_u| |A_t|} +
\frac{1}{p}}\geq \frac{1}{2} 
\min(|A_u| |A_t|,p).\]

We are not quite done, since $a$ may not be in $A$.
Since $a$ is {\em not} a pivot (as there are none), there exist distinct
$(u_1,t_1)$, $(u_2,t_2)$ such that $\phi_a(u_1,t_1)=\phi_a(u_2,t_2)$. Then
$t_1\ne t_2$ (why?), and so the map $\psi_{t_1,t_2}:U\to U$ given by
$u\mapsto t_1(u) (t_2(u))^{-1}$ is injective. What follows may be thought
of as the ``unfolding'' step, in that there is an element $a$ we want to remove
from an expression, and we do so by applying to the expression a map that will
send $a$ to something known. (We will be using the commutativity of $T$ here.)

For any $u\in U$, $t\in T$, since $T$ is abelian,
\begin{equation}\label{eq:unfold}\begin{aligned}
\psi_{t_1,t_2}(\phi_a(u,t)) &= t_1(u t(a)) (t_2(u t(a)))^{-1} =
t_1(u) t(t_1(a) (t_2(a))^{-1}) (t_2(u))^{-1}\\
&= t_1(u) t(\psi_{t_1,t_2}(a)) (t_2(u))^{-1} = t_1(u) t(u_1^{-1} u_2) (t_2(u))^{-1}
,\end{aligned}\end{equation}
(Note that $a$ has just
disappeared.) Hence, 
\[\psi_{t_1,t_2}(\phi_a(A_u,A_t))\subset A_t(A_u) A_t(A_u^2) A_t(A_u)
\subset (A_t(A_u))^4.\] Since
$\psi_{t_1,t_2}$ is injective, we conclude that
\[|(A_t(A_u))^4| \geq |\psi_{t_1,t_2}(\phi_a(A_u,A_t))| = 
|\phi_a(A_u,A_t)| \geq \frac{1}{2} \min(|A_u| |A_t|,p).\]

There is an idea here that we are about to see again: any element $a$ that 
is not a pivot can, by this very fact, 
be given in terms of some $u_1, u_2\in A_u$, 
$t_1, t_2\in A_t$, and so an expression involving $a$ can often be transformed
into one involving only elements of $A_u$ and $A_t$.

{\em Case (c): There are pivots and non-pivots in $U$.} 
This is what we can think of as an inductive step -- essentially the same
inductive step as in ``escape from subvarieties'' (to be discussed in \S \ref{subs:waziri}). To do induction, we do
not really need an ordering; generation will suffice.
Since $A_u \nsubseteq \{e\}$, $A_u$ generates
$U$. This implies that there is a non-pivot $a\in U$ and a $g\in A_u$ such that
$g a$ is a pivot. Then $\phi_{ag}:A_u\times A_t\to U$ is injective. Much as 
in (\ref{eq:unfold}), we unfold:
\begin{equation}\begin{aligned}
\psi_{t_1,t_2}(\phi_{g a}(u,t)) &= t_1(u t(g) t(a))
(t_2(u t(g) t(a)))^{-1} \\
&= 
t_1(u t(g)) t(u_1^{-1} u_2) (t_2(u t(g)))^{-1} ,
\end{aligned}\end{equation}
where $(u_1,t_1)$, $(u_2,t_2)$ are distinct pairs such that 
$\phi_a(u_1,t_1) = \phi_a(u_2,t_2)$. Just as before,
$\psi_{t_1,t_2}$ is injective.
Hence
\[|A_t(A_u) A_t^2(A_u) A_t(A_u^2) A_t^2(A_u) A_t(A_u)| \geq 
|\psi_{t_1,t_2}(\phi_{g a}(u,t))| = |A_u| |A_t|.\]

The idea to recall here is that, if $S$ is a subset of an orbit $\mathscr{O} = \langle A\rangle x$ such that $S\ne \emptyset$ and $S\ne \mathscr{O}$, then
there is an $s\in S$ and a $g\in A$ such that $g s\not\in S$. In other words,
we use the point at which we escape from $S$.
\end{proof} 
 We are using the fact that $G$ is the affine group over $\mathbb{F}_p$
(and not over some other field) only at the beginning of case (c), when we
say that, for $A_u\subset U$, $A_u \nsubseteq \{e\}$ implies 
$\langle A_u\rangle = U$.

\begin{prop}\label{prop:sabat}
Let $G$ be the affine group over $\mathbb{F}_p$.
Let $U$
be the maximal unipotent subgroup of $G$, and $\pi:G\to G/U$ the quotient
map.

Let $A\subset G$, $A=A^{-1}$, $e\in A$. Assume $A$ is not contained in any
maximal torus.
Then either
\begin{equation}\label{eq:jomar}
|A^{57}| \geq \frac{1}{2} \sqrt{|\pi(A)|} \cdot |A|
\end{equation}
or 
\begin{equation}
|A^{57}| \geq \frac{1}{2} |\pi(A)| p \text{\;\;\;\; and\;\;\;\;\;}
U \subset A^{112}
.\end{equation}
\end{prop}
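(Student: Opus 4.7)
The strategy is to combine Lemma~\ref{lem:cano}, Proposition~\ref{prop:jutor}, and the orbit-stabilizer identity (\ref{eq:easypie}) applied to $\pi:G\to G/U$. Lemma~\ref{lem:cano} furnishes a unipotent piece $A_u\subset A^2\cap U$ and a torus piece $A_t\subset A^2\cap T$; Proposition~\ref{prop:jutor} then produces from these a subset of $U$, contained in a bounded power of $A$, of size $\geq\frac{1}{2}\min(|A_u||A_t|,p)$; and (\ref{eq:easypie}) translates such growth inside $U$ into growth of $A^k$ itself, with $|\pi(A)|$ as a multiplicative factor.

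I first reduce to the case where some $x\in A$ lies outside $\pm U$, since otherwise $|\pi(A)|\leq 2$ and the trivial $|A^{57}|\geq |A|$ already implies (\ref{eq:jomar}). Fixing such $x$, set $T=C(x)$ (a maximal torus), $A_u=A^2\cap U$, $A_t=A^2\cap T$; both lie in $A^2$, both contain $e$, and $A_u$ is symmetric because $A=A^{-1}$. Lemma~\ref{lem:cano} then gives
\[
|A_u|\geq\frac{|A|}{|\pi(A)|},\qquad |A_t|\geq\frac{|A|\,|\pi(A)|}{|A^5|},\qquad |A_u||A_t|\geq\frac{|A|^2}{|A^5|}.
\]
Applying Proposition~\ref{prop:jutor} and tracking exponents (with $A_u,A_t\subset A^2$ one has $A_t(A_u)\subset A^6$, $A_t^2(A_u)\subset A^{10}$, $A_t(A_u^2)\subset A^8$, so the five-factor middle product lies in $A^{40}$, and in $U$ since $U\triangleleft G$) yields $|A^{40}\cap U|\geq \tfrac{1}{2}\min(|A_u||A_t|,p)$. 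Finally, (\ref{eq:easypie}) for the action on $G/U$ with $B=A$ gives $|A^{41}|\geq |A^{40}\cap U|\cdot|\pi(A)|$, hence
\[
|A^{57}|\geq \tfrac{1}{2}\min(|A_u||A_t|,p)\cdot|\pi(A)|.
\]

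A dichotomy on $|A_u||A_t|$ now closes the argument. If $|A_u||A_t|<p$, use $|A_u||A_t|\geq|A|^2/|A^5|$: either $|A^5|\leq|A|\sqrt{|\pi(A)|}$, so the display above gives $|A^{57}|\geq\tfrac{1}{2}|A|\sqrt{|\pi(A)|}$ directly; or $|A^5|>|A|\sqrt{|\pi(A)|}$, so the trivial $|A^{57}|\geq|A^5|$ already exceeds the desired bound. Either way (\ref{eq:jomar}) holds. If instead $|A_u||A_t|\geq p$, then $|A^{57}|\geq\tfrac{1}{2}p\,|\pi(A)|$ and $|A^{40}\cap U|\geq p/2$; a pigeonhole in $U\cong\mathbb{F}_p$ (for odd $p$, both $A^{40}\cap U$ and $u(A^{40}\cap U)^{-1}$ have at least $(p+1)/2$ elements inside $U$, hence must meet for every $u\in U$) gives $U\subset(A^{40}\cap U)^2\subset A^{80}\subset A^{112}$, which is the second alternative.

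The logical skeleton is mechanical once Proposition~\ref{prop:jutor} is granted; the main nuisances are bookkeeping---in particular verifying $A_u\neq\{e\}$ before invoking Proposition~\ref{prop:jutor} (if $A^2\cap U=\{e\}$ then $\pi|_A$ is injective, forcing $|A|=|\pi(A)|$ and placing $A$ within a single transversal of $\pi$, a rigid situation that, together with $A=A^{-1}$, $e\in A$, and ``$A$ is not contained in any maximal torus'', can be handled separately)---and calibrating exponents so that the final bounds fit under $A^{57}$ and $A^{112}$; the slack between my count ($40$ and $80$) and the advertised constants is comfortable.
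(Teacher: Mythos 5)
Your strategy is essentially the same as the paper's: Lemma~\ref{lem:cano} to seed $A_u$ and $A_t$, Proposition~\ref{prop:jutor} to produce many elements of $U$, then (\ref{eq:easypie}) for $\pi:G\to G/U$ to lift that back to growth of $A^k$. The dichotomy on $|A_u||A_t|$ versus $p$ and the bookkeeping are also as in the paper (the paper lands on exponent $56$, you on $40$; the discrepancy is immaterial).

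There is, however, a real gap at the step you flag and then wave off. You take $A_u=A^2\cap U$ and must check $A_u\ne\{e\}$. Your parenthetical observes that $A^2\cap U=\{e\}$ forces $\pi|_A$ to be injective and calls this ``rigid'' and ``handled separately'', but you never handle it, and the rigidity by itself does not give a contradiction: taking $A=\{e,g,g^{-1},h,h^{-1}\}$ with $g=\left(\begin{smallmatrix}r&1\\0&1\end{smallmatrix}\right)$, $h=\left(\begin{smallmatrix}s&1\\0&1\end{smallmatrix}\right)$, $r\ne s$, $r,s\ne\pm1$, $rs^{\pm1}\ne1$, one has $A=A^{-1}$, $e\in A$, $A$ in no maximal torus, yet $A^2\cap U=\{e\}$. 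The paper avoids this by taking $A_u=A^4\cap U$ from the start and using the commutator: pick $g\in A\setminus U$ and, by the hypothesis that $A$ lies in no maximal torus, some $h\in A\setminus C(g)$; then $[h,g]\ne e$ lies in $A^4\cap U$. This costs a few extra powers (the middle product of Proposition~\ref{prop:jutor} then lands in roughly $A^{52}$ rather than $A^{40}$), but the slack you already noted absorbs it comfortably, and then $U\subset(A^{52}\cap U)^2\subset A^{104}\subset A^{112}$ still works.

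On the other side of the ledger, you supply something the paper's proof leaves implicit: the pigeonhole argument in $U\cong\mathbb{F}_p$ giving $U\subset(A^{k}\cap U)^2$ from $|A^k\cap U|\geq p/2$, i.e., the claim $U\subset A^{112}$. The paper only derives $|A^{57}|\ge\frac12|\pi(A)|p$ and stops; your explicit two-line argument is the right way to finish that alternative.
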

\begin{proof}
We can assume $A\not\subset \pm U$, as otherwise what we are trying to prove
is trivial. Let $g$ be an element of $A$ not in $\pm U$; its centralizer $C(g)$
is a maximal torus $T$. By assumption, there is an element $h$ of $A$ 
not in $T$. Then $h g h^{-1} g^{-1}\ne e$. At the same time, it does lie in
$A^4\cap U$, and so $A^4\cap U$ is not $\{e\}$.

Let $A_u = A^4 \cap U$, $A_t=A^2\cap T$; their size is bounded 
from below by (\ref{eq:hastar}). 
Applying Prop.~\ref{prop:jutor}, we obtain
\[|A^{56}\cap U| \geq \frac{1}{2} \min(|A_u| |A_t|,p)  \geq
\frac{1}{2} \min\left(\frac{|A|}{|A^5|} \cdot |A|,p\right).\]
By (\ref{eq:easypie}), $|A^{57}|\geq |A^{56}\cap U|\cdot |\pi(A)|$. Clearly,
if $|A|/|A^5| < 1/\sqrt{|\pi(A)|}$, then $|A^{57}| \geq |A^5| > \sqrt{|\pi(A)|}
 \cdot |A|$.
\end{proof}
The exponent $57$ in (\ref{eq:jomar}) is not optimal, but, qualitatively 
speaking, Prop.\ \ref{prop:sabat} is as good a result as one can aim to for now:
the assumption $A\not\subset T$ is necessary, the bound $\gg |\pi(A)|\cdot p$
can be tight when $U\subset A$. For $A\subset U$, getting a better-than-trivial
bound amounts to Freiman's theorem in $\mathbb{F}_p$, and getting a growth
factor of a power $|A|^{\delta}$ (rather than $\sqrt{|\pi(A)|}$) would involve
getting a version of Freiman's theorem of polynomial strength (a difficult open 
problem).
 
Incidentally, (\ref{eq:jomar}) can be seen as a very simple result of the
``classification of approximate subgroups'' kind: if a set $A$ grows slowly
($|A^k| \leq |A|^{1+\delta}$, $k=57$, $\delta$ small) then either $A$
is contained in a subgroup (a maximal torus) or $A$ is
almost contained in a subgroup
($U$, with ``almost contained'' meaning that $|\pi(A)|\leq |A|^{\delta}$) or
$A^k$ contains a subgroup ($H=U$) such that $\langle A\rangle/H$ is nilpotent
(here, in fact, abelian).

A result of this kind was what \cite{GH2} proved for
solvable subgroups and what \cite{HeSL3} proved for $\SL_3(\mathbb{F}_p)$;
that is to say, one can try to classify growth in general linear algebraic
groups, leaving only the nilpotent case aside.
This was called the ``Helfgott-Lindenstrauss conjecture'' in 
\cite{BGTstru},
which proved it in an impressively general but quantitatively very weak 
sense. In particular, \cite{BGTstru} does recover a 
proof of Gromov's theorem (close to Hrushovski's \cite{MR2833482}), but it does not seem strong
enough to give useful bounds for finite groups.

\begin{center} 
* * *
\end{center}

The use of pivoting for general groups was first advocated in \cite{HeSL3},
but it came to full fruition only later, thanks to \cite{BGT}
and \cite{PS}: due in part 
to what in retrospect was a technical difficulty -- namely, an intermediate
result had an unnecessarily weak quantifier (see the remarks at the end
of \S \ref{subs:dime}) -- 
\cite{HeSL3} still uses a sum-product theorem at a certain point, though
it does develop a more abstract setting, which we have just
demonstrated here in the simplest case. 

{\em Excursus.} Let us comment briefly on how the sum-product theorem was used 
in the original proof of Thm.~\ref{thm:main08}. The point
 is not (as one might naively think) that multiplying matrices involves
summing products of matrix entries; that leads nowhere. Rather, the first
proof of Thm.~\ref{thm:main08} used at one point the identity
\begin{equation}\label{eq:feglar}
\tr(g) \tr(h) = \tr(gh) + \tr(g h^{-1}),
\end{equation}
valid for $\SL_2$, to show that, since the set $\tr(A)$ grows {\em either}
under multiplication or addition (by the sum-product theorem), it grows
{\em both} under multiplication and addition. Growth under addition then
 gets used to show that $\tr(A)$ grows under linear combinations, which
are then 
achieved by multiplying elements of $A$ by a fixed element of $A$; thus,
$\tr(A^k)$ grows as $k$ increases, and this is used to show that $A^k$ grows.

Later generalizations (\cite{HeSL3}, \cite{GH1}) showed that one can work with 
just about any linear identities involving traces instead of (\ref{eq:feglar}), provided that there are 
enough (independent) identities. This worked up to a certain point for $\SL_n$
and $\Sp_{2n}$, but not for $\SO_n$, thereby frustrating a full proof for
$\SL_n$. Both \cite{BGT} and \cite{PS} showed one can do without such 
identities altogether.


\section{Growth in linear algebraic groups}\label{sec:corro}

Here we will go over an essentially complete and self-contained proof of
Thm.~\ref{thm:main08}. The proof we will give
is somewhat more direct and easier to generalize than that in \cite{Hel08};
it is influenced by \cite{HeSL3}, \cite{BGT}, \cite{PS}, and also by
the exposition in \cite{MR3144176}. The basic elements are, however, the same:
a dimensional estimate gives us tori with many elements on them, and, aided
by an escape lemma, we will be able to use these tori to prove the theorem
by contradiction, using a pivoting argument (indirectly in \cite{Hel08},
directly here). The proof of the case of $\SL_2$ will be used to anchor
a more general discussion; we will introduce the concepts used in the general 
case, explaining them by means of $\SL_2$. We will actually prove Thm.~\ref{thm:main08} for a general finite field $\mathbb{F}_q$, since we have no longer
any use for the assumption that $q$ be prime.

We will then show how Thm.~\ref{thm:main08}  
and Prop.~\ref{prop:flatlem} imply Thm.~\ref{thm:bg} (Bourgain-Gamburd).

\subsection{Escape}\label{subs:waziri}
At some points in the argument, we will need to make sure that we can
find an element $g\in A^k$ that is {\em not} special: for example,
we want to be able to use a $g$ that is not unipotent, that does not have
a given $\vec{v}$ as an eigenvector, that {\em is} regular semisimple
(i.e., has a full set of distinct eigenvalues), etc. As \cite{BGT} states,
arguments allowing to do this appear in several places in the literature.
The first version of \cite{Hel08} did this ``by hand'' in each case, and so does
\cite{MR3144176}; that approach is useful if one aims at optimizing bounds, but
our aim here is to proceed conceptually. The following general statement, used
in \cite{HeSL3}, is modelled very closely after \cite[Prop. 3.2]{MR2129706}.

\begin{lem}[Escape]\label{lem:esc}
Let $G$ be a group acting linearly on a vector space $V/K$, $K$ a field.
Let $W$ be a subvariety of $V$ all of whose components have positive codimension
in $V$. Let $A\subset G$, $A =A^{-1}$, $e\in A$; 
let $x\in V$
be such that the orbit $\langle A\rangle\cdot x$ of $x$ is not contained in $W$.

Then there are constants $k$, $c$ depending only the number, dimension and
degree of the irreducible components of $W$ such that
there are at least $\max(1,c|A|)$ elements
$g\in A^k$ for which $g x\notin W$. 
\end{lem}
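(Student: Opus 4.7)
The plan is to prove the lemma by induction on a suitable ``complexity'' measure for $W$---for instance a lexicographic refinement of the multiset of pairs $(\dim W_i, \deg W_i)$ of its irreducible components, arranged so that any proper subvariety $W' \subsetneq W$ has strictly smaller complexity. The base case $W = \emptyset$ is immediate: every $g \in A^1 = A$ is an escape, so $k=1$ and $c=1$ suffice.

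For the inductive step I would split into two cases depending on whether $W$ is stable under $A$. In the stable case $aW \subset W$ for every $a \in A$; applying this with $a^{-1} \in A$ (using $A = A^{-1}$) gives $a^{-1}W \subset W$, hence $W \subset aW \subset W$, forcing $aW = W$. Thus $W$ is $\langle A\rangle$-invariant, the orbit hypothesis rules out $x \in W$, and any $g \in \langle A\rangle$ with $gx \in W$ would yield $x = g^{-1}(gx) \in g^{-1}W = W$, a contradiction. Hence every element of $A^1 = A$ escapes, and we may take $k=1$, $c=1$.

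Otherwise some $a_* \in A$ has $a_*W \not\subset W$, and I would work with the subvariety $W' := W \cap a_*^{-1}W$, which is strictly contained in $W$ and therefore of strictly smaller complexity. Since $W' \subset W$ the orbit hypothesis passes down, so the inductive hypothesis supplies constants $(k',c')$, depending only on the complexity of $W'$ and hence only on that of $W$, with $|\{g \in A^{k'} : gx \notin W'\}| \geq \max(1,c'|A|)$. For each such $g$, the defining condition of $W'$ forces either $gx \notin W$ (so $g$ itself escapes $W$ at level $k'$) or $gx \notin a_*^{-1}W$, i.e.\ $a_*g \in A^{k'+1}$ escapes $W$. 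Setting $\phi(g) := g$ or $\phi(g) := a_*g$ accordingly (choose arbitrarily if both apply) defines a map into $A^{k'+1}$, and a coincidence $\phi(g_1)=\phi(g_2)$ with $g_1 \neq g_2$ forces $g_1 = a_*g_2$, so $\phi$ is at most $2$-to-$1$. This yields $|\{g \in A^{k'+1} : gx \notin W\}| \geq \max(1,(c'/2)|A|)$, closing the induction with $k := k'+1$ and $c := c'/2$.

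The main obstacle I anticipate is combinatorial rather than conceptual: pinning down a complexity invariant that is guaranteed to drop strictly under $W \mapsto W \cap a_*^{-1}W$, uniformly in $a_* \in A$. A naive $(\dim,\deg)$ pair is insufficient when $W$ is reducible---intersections can boost degrees while only occasionally lowering dimensions---so some care with a multiset ordering on the invariants of the irreducible components, or with Hilbert polynomials, will be needed to ensure that the induction terminates in a number of steps bounded only in terms of the number, dimensions, and degrees of the irreducible components of $W$, which is precisely the dependence required by the lemma.
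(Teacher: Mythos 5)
Your strategy matches the paper's: induct on a complexity invariant, set $W':=W\cap a_*^{-1}W$, apply the inductive hypothesis to $W'$, and use the observation that each escape from $W'$ yields an escape from $W$ under a map into $A^{k'+1}$ that is at most $2$-to-$1$. Your Case~A (where $A$ stabilizes $W$, hence so does $\langle A\rangle$, hence $x\notin W$ and every $g\in A$ escapes) is the paper's corresponding sub-case, handled if anything a touch more cleanly. The obstacle you flag at the end is real, however, and it is precisely where the paper does its work. Knowing only that $W'\subsetneq W$ gives a strict drop in some Dershowitz--Manna-type multiset ordering on component invariants, but well-foundedness alone does not bound the recursion depth, and the lemma needs $k$ and $1/c$ bounded in terms of the number, dimensions and degrees of the components of the \emph{original} $W$. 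The paper resolves this in two stages: it first treats the irreducible case by plain induction on $\dim W$ (then $gW\cap W$ with $gW\ne W$ is again irreducible of strictly smaller dimension), and for reducible $W$ it does not take an arbitrary $a_*$ with $a_*W\not\subset W$ but isolates a top-dimensional component $W_1$ and produces $g\in A$ with $gW_1\ne W_1$ (with a separate branch discarding $W_1$ outright when $gx\notin W_1$ for all $g\in A$), so that the pair consisting of the maximal component dimension and the number of components of that dimension strictly decreases lexicographically at each step; Bezout (in Fulton--MacPherson form) is then what keeps degrees and component counts under control across the recursion, and hence bounds the chain length. If you prefer the single-measure formulation you sketched, the lexicographic vector $(n_D, n_{D-1}, \dotsc, n_0)$, with $n_d$ the number of $d$-dimensional components, does drop strictly under any passage to a proper subvariety; but you still need Bezout to bound the entries that may grow, and hence the depth and the constants.
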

In other words, if $x$ can escape from $W$ at all, it can escape from $W$ in a 
bounded number of steps. 
\begin{proof}[Proof for a special case]
Let us first do the special case of $W$ an irreducible linear subvariety.
We will proceed by induction on the dimension of 
$W$. If $\dim(W)=0$, then $W$ consists of a single point, and the statement is
clear: $\langle A\rangle\cdot x \not\subset \{x\}$ implies that there is a 
$g_0\in A$ such that $g_0 x \ne x$; if there are fewer than $|A|/2$ such 
elements of $A$, any product $g^{-1} g_0$ with $g x = x$ satisfies
$g^{-1} g_0 x \ne x$, and there are $> |A|/2$ such products.

Assume, then, that $\dim(W)>0$, and that the statement has been proven for all
$W'$ with $\dim(W')<\dim(W)$. If $g W = W$ for all $g\in A$, then either
(a) $g x$ does not lie on $W$ for any $g\in A$, proving
the statement, or
(b) $g x$ lies on $W$ for every $g\in \langle A\rangle$, contradicting
the assumption. Assume that $g W \ne W$ for some $g\in A$; then $W'=gW\cap W$
is an irreducible linear variety with $\dim(W') < \dim(W)$. Thus,
by the inductive hypothesis, there are at least $\max(1, c' |A|)$ elements
$g'\in A^{k'}$ ($c'$, $k'$ depending only on $\dim(W')$) such that
$g' x$ does not lie on $W' = g W\cap W$. Hence, for each such $g'$, either 
$g^{-1} g' x$ or $g' x$ does not lie on $W$. We have thus proven the
statement with $c = c'/2$, $k = k'+1$.
\end{proof}
\begin{proof}[Adapting the proof to the general case]
Remove first the assumption of irreducibility; then $W$ is the union
of $r$ components, not necessarily all of the same dimension. The intersection 
$W' = g W \cap W$ may also have several components, but no more than $r^2$.
Let $W_1$ be a component of $W$ of maximal dimension $d$.  
By the argument in the
first sketch, we can find a $g\in A$ such that $g W_1\ne W_1$. (If $g x$
does not lie on $W_1$ for any $g\in A$, we simply remove $W_1$ from $W$
and repeat.) Hence $W' = g W\cap W$ has fewer components of dimension $d$
than $W$ does. We can thus carry out the induction on (a) the maximum of
the dimensions of the components of $W$, (b) the number of components of
maximal dimension: when (a) does not go down, it stays the same and (b) goes
down; moreover, the number of components of lower dimension stays under control,
as the total number of components $r$ gets no more than squared, as we said.

Removing the assumption that $V$ is linear is actually easy: the same
argument works, and we only need to make sure that the total number of
components (and their degree) stays under control; this is so by
Bezout's theorem (in a general form, such as that in
\cite[p.251]{MR1658464}, where the statement is credited 
to Fulton and MacPherson).
\end{proof}
As Pyber and Szab\'o showed in \cite{PS}, one can merge the ``escape'' argument
above with the ``dimensional estimates'' we are about to discuss, in that, 
in our context, an escape statement such as Lemma \ref{lem:esc} is really a weak
version of a dimensional estimate: Lemma \ref{lem:esc} tells us that many
images $g x$ escape from a proper subvariety $W$, whereas a dimensional estimate
tells us that, if $A$ grows slowly,
 very few images $g x$, $g\in A^k$, lie on a proper subvariety $W\subset G$.
We will, however, use Lemma \ref{lem:esc} as a tool to prove dimensional
estimates and other statements, much as in \cite{HeSL3} (or \cite{BGT}).

\subsection{Dimensional estimates}\label{subs:dime}
By a {\em dimensional estimate} we mean a lower or upper bound on an 
intersection of the form $A^k \cap W$, where $A\subset G(K)$,
$W$ is a subvariety of $G$ and $G/K$ is an algebraic group. As the reader
will notice, the bounds that we obtain will be meaningful when $A$ grows 
relatively slowly. However, no assumption on $A$ is made, other than that it
generate\footnote{Even this can be relaxed to require only that $\langle
  A\rangle$ not be contained in the union $V$ of a bounded number of varieties of 
positive codimension and bounded degree, as is clear from the arguments we will
see and as \cite{BGT} states explicitly. This boundedness condition 
is called ``bounded complexity'' in \cite{BGT}. The ``complexity''
in \cite{BGT} corresponds to the degree vector
$\overrightarrow{\text{deg}}(V)$ in \cite{HeSL3}.} $G(K)$.

Let us first look at a particularly simple example; we will not actually use
it as such here, but it was important in \cite{Hel08} and \cite{HeSL3}, and it
exemplifies what is meant by a ``dimensional estimate'' and one
way in which it can be proven. (Moreover, its higher-rank 
analogues do come into generalizations of what we will do to $\SL_n$ and
other higher-rank groups, and the ideas in its proof will be reused for
Prop.~\ref{prop:bantu}.)

\begin{prop}[{\rm {\cite[Lem~4.7]{Hel08}}; {\cite[Cor.~5.4]{HeSL3}}, case $n=2$}]\label{prop:port}
Let $G = \SL_2$, $K$ a field. Let $A\subset G(K)$ be a finite set with
 $A=A^{-1}$, $e\in A$, $\langle A \rangle = G(K)$.
Let $T$ be a maximal torus of $G$. Then
\begin{equation}\label{eq:hobo}|A \cap T(K)| \ll |A^k|^{1/3},\end{equation}
where $k$ and the implied constant are absolute.
\end{prop}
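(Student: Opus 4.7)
The plan is to produce, for an absolute constant $k_0$, elements $g,g' \in A^{k_0}$ such that the algebraic multiplication morphism
\[
\mu : T \times T \times T \longrightarrow \SL_2, \qquad (t_1,t_2,t_3) \longmapsto t_1 \cdot (g t_2 g^{-1}) \cdot (g' t_3 g'^{-1}),
\]
is dominant. Both source and target have dimension $3$, so dominance is equivalent to generic finiteness, and the generic fibre has size bounded by an absolute constant $d$ (a Bezout-type bound depending only on the degrees of $T$ and $\SL_2$). Restricting $\mu$ to $(A \cap T)^3$ then gives a map into $A^{4k_0+3}$, and controlling its fibres will yield $|A^{4k_0+3}| \gg |A \cap T|^3$, which is the desired inequality.

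The trivial case $|A \cap T| \leq 2$ requires no proof (the centre of $\SL_2$ has order $\leq 2$), so I may assume $A \cap T$ contains a regular semisimple element, which forces $T = C(\cdot)$. Next, I would construct $g,g'$ in two stages via the escape lemma (Lem.~\ref{lem:esc}): first find $g \in A^{k_1}$ with $gTg^{-1} \neq T$ (the locus $\{g : gTg^{-1} = T\} = N_G(T)$ is proper in $G$, and $\langle A\rangle = G \not\subseteq N_G(T)$); then find $g' \in A^{k_2}$, depending on $g$, avoiding the proper closed locus $\mathcal{B}_g \subset G$ on which $\mu$ fails to be dominant. That $\mathcal{B}_g$ is proper follows from exhibiting one good $g'$: for generic $g'$ the three tori $T,\, gTg^{-1},\, g'Tg'^{-1}$ are pairwise distinct, the $2$-dimensional subvariety $T \cdot gTg^{-1}$ meets the $1$-dimensional $g'Tg'^{-1}$ transversely, and the image of $\mu$ is $\SL_2$ up to a proper subvariety.

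The crux is fibre control. Let $U \subset \SL_2$ be the Zariski-open locus on which the algebraic $\mu$ has its generic fibre (of size $\leq d$), and set $V = \SL_2 \setminus U$, a proper subvariety of dimension $\leq 2$. By the fibre-dimension theorem, $W := \mu^{-1}(V)$ is a proper subvariety of $T^3$ of dimension $\leq 2$, i.e.\ a hypersurface. For $(g,g')$ sufficiently generic, $W$ contains no ``coordinate-parallel'' component of the form $T \times T \times \{\mathrm{pt}\}$ (or any coordinate permutation thereof); this is another Zariski-open condition on $(g,g')$ that I would impose by one more round of escape. A Schwartz--Zippel-type argument on $T^3$ then delivers
\[
\bigl| W \cap (A \cap T)^3 \bigr| \leq c\, M^2,
\]
where $M = |A \cap T|$ and $c$ is absolute. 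Thus at least $M^3 - cM^2 \geq M^3/2$ tuples (absorbing the case $M<2c$ into the implicit constant) land in $U$, each contributing a fibre of size $\leq d$; hence $|A^{4k_0+3}| \geq M^3/(2d)$, which is $|A \cap T| \ll |A^k|^{1/3}$ for $k = 4k_0 + 3$.

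The main obstacle will be the algebro-geometric bookkeeping: verifying dominance of $\mu$ for generic $(g,g')$, pinning down the generic fibre of $\mu$ as an absolute constant $d$, and ensuring $W$ is non-degenerate so that Schwartz--Zippel applies. Each of these amounts to a Zariski-open condition on $(g,g')$; since every complement is a proper subvariety of the ambient space and $\langle A\rangle = G$, iterated use of the escape lemma delivers such $(g,g') \in A^{k_0} \times A^{k_0}$ for an absolute $k_0$. No quantitative input about $A$ is used beyond the fact that it generates $G$, exactly as the statement requires.
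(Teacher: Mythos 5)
Your proposal is correct and follows essentially the same route as the paper: the same trilinear map $(t_1,t_2,t_3)\mapsto t_1\,(g t_2 g^{-1})\,(g' t_3 g'^{-1})$, conjugators $g,g'$ produced by the escape lemma, generic finiteness (equivalently, non-degeneracy of the derivative at a point), and a Schwartz--Zippel-type count on the exceptional locus to absorb $O(|A\cap T|^2)$ bad triples. The paper makes the Schwartz--Zippel step explicit via a descending chain of exceptional varieties $W_0\supset W_1\supset W_2$, and works with the derivative degeneracy locus rather than the preimage of the non-generic locus, but these are cosmetic differences and your sketch fills in without trouble.
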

A {\em maximal torus}, in $\SL_2$ (or $\SL_n$), is just the group of
matrices that are diagonal with respect to some fixed basis of
$\overline{K}^2$. Here $G(K)$ simply means
the ``set of $K$-valued points'' of $G$, i.e., the group $\SL_2(K)$. (In
general, according to standard formalism,
an algebraic group is an abstract object (a variety plus morphisms);
its set of $K$-valued points is a group.)

The meaning of $1/3$ in (\ref{eq:hobo}) is that it equals $\dim(T)/\dim(G)$.
This will come through in the proof: we will manage to fit three copies of
$T$ inside $G$ in, so to speak, independent directions.
\begin{proof}[Proof, as in \cite{HeSL3}]
It would be enough to construct an injective map
\[\phi:T(K)\times T(K)\times T(K)\to G(K)\]
such that $\phi(T(K)\cap A,T(K)\cap A,T(K)\cap A)\subset A^k$, since then
\begin{equation}\label{eq:iskra}\begin{aligned}
|T(K)\cap A|^3 &= |(T(K)\cap A)\times (T(K)\cap A)\times (T(K)\cap A)|\\
 &= |\phi((T(K)\cap A) \times (T(K)\cap A) \times (T(K)\cap A))| \leq |A^k|.
\end{aligned}\end{equation}

It is easy to see that we can relax the condition that $\phi$ be injective;
for example, it is enough to assume that every preimage
$\phi^{-1}(g)$ has bounded size, and even then we can relax the condition
still further by requiring only that $\phi^{-1}(g) \cap (T(K)\setminus S)^3$
be of bounded size,
where $|S|$ is itself bounded, etc. Let us first construct $\phi$ and then see
how far we have to relax injectivity.

We define $\phi:T(K)\times T(K)\times T(K)\to G(K)$ by
\begin{equation}\label{eq:koweo}
\phi(t_0,t_1,t_2) = t_0 \cdot g_1 t_1 g_1^{-1} \cdot g_2 t_2 g_2^{-1},\end{equation}
where $g_1, g_2\in A^{k'}$, $k' = O(1)$ are about to be specified. It
is easy to show that there are $g_1', g_2'\in G(\overline{K})$ such that
$v$, $g_1' v g_1'^{-1}$ and $g_2' v g_2'^{-1}$ are linearly independent, where
$v$ is a non-zero tangent vector to $T$ at the origin. Now, the pairs 
$(g_1',g_2')\in G(\overline{K}) \times G(\overline{K})$ for which
$v$, $g_1' v g_1'^{-1}$, $g_2' v g_2'^{-1}$ are not linearly independent form
a subvariety $W$ of $G\times G$ of bounded degree
(given by the vanishing of a determinant). Since
$G\times G$ is irreducible, and since we have shown that there is at least
one point $(g_1',g_2')$ outside $W$, we see that all the components of $W$
have to be of positive codimension. Hence we can apply Lemma \ref{lem:esc}
(escape) with $A\times A$ (which generates $G\times G$) 
instead of $A$ and $G\times G$ instead of $G$ and $V$, and obtain that
there are $g_1,g_2\in A^{k'}$, $k'=O(1)$, such that 
$v$, $g_1 v g_1^{-1}$ and $g_2 v g_2^{-1}$ are linearly independent.
This means that the derivative of $\phi$ at the origin $(e,e,e)$
of $T\times T\times T$ is non-degenerate when any such $g_1,g_2\in A^k$
are given.

The points of $T\times T\times T$ at which $\phi$ has degenerate derivative
form, again, a subvariety $W_0$; since $T\times T\times T$ is irreducible,
and since, as we have just shown, the origin $(e,e,e)$ does not lie on $W_0$,
we see that $W_0$ is a union of components of positive codimension. This means that there
is a subvariety $W_1 \subset T\times T$ of bounded degree (bounded, mind you,
independently of $g_1$ and $g_2$), made out of components of positive codimension, such that, for all $(t_0,t_1)\in T(K)\times T(K)$
not on $W_1$, there are $O(1)$ elements $t_2\in T(K)$ such that 
$(t_0,t_1,t_2)$ lies on $W_0$; we also see that there is a 
subvariety $W_2\subset T$
of bounded degree and positive codimension such that, for all $t_0\in T(K)$
not on $W_2$, there are $O(1)$ elements $t_1\in T(K)$ such that $(t_0,t_1)$
lies on $W_1$.

Given any point $y$ on $G(K)$, its preimage under the restriction
$\phi|_{(T\times T\times T)\setminus W_0}$ lies on a variety of dimension zero:
if this were not the case, the preimage $\phi^{-1}(y)$ would be a variety
$V_y$ such that there is at least one point $x$ not on $W_0$ lying on a 
component of positive dimension of $V_y$. There would then have to be 
a non-zero tangent vector to $V_y$ at $x$, and we see that its image
under $D\phi$ would be $0$, i.e., $D\phi$ would be degenerate at $x$,
implying that $x$ lies on $W_0$; contradiction. 

The preimage of $y$ under $\phi|_{(T\times T\times T)\setminus W_0}$, besides
being zero-dimensional, is also
of bounded degree, because $\phi$ is of bounded degree. Hence the preimage consists of at
most $C$ points, $C$ a constant.

Similarly, considering the boundedness of the degrees of 
$W_0$, $W_1$ and $W_2$, we see that
 there are at most $O(1)$ points $t_0$ on $W_2$, there are at most
$|A\cap T(K)|\cdot O(1)$ points $(t_0,t_1)\in (A\cap T(K))\times (A\cap T(K))
$ on $W_1$ for $t_0$ not on $W_2$,
and there are at most $|A\cap T(K)|^2\cdot O(1)$ points $(t_0,t_1,t_2)\in
(A\cap T(K))\times (A\cap T(K))\times (A\cap T(K))$ on
$W_0$ for $(t_0,t_1)$ not on $W_1$. Hence
\[|\{x\in X: x\notin W_0(K)\}| \geq |A\cap T(K)|^3 - O(|A\cap T(K)|^2)\]
for $X =  (A\cap T(K))\times (A\cap T(K))\times (A\cap T(K))$,
and so
\[\phi(\{x\in X:
x\notin W_0(K)\}|) \geq \frac{|A\cap T(K)|^3 - O(|A\cap T(K)|^2)}{C}.\]
Since $\phi(X)$ lies in
$A^k$ for $k = 2 k' + 3$, we see that
\[|A\cap T(K)|^3 - O(|A\cap T(K)|^2) \leq C |A|\]
and so $|A\cap T(K)|^3 \ll |A^k|^{1/3}$.
\end{proof}
The proof above follows \cite[Cor.~5.4]{HeSL3}, which establishes the same 
result for $\SL_n$ and indeed for all classical Chevalley groups. It is
stated in more conceptual terms than the proof given in
\cite[Lem.~4.7]{Hel08} (as might be expected). In consequence, it generalizes
more easily; for instance, 
\cite{HeSL3} carries out the same argument for non-maximal tori
and unipotent subgroups. The main
step was stated in general terms in \cite[Prop. 4.12]{HeSL3}.

However, the situation was still not fully satisfactory. There is one passage
in the proof of Prop.~\ref{prop:port} that isn't quite abstract enough: the
one that starts with ``It is easy to show''. This is a very simple computation
for $\SL_2$, and fairly easy even for $\SL_n$. The problem is that it has
to be done from scratch for a specific algebraic
 group $G$ and a specific variety $V$ every time we want to generalize
Prop.~\ref{prop:port}. This means, first, that we have to go through Lie group
types if we want a statement that is general on $G$, and that is tedious.
Second, and more importantly, this kept the author from giving a statement
for fully general $V$ in \cite{HeSL3}, as opposed to a series of statements for
different varieties $V$.

A full generalization in these two senses was achieved independently by
\cite{PS} and \cite{BGT}. It turns out that all we need to know about the
algebraic group $G$ is that it is simple (or almost simple, like $\SL_n$).
Under that condition, and assuming that $\langle A \rangle = G(K)$,
Pyber and Szab\'o proved \cite[Thm~24]{PS} that, for every subvariety 
$V\subset G$ of positive dimension and every $\epsilon>0$,
\begin{equation}\label{eq:absinthe}|A\cap V|\ll_\epsilon |A^k|^{(1+\epsilon) \frac{\dim(V)}{\dim(G)}},
\end{equation}
where $k$ and the implied constant depend only on $\epsilon$, on the
degree and number of the components of $V$ and on the rank (and Lie type) of 
$G$. This they did by greatly generalizing and strengthening the
arguments in \cite{HeSL3} (such as, for example, the proof of 
Lemma \ref{prop:port} above). 

The route followed by Breuillard, Green and Tao \cite{BGT} was a little different.
By then \cite{HeSL3} was known -- the first version was made public in 2008 --
and the author had conversed at length with one of the authors of
\cite{BGT} about the ideas involved and the difficulties remaining.
The preprint of Larsen-Pink \cite{LP} was available, as were works
by Hrushovski-Wagner
\cite{MR2436141} and Hrushovski \cite{MR2833482}\footnote{Pyber and Szab\'o
also mention the earlier paper \cite{MR1329903} (Hrushovski-Pillay) 
as an influence.
Part of the role of \cite{MR2833482} was to make the work of Larsen-Pink
clearer. According to \cite[p.~1108]{LP}, there was actually
a gap in the original version of \cite{LP}, and \cite{MR2833482} filled
that gap, besides giving a more general statement.}.
The aim of \cite{LP} was to give a classification of subgroups of $\GL$ 
without using the Classification Theorem of finite simple groups. This 
involved stating and
proving (\ref{eq:absinthe}) (without $\epsilon$) for $A$ a subgroup of $G(K)$
\cite[Thm. 4.2]{LP}. This proof turned out to be robust (as Hrushovski and
Wagner's 
model-theoretical work may have already indicated): \cite{BGT} adapted it
to the case of $A$ a set, obtaining (\ref{eq:absinthe}) for every $A$ with
$\langle A\rangle = G(K)$ (without the $\epsilon$, i.e., strengthened). 

Rather than prove (\ref{eq:absinthe}) here in full generality, 
we will prove in a case in which
we need it. This is a case that, for $\SL_2$, would have been accessible
by an approach even more concrete than that in \cite{Hel08}, as
\cite{MR3144176} shows. However, the more conceptual proof below is arguably
simpler, and also displays the main ideas in the proof of the general
statement (\ref{eq:absinthe}).

\begin{prop}\label{prop:bantu}
Let $G=\SL_2$, $K$ a field. Let $A\subset G(K)$ be a finite set with $A=A^{-1}$,
$e\in A$, $\langle A\rangle = G$. Let $g$ be a regular semisimple element of $G(K)$. Then
\begin{equation}\label{eq:bernan}
|A\cap \Cl(g)|\ll |A^k|^{2/3} ,\end{equation}
where $k$ is an absolute constant.
\end{prop}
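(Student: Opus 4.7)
The target exponent $\tfrac{2}{3} = \dim \Cl(g)/\dim G$ is the Larsen--Pink signature, so the plan is to follow the recipe of Prop.~\ref{prop:port}: writing $V := \Cl(g)$, I will construct a morphism $\phi \colon V^3 \to G^2$ between equidimensional source and target ($\dim V^3 = 6 = \dim G^2$), argue it is generically finite, and observe that the image of $(A\cap V)^3$ lands in $A^k \times A^k$; this will give $|A \cap V|^3 \ll |A^k|^2$, hence the claim. Imitating the use of conjugators in Prop.~\ref{prop:port} to ``spread'' directions, the natural choice is
\[
\phi_{g_1,g_2}(v_1,v_2,v_3) \;=\; \bigl(v_1 \cdot g_1 v_2 g_1^{-1},\ v_2 \cdot g_2 v_3 g_2^{-1}\bigr),
\]
with $g_1, g_2$ to be chosen inside $A^m$, $m = O(1)$. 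For $v_i \in A\cap V$ the image lies in $A^{2m+2} \times A^{2m+2}$ automatically.

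To verify generic finiteness, I will compute the differential of $\phi_{g_1,g_2}$ at the diagonal point $(g,g,g) \in V^3$. Tangent vectors at $g \in V$ take the form $\eta_i = \mu_i g$ with $\mu_i \in U := (1-\mathrm{Ad}(g))(\mathfrak{g})$, the $2$-dim sum of root spaces for $T = C(g)$; right-translating to Lie algebra coordinates, a direct calculation gives
\[
D\phi(\mu_1,\mu_2,\mu_3) \;=\; \bigl(\mu_1 + \mathrm{Ad}(g g_1)(\mu_2),\ \mu_2 + \mathrm{Ad}(g g_2)(\mu_3)\bigr),
\]
a linear map between the $6$-dim spaces $U^3$ and $\mathfrak{g}^2$. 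Its kernel is trivial exactly when $U \cap \mathrm{Ad}(gg_1)^{-1}(U) \cap \mathrm{Ad}(gg_2)(U) = \{0\}$, which holds off a proper closed subvariety $W \subset G \times G$ whose components correspond to $gg_1 \in N(T)$, $gg_2 \in N(T)$, or an unfortunate alignment between the two $1$-dim pairwise intersections. Applying Lemma~\ref{lem:esc} to $A \times A$ acting on $G \times G$ by left multiplication---whose orbit of $(e,e)$ is all of $G \times G$ under the standing hypothesis $\langle A\rangle = G$---produces such $(g_1,g_2) \in A^m \times A^m$ with $m = O(1)$ avoiding $W$.

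With $g_1, g_2$ so fixed, $\phi$ is dominant of bounded degree, so generic fibers have at most some absolute constant $C$ points, and the closure $B \subset V^3$ of the locus where the fiber is infinite is a proper subvariety of bounded degree. The count on the good locus is then $|(A \cap V)^3 \setminus B| \le C |A^{2m+2}|^2$, and $|(A \cap V)^3 \cap B|$ I would estimate by cascading projections in the spirit of the $W_2 \subsetneq W_1 \subsetneq W_0$ chain at the end of the proof of Prop.~\ref{prop:port}: a well-chosen coordinate projection sends $B$ onto a proper $B_1 \subsetneq V^2$, which in turn projects onto a subvariety $B_2 \subset V$ of dimension $\le 1$; a $1$-dim subvariety of $V = \Cl(g)$ is contained up to a bounded set in a maximal torus, so Prop.~\ref{prop:port} bounds $|A \cap B_2|$. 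The hardest part will be exactly this bookkeeping on $B$: ensuring each projection drops dimension strictly, that degrees stay bounded independently of $g_1, g_2$, and that the $1$-dim subvariety intersections reduce cleanly to Prop.~\ref{prop:port}. Once that is in place, the combined estimate reads $|A \cap V|^3 \ll |A^k|^2$, yielding $|A \cap V| \ll |A^k|^{2/3}$.
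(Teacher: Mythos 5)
Your outer construction is a genuine alternative to the paper's: the paper composes a two-fold product map $Y\times Y\to G$ (with $Y=\overline{\Cl(g)}$), observes that generic fibres are one-dimensional, and then recurses by studying the irreducible components $Z$ of $Y\cap gY$ via a second map $\psi:Z^3\to G$. You instead go straight for an equidimensional map $V^3\to G^2$ ($\dim=6$ both sides) built with two conjugators, and your differential computation at $(g,g,g)$ and the non-degeneracy condition $U\cap\mathrm{Ad}(gg_1)^{-1}(U)\cap\mathrm{Ad}(gg_2)(U)=\{0\}$ are correct; escape produces admissible $(g_1,g_2)$ because $\mathfrak{sl}_2$ has no proper ideals. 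Up to that point the plan is sound.

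The gap is in your treatment of the bad locus $B\subsetneq V^3$. First, the assertion that ``a $1$-dim subvariety of $V=\Cl(g)$ is contained up to a bounded set in a maximal torus'' is false: for any maximal torus $T$, the intersection $\Cl(g)\cap T$ consists of at most two points (the two elements of $T$ with trace $\tr(g)$), so no curve in $\Cl(g)$ lies in a torus. Thus Prop.~\ref{prop:port} cannot be used to bound $|A\cap B_2|$. Second, and more structurally, the cascade $B\supset$ (slices of $B_1$) $\supset$ (slices of $B_2$) does not reduce to counting $O(1)$ points per fibre: since $V$ is $2$-dimensional, the fibres of the projections are generically proper subvarieties of $V$, hence \emph{$1$-dimensional}, not finite. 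At each step of the cascade you therefore need an estimate of the form $|A\cap Z|\ll|A^k|^{1/3}$ for an arbitrary bounded-degree irreducible curve $Z\subset G$. That estimate is not a corollary of Prop.~\ref{prop:port}; it is exactly what the paper's inner argument proves, via a second equidimensional map $\psi:Z^3\to G$ whose non-degeneracy rests on the simplicity of the Lie algebra. In other words, your alternative top-level map $V^3\to G^2$ can replace the paper's outer $Y^2\to G$ reduction, but it does not make the curve estimate avoidable --- you still have to prove $|A\cap Z|\ll|A^k|^{1/3}$ for general $Z$ before the bad-locus bookkeeping closes. Without supplying that, the proof as written is incomplete.
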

For $G=\SL_2$, $g$ is regular semisimple if and only if it has two distinct
eigenvalues. In that case, $\Cl(g)$ is just the
subvariety $W$ of $G$ defined by $W=\{x:\tr(x) = \tr(g)\}$.
(In general, in any linear algebraic group $G$,
the conjugacy class of a semisimple element is a (closed) variety
\cite[Cor.~2.4.4]{MR842444}.)
Thus, $\dim(\Cl(g))/\dim(G) = 2/3$; this is the meaning of the exponent in 
(\ref{eq:bernan}).
The centralizer of a regular semisimple element is a torus.

The proof below is a little closer to \cite{LP} ({\em apud} \cite{BGT})
than to \cite{PS}, and also brings in some more ideas from \cite{HeSL3}.
In both \cite{LP} and \cite{PS}, recursion is used to reduce the problem
to one for lower-dimensional varieties (not unlike what happens in the proof of
Lem.~\ref{lem:esc}).\footnote{The $\epsilon$ in (\ref{eq:absinthe}) appears in 
\cite{PS} because the recursion there does not always end in the 
zero-dimensional case; rather, an excess concentration on a variety
gets shuttled back and forth (``transport'', \cite[Lem.~27]{PS}) 
and augmented by itself a bounded number of times, until it is too large,
yielding a contradiction and thereby proving (\ref{eq:absinthe}).} 
\begin{proof}[Proof of Prop.~\ref{prop:bantu}]
Write $Y$ for the variety $\overline{\Cl(g)}$.
We start as in the proof of Prop.~\ref{prop:port}, defining a map
$\phi:Y\times Y\to G$ by
\begin{equation}\label{eq:slavo}\phi(y_0,y_1) = y_0 y_1.\end{equation}
(We do not bother to conjugate as in (\ref{eq:koweo})
because $Y$ is invariant under conjugation; it is also invariant
under inversion.)
The preimage of a generic point of $G$ is not, unfortunately, $0$-dimensional,
since $\dim(Y\times Y) = 2\cdot 2 > 3 = \dim(G)$. Let $g\in \phi(Y\times
Y)$. The preimage of $g$ is
\begin{equation}\label{eq:kolz}\{(y_0,y_0^{-1} g): y_0 \in Y, y_0^{-1} g\in Y\} = 
\{(y_0,y_0^{-1} g): y_0 \in Y \cap g Y\}.\end{equation}
It is clear from this that the dimension of the preimage of $g$ equals 
$\dim(Y\cap g Y)$, and so there are at most two
 points with $2$-dimensional preimage, namely, $g = \pm e\in G$.  
Assume $g\ne \pm e$.

By the usual
$|\text{domain}| \geq |\text{image}|/|\text{largest preimage}|$ argument, 
(\ref{eq:kolz}) implies
\[\phi(\{(y_0,y_1)\in (A\cap Y)\times (A\cap Y): y_1\ne \pm y_0^{-1}\}) \geq
\frac{|A\cap Y| (|A\cap Y|-2)}{\max_{g\ne \pm e} |A\cap Y\cap g Y|},\]
and, since $\phi(\{(y_0,y_1)\in (A\cap Y)\times (A\cap Y)\})\subset A^2$,
we see that
\begin{equation}\label{eq:korot}
|A\cap Y| \leq 2 + \sqrt{|A^2|\cdot \max_{g\ne \pm e} |A\cap Y\cap g Y|}.
\end{equation}
Hence, we should aim to bound $|A\cap Y\cap gY|$ from above.
The number of components of $Y\cap gY$ is $O(1)$ because the degree of $Y$
and $gY$ is bounded; let $Z$ be the irreducible component of $Y\cap gY$
containing the most elements of $A$. Since $g\ne \pm e$, $\dim(Z)\leq 1$;
we can assume that $\dim(Z)=1$, as otherwise what we wish to prove is trivial.
We want to bound $A\cap Z$.
(This is the {\em recursion} we referred to before; we are descending
to a lower-dimensional variety $Z$.)

We will now consider a map $\psi:Z\times Z\times Z\to G$ given by
\[\psi(z_0,z_1,z_2) = z_0 \cdot g_1 z_1 g_1^{-1} \cdot g_2 z_2 g_2^{-1}.\]
Much as in the proof of Prop.~\ref{prop:port}, we wish to show that
there are $g_1,g_2\in A^{k'}$ such that, at all points of $Z\times Z\times Z$ 
outside a proper subvariety $W_0$ of $Z\times Z\times Z$, the derivative of
$\psi$ is non-degenerate. Just as before, it will be enough
to find a single point of $Z\times Z\times Z$ at which the derivative
is non-degenerate. Choose any point $z_0$ on $Z$; we will look at the point
$(z_0,z_0,z_0)$.

We write $\mathfrak{z}$ for the tangent space of $Z z_0^{-1}$ at the
origin; it is a subspace of the Lie algebra of $G$. First, we compare
the vector space $\mathfrak{z}$ (obtained by deriving 
$\psi(z,z_0,z_0) \psi(z_0,z_0,z_0)^{-1}$ at $z=z_0$) and the vector
space $z_0 g_1 \mathfrak{z} g_1^{-1} z_0^{-1}$ (obtained by deriving 
$\psi(z_0,z,z_0) \psi(z_0,z_0,z_0)^{-1}$ at $z=z_0$).
We would like them to be linearly independent, i.e., intersect only at
the origin; this is the same as asking
whether there is a $g\in G(\overline{K})$ such that $\mathfrak{z}$ and
 $g\mathfrak{z} g^{-1}$ are linearly independent, since we can set 
$g_1 = z_0^{-1} g$. If there were no such $g$, then $g\mathfrak{z} g^{-1} =
\mathfrak{z}$ for all $g\in G(\overline{K})$, and so
$\mathfrak{z}$ would be an ideal of the Lie algebra $\mathfrak{g}$ of
$G$ (i.e., it would be invariant under the action $\ad$ of the Lie bracket).
However, this is impossible, since a simple (or almost simple\footnote{
Meaning that $G$ has no {\em connected} normal algebraic subgroups other than itself and
the identity; this is the case for $G=\SL_2$.})
group of Lie type $G$ has a simple
Lie algebra (prove as in \cite[III.9.8, Prop.~27]{MR0573068}), i.e.,
an algebra $\mathfrak{g}$ without ideals other than itself and $(0)$.
Hence there is a $g_1$ such that $\mathfrak{z}$ and
$z_0 g_1 \mathfrak{z} g_1^{-1} z_0^{-1}$ are linearly independent. This means
that a determinant does not vanish at $g_1$; thus we see that $\mathfrak{z}$ and
$z_0 g_1 \mathfrak{z} g_1^{-1} z_0^{-1}$ are linearly independent for $g_1$
outside a subvariety $W$ that is a union of (a bounded number of) components
of positive codimension (and bounded degree). If $|K|$ is larger than a
constant, a simple bound on the number of points on varieties
(weaker than either Lang-Weil or Schwartz-Zippel; \cite[Lem. 1]{MR0065218}
is enough) shows that $W(K)\ne G(K)$;
we can assume that $|K|$ is larger than a constant, as otherwise the
statement of the proposition we are trying to show is trivial.
By escape from subvarieties
(Lem.~\ref{lem:esc}), it follows there is a $g_1\in A^k$, $k=O(1)$,
 lying outside $W$; let us fix one such $g_1$.

We also want 
\[(x_0 g_1 x_0 g_1^{-1}) g_2 \mathfrak{z} g_2^{-1} (x_0 g_1 x_0 g_1^{-1})^{-1},\]
obtained by deriving $\psi(z_0,z_0,z) \psi(z_0,z_0,z_0)^{-1}$ at $z=z_0$, 
to be linearly independent from $\mathfrak{z} + 
x_0 g_1 \mathfrak{z} g_1^{-1} x_0^{-1}$. This is done by exactly the same
argument: $\mathfrak{z} + 
x_0 g_1 \mathfrak{z} g_1^{-1} x_0^{-1}$ cannot be an ideal of $\mathfrak{g}$
(because there are none, other than $\mathfrak{g}$ and $(0)$) and so there
is a subvariety $W'\subsetneq G$ (the union of a bounded number
of components of positive codimension and bounded degree) such that the linear
independence we want does hold for $g_2$ outside $W'$.
Again by a bound on the number of points, followed by escape 
(Lem.~\ref{lem:esc}),
there is a $g_2\in A^{k'}$, $k'=O(1)$, lying outside $W'$.

Thus, we have $g_1,g_2\in A^{k'}$ 
such that $\psi$ has a non-degenerate derivative
at at least one point (namely, $(z_0,z_0,z_0)$). This means that $\psi$
has non-degenerate derivative outside a proper subvariety 
$W_0\subset Z\times Z\times Z$ (consisting of a bounded number of components
of bounded degree).
We finish exactly as in the proof of Prop.~\ref{prop:port}, using 
the same counting argument to conclude that
\[\psi(\{x\in X:
x\notin W_0(K)\}|) \gg |A\cap Z(K)|^3 - O(|A\cap Z(K)|^2\]
and so $|A\cap Z(K)|^3 \ll |A^k|^{1/3}$ for some $k=O(1)$.
Substituting this into (\ref{eq:korot}), we obtain that
\[|A\cap Y| \ll
2 + \sqrt{|A^2|\cdot |A^k|^{1/3}} \ll |A^k|^{2/3}\]
for $A$ non-empty, as desired. 
\end{proof}
\begin{Rem}
The proof of Prop.~\ref{prop:bantu} lets itself be generalized fairly
easily. In particular, note that we used no properties of $Z$ other than
the fact that it was an irreducible one-dimensional variety of dimension
$1$. Thus, we have actually shown that
\[|A\cap Z(K)|\ll |A^k|^{\frac{1}{\dim(G)}}\]
for any one-dimensional irreducible subvariety of a simple (or almost
simple)
group of Lie type $G$; the implied constant depends only on the degree 
of $Z$, the dimension of $G$ and the degree of the group operation in $G$
as a morphism. The main way in which the proof was easier than a full
proof of 
\[|A\cap V|\ll |A^k|^\frac{\dim(V)}{\dim(G)}\]
 is that, since we were dealing with
low-dimensional varieties, the inductive process was fairly simple, as
were some of the counting arguments. However, the basic idea of the general
inductive process is the same as here -- go down in dimension, keeping
the degree under control. It is not necessary for the maps used in the
proof to be roughly injective (like the map $\phi$ in (\ref{eq:koweo})) ,
as 
long as the preimage of a generic point is
a variety whose dimension is smaller than the dimension of the domain
(as is the case for the map $\phi$ in (\ref{eq:slavo})).
This means, in particular, that we need not try to make the Lie algebras
$\mathfrak{z}$, $g \mathfrak{z} g^{-1}$ be linearly independent -- it is
enough to ask that $g \mathfrak{z} g^{-1}$ not be equal to $\mathfrak{z}$
(clearly a weaker condition when $\dim(\mathfrak{z})>1$); we
get $g \mathfrak{z} g^{-1} \ne \mathfrak{z}$ easily by the same argument 
as in the proof of Prop.~\ref{prop:bantu}, using the simplicity of $G$.
\end{Rem}

\begin{center}
* * *
\end{center}
We can now use the orbit-stabilizer theorem for sets to convert the upper
bound given by Prop.~\ref{prop:bantu} into a lower bound for $|A^2\cap T(K)|$.
The idea is that having many elements of $A^2$ (or $A^k$)
of a special form (e.g., elements of $T(K)$) is extremely useful. 
\begin{cor}\label{cor:nessumo}
Let $G=\SL_2$, $K$ a field. Let $A\subset G(K)$ be a finite set with $A=A^{-1}$,
$e\in A$, $A\subset G(K)$.
 Let $g$ be a regular semisimple element of $A^l$, $l\geq 1$. Assume
$|A^3|\leq |A|^{1+\delta}$, $\delta>0$. Then
\[|A^2\cap C(g)|\gg |A|^{\frac{1}{3} - O(\delta)} ,\]
where $k$ and the implied constants depend only on $l$.
\end{cor}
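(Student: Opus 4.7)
The plan is to combine Lemma \ref{lem:lawve} (which gives a lower bound on the centralizer in terms of the conjugacy class) with Prop.~\ref{prop:bantu} (which gives an upper bound on the conjugacy class), then control the slack using the slow-growth hypothesis via (\ref{eq:jotor}).

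\smallskip

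First, since $g \in A^l$, Lemma \ref{lem:lawve} yields
\[
|A^2 \cap C(g)| \;\geq\; \frac{|A|}{|A^{l+2} \cap \Cl(g)|}.
\]
So it suffices to bound $|A^{l+2} \cap \Cl(g)|$ from above by $|A|^{2/3 + O(\delta)}$. Next, because $A = A^{-1}$ and $e \in A$, the set $A^{l+2}$ also satisfies $(A^{l+2})^{-1} = A^{l+2}$ and contains $e$. Since $g$ is regular semisimple, Prop.~\ref{prop:bantu} (applied with $A^{l+2}$ in place of $A$) gives an absolute constant $k_0$ such that
\[
|A^{l+2} \cap \Cl(g)| \;\ll\; \bigl|(A^{l+2})^{k_0}\bigr|^{2/3} \;=\; \bigl|A^{k_0(l+2)}\bigr|^{2/3}.
\]

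\smallskip

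Finally, I would invoke the Ruzsa-type tripling inequality (\ref{eq:jotor}): for $A = A^{-1}$ one has $|A^m|/|A| \leq (|A^3|/|A|)^{m-2}$ for every $m \geq 2$. Combined with the hypothesis $|A^3| \leq |A|^{1+\delta}$, this yields
\[
\bigl|A^{k_0(l+2)}\bigr| \;\leq\; |A|^{1 + \delta(k_0(l+2) - 2)} \;=\; |A|^{1 + O_l(\delta)}.
\]
Substituting into the previous display gives $|A^{l+2} \cap \Cl(g)| \ll |A|^{2/3 + O_l(\delta)}$, and then the orbit--stabilizer estimate above gives
\[
|A^2 \cap C(g)| \;\gg\; |A|^{1 - 2/3 - O_l(\delta)} \;=\; |A|^{1/3 - O_l(\delta)},
\]
with all implied constants depending only on $l$ (and the absolute constants in Prop.~\ref{prop:bantu} and (\ref{eq:jotor})).

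\smallskip

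There is no real obstacle here: every ingredient has already been prepared. The only subtlety is the bookkeeping, namely checking that the hypothesis $A = A^{-1}$, $e \in A$ transfers to $A^{l+2}$ (so that Prop.~\ref{prop:bantu} applies) and that (\ref{eq:jotor}) turns $|A^3| \leq |A|^{1+\delta}$ into the required bound on $|A^{k_0(l+2)}|$ with an exponent that is still $1 + O_l(\delta)$. Once that is in place, the dimensional upper bound of exponent $2/3$ (coming from $\dim \Cl(g)/\dim G = 2/3$) is converted into the lower bound of exponent $1 - 2/3 = 1/3$ on the centralizer, exactly as claimed.
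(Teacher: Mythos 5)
Your proof is correct and follows exactly the paper's route: orbit–stabilizer (Lemma~\ref{lem:lawve}) to reduce the centralizer bound to a conjugacy-class bound, Prop.~\ref{prop:bantu} applied to $A^{l+2}$ to get the $2/3$-exponent upper bound, and the Ruzsa tripling inequality (\ref{eq:jotor}) to absorb the power of $A$ into $|A|^{1+O_l(\delta)}$. The paper's version of the argument is terser (it writes $|A^{kl}|$ where you write $|A^{k_0(l+2)}|$, but this is just notation for a power of $A$ whose exponent depends on $l$), and your check that the hypotheses $A=A^{-1}$, $e\in A$ transfer to $A^{l+2}$ is a detail the paper leaves implicit.
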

This corresponds to \cite[Thm. 6.2]{LP}. The centralizer $C(g)$ is a maximal
torus $T$. Clearly, $1/3 = \dim(T)/\dim(G)$.
\begin{proof}
By Prop.~\ref{prop:bantu} and Lemma~\ref{lem:lawve} (orbit-stabilizer),
\[|A^2\cap C(g)|\gg \frac{A}{|A^{k l}|^{2/3}} .\]
The inequality $|A^{kl}|\ll |A|^{1+O(\delta)}$ follows from
$|A^3|\leq |A|^{1+\delta}$ and (\ref{eq:jotor}).
\end{proof}

This has been a centerpiece of proofs of growth in $\SL_2$ -- and
linear algebraic groups in general --
since \cite{Hel08}. At the same time, there is a precision to be made.
Proposition 4.1 in that paper proved Cor.~\ref{cor:nessumo} for 
{\em some} $g\in A$ (i.e., there exists a torus $T=C(g)$ with a large
intersection with $A^2$). The same proof given there shows that
this is true for {\em most} $g\in A$, but it does not prove it for {\em
  all} $g\in A$. This was, in retrospect, an important technical weakness.
It made the rest of the argument more indirect and harder
to generalize. This remained the case when Cor.~\ref{cor:nessumo} was
itself stated in greater generality in \cite[Prop.~5.8 and Cor.~5.10]{HeSL3}.
 
\subsection{High multiplicity and spectral gaps, I}\label{subs:highmult}
In order to supplement our main argument, we will need to be able to show that,
if $A$ is very large ($|A|>|G|^{1-\delta}$, $\delta>0$ small), then
$(A\cup A^{-1} \cup \{e\})^k = G$. (See the statement of Thm.~\ref{thm:main08}.)
This task is not particularly hard; 
in \cite{Hel08}, it was done ``by hand'', using a descent to a Borel subgroup
and results on large subsets of $\mathbb{F}_p$. As Nikolov and Pyber later
pointed out, one can obtain a stronger result (with $k=3$) in a way that
generalizes very easily. This requires a key concept -- that of high eigenvalue
multiplicity -- which will appear again in \S \ref{subs:majaro}.

\begin{prop}[Frobenius]\label{prop:frob}
Let $G = \SL_2(\mathbb{F}_q)$, $q=p^\alpha$, $p$ odd. Then every non-trivial
complex representation of $G$ has dimension at least $(q-1)/2$.
\end{prop}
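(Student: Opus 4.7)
The plan is to restrict a non-trivial irreducible representation $\rho$ of $G = \SL_2(\mathbb{F}_q)$ to the unipotent subgroup $U$ of upper-triangular unipotent matrices and exploit the conjugation action of the diagonal torus $T$ on the characters of $U$ that appear. Since complex representations of finite groups are semisimple, any non-trivial representation has a non-trivial irreducible subrepresentation of no greater dimension, so reducing to the irreducible case is harmless.

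First I would verify that $U \not\subset \ker\rho$. With $w = \left(\begin{smallmatrix}0&1\\-1&0\end{smallmatrix}\right)$ the Weyl element, $wUw^{-1} = U^{-}$ is the lower unipotent subgroup, and $\langle U, U^{-}\rangle = G$ is a standard generation statement. If $U \subset \ker\rho$, then normality forces $U^{-} \subset \ker\rho$, hence $\ker\rho = G$, contradicting non-triviality. Since $U \cong \mathbb{F}_q^{+}$ is abelian, $\rho|_U$ decomposes as a direct sum of characters and at least one non-trivial character $\chi$ must appear.

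Next I would analyse the $T$-action on $\widehat U$. The element $\mathrm{diag}(t,t^{-1}) \in T$ conjugates $\left(\begin{smallmatrix}1&x\\0&1\end{smallmatrix}\right)$ to $\left(\begin{smallmatrix}1&t^2 x\\0&1\end{smallmatrix}\right)$, so $T$ acts on $\widehat U \cong \mathbb{F}_q$ through the squaring map. The stabilizer of any non-trivial character is $\{t : t^2 = 1\} = \{\pm 1\}$, which has order $2$ since $p$ is odd; hence every non-trivial $T$-orbit in $\widehat U$ has size $(q-1)/2$.

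To conclude: for each $t \in T$, the operator $\rho(t)$ intertwines $\rho|_U$ with its twist by $\mathrm{conj}(t)$, so the multiset of $U$-characters occurring in $\rho|_U$ is $T$-stable with multiplicities constant on $T$-orbits. Therefore the full $T$-orbit of $\chi$ contributes to $\rho|_U$, and $\dim\rho \geq (q-1)/2$. There is no real obstacle here — this is a textbook Clifford-theoretic computation — but the hypothesis that $p$ is odd is essential in the stabilizer calculation: over characteristic $2$ the squaring map is a bijection on $\mathbb{F}_q^{*}$ and the orbit size would instead be $q-1$ (so the bound would actually be stronger, but the argument above needs cosmetic adjustment).
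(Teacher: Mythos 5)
Your argument is correct and complete, and it is genuinely different from what the paper does: the paper simply appeals to the known character table of $\SL_2(\mathbb{F}_q)$ (citing Fulton--Harris, Dornhoff, and Davidoff--Sarnak--Valette), whereas you give a self-contained proof. Your route is the classical Clifford-theory (indeed ``Frobenius''-style, fittingly) argument: restrict a non-trivial irreducible $\rho$ to the unipotent subgroup $U\cong \mathbb{F}_q^+$, observe that $U\not\subset\ker\rho$ because $U$ and $wUw^{-1}$ generate $G$ and $\ker\rho\trianglelefteq G$, pick a non-trivial character $\chi$ of $U$ occurring in $\rho|_U$, and use the conjugation action of $T\cong\mathbb{F}_q^\times$ on $\widehat U$ (through the squaring map, with stabilizer $\{\pm1\}$ of order $2$ when $p$ is odd) together with the fact that $\rho(t)$ carries the $\chi$-isotypic space to the $\chi^t$-isotypic space, so multiplicities are constant on the $T$-orbit of $\chi$, which has size $(q-1)/2$. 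Each step checks out. What your approach buys is self-containment and an explanation of where the $(q-1)/2$ really comes from; what the paper's approach buys is brevity and the option to read off the full list of degrees if one later needs the analogous statements for other Lie types, for which it similarly cites. Your closing remark about $p=2$ is also accurate: there the stabilizer is trivial and one even gets the stronger bound $q-1$, but since the proposition as stated assumes $p$ odd this is merely a side observation.
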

It is of course enough to show that every {\em irreducible} non-trivial
complex representation has dimension at least $(q-1)/2$.
\begin{proof}
By, e.g., the character tables in \cite[\S 5.2]{fultonharris}.
See also the standard reference 
\cite{MR0360852} or the exposition in \cite[\S 3.5]{MR1989434} (for
$G = \PSL_2(\mathbb{F}_p)$, $p$ prime).
\end{proof}
There are analogues of Prop.~\ref{prop:frob} for all finite
simple groups of Lie type.

Now consider a Cayley graph $\Gamma(G,A)$, where $A$ generates $G$ and
$A = A^{-1}$; we recall that
this is defined to be the graph having $G$ as
its set of vertices and $\{(g,ag): g\in G, a\in A\}$ as its set
of edges. We recall that 
the normalized {\em adjacency matrix} $\mathscr{A}$ is a linear operator
on the space $L$ of complex-valued functions on $G$: it is defined by
\begin{equation}\label{eq:hoko}
(\mathscr{A} f)(g) = \frac{1}{|A|} \sum_{a\in A} f(a g).\end{equation}
 Since $A$ is symmetric, $\mathscr{A}$
 has a full real spectrum
\begin{equation}\label{eq:trout}
\dotsc \leq \lambda_2 \leq \lambda_1\leq \lambda_0 = 1\end{equation}
with orthogonal eigenvectors $v_j$; the eigenvector $v_0$ corresponding to
the highest eigenvalue $\lambda_0$ is just a constant function. 

We can see from the definition (\ref{eq:hoko}) that every eigenspace of 
$\mathscr{A}$ is invariant under the action of $G$ on the right; in other
words, it is a representation -- and it can be trivial only for the eigenspace
consisting of constant functions, i.e., the eigenspace associated to
 $\lambda_0$. Hence, by Prop.~\ref{prop:frob}, every eigenvalue $\lambda_j$,
$j>0$, has multiplicity at least $(q-1)/2$.

The idea now is to use this high multiplicity to show a spectral gap,
i.e., a non-trivial upper bound for $\lambda_1$. Let us follow
\cite{MR2410393}, which shows that this is not hard for $A$ large.
The trace of $\mathscr{A}^2$ can be written in two ways: on one hand, it
is $1/|A|^2$ times the
 number of length-$2$ paths whose head equals their tail, and, on
the other, it equals a sum of squares of eigenvalues. In other words,
\[\frac{|G| |A|}{|A|^2} = \sum_j \lambda_j^2 \geq \frac{q-1}{2} \lambda_j^2\]
for every $j\geq 1$, and so
\[|\lambda_j| \leq \sqrt{\frac{|G|/|A|}{\frac{q-1}{2}}}.\]
If $A$ is large enough (close to $|G|$ in size), 
this is much smaller than $\lambda_0 = 1$.
This means that a few applications of $\mathscr{A}$ ``uniformize'' any
distribution very quickly, in that anything orthogonal to a constant function
gets multiplied by $\lambda_1<1$ (or less) repeatedly.
The proof of the following result is based on this idea.

\begin{prop}[\cite{MR2410393} and \cite{MR2800484}]\label{prop:diplo}
Let $G = \SL_2(\mathbb{F}_q)$, $q$ an odd prime power. Let $A\subset G$,
$A = A^{-1}$. Assume $|A|\geq 2 |G|^{8/9}$. Then
\[A^3 = G.\]
\end{prop}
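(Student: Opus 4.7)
The plan is to show that $A\cdot A\cdot A$ hits every element of $G$ by a Fourier/spectral argument, using the high-multiplicity bound on $\lambda_1$ already sketched before the statement. All convolutions below are taken in the group algebra $\Com[G]$, with $(f\ast g)(x)=\sum_y f(y)g(y^{-1}x)$.

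Set $\alpha=|A|/|G|$ and decompose the characteristic function as $1_A=\alpha\mathbf{1}+\phi$, where $\mathbf{1}$ is the constant function $1$ and $\phi$ satisfies $\sum_{g\in G}\phi(g)=0$, i.e.\ $\phi$ is orthogonal to constants. Since $\mathbf{1}\ast f=(\sum f)\mathbf{1}$, every triple convolution that contains at least one $\phi$-factor \emph{and} at least one $\mathbf{1}$-factor vanishes. A short expansion then gives
\[
1_A\ast 1_A\ast 1_A \;=\; \alpha^{3}\,\mathbf{1}^{\ast 3} \;+\; \phi^{\ast 3}
\;=\; \frac{|A|^{3}}{|G|}\,\mathbf{1} \;+\; \phi^{\ast 3}.
\]
Since $(1_A\ast 1_A\ast 1_A)(g)$ counts representations $g=a_1a_2a_3$ with $a_i\in A$, to conclude $A^3=G$ it is enough to prove the pointwise bound
\[
\|\phi^{\ast 3}\|_{\infty}\;<\;\frac{|A|^{3}}{|G|}.
\]

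For the left side, Cauchy--Schwarz gives $\|f\ast g\|_{\infty}\le \|f\|_{2}\|g\|_{2}$, and the convolution operator $T_\phi:f\mapsto \phi\ast f$ preserves $\mathbf{1}^{\perp}$ and coincides there with $f\mapsto 1_A\ast f$. Writing this last operator as $|A|\,\mathscr{A}$ (cf.\ (\ref{eq:hoko}) and the discussion after it), and recalling $A=A^{-1}$, we obtain
\[
\|\phi^{\ast 3}\|_{\infty}\;\le\;\|\phi\|_{2}\,\|\phi^{\ast 2}\|_{2}\;\le\;\|\phi\|_{2}^{2}\,\|T_\phi\|_{\mathrm{op}}\;=\;\|\phi\|_{2}^{2}\cdot|A|\,\max_{j\ge 1}|\lambda_j|.
\]
Here $\|\phi\|_{2}^{2}=|A|-|A|^{2}/|G|\le|A|$.

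For the eigenvalue bound, I will use exactly the high-multiplicity calculation presented just before the statement: Prop.~\ref{prop:frob} says every non-trivial eigenspace of $\mathscr{A}$ (being a non-trivial $G$-representation in its right action) has dimension $\ge(q-1)/2$, while the trace identity $\sum_{j}\lambda_j^{2}=\mathrm{tr}(\mathscr{A}^{2})=|G|/|A|$ (computed by counting closed length-$2$ walks and using $A=A^{-1}$) gives
\[
\max_{j\ge 1}|\lambda_j|^{2}\;\le\;\frac{2|G|}{(q-1)|A|}.
\]
Plugging everything in,
\[
\|\phi^{\ast 3}\|_{\infty}\;\le\;|A|^{2}\sqrt{\frac{2|G|}{(q-1)|A|}}\;=\;\sqrt{\frac{2|G|\,|A|^{3}}{q-1}},
\]
so the desired inequality $\|\phi^{\ast 3}\|_{\infty}<|A|^{3}/|G|$ reduces to
\[
|A|^{3}\;>\;\frac{2|G|^{3}}{q-1}.
\]
Since $|G|=q(q-1)(q+1)\le 2(q-1)^{3}$ for $q\ge 3$ (so $|G|^{1/9}\ll (q-1)^{1/3}$), the hypothesis $|A|\ge 2|G|^{8/9}$ is comfortably stronger than this, and the proof is complete.

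The only delicate point — really the only step that is not purely formal — is the eigenvalue estimate: it is crucial that the non-trivial eigenspaces of $\mathscr{A}$ carry non-trivial $G$-representations, so that Prop.~\ref{prop:frob} forces them to have dimension $\ge (q-1)/2$. Everything else is bookkeeping: decomposing $1_A$ against the constants, killing mixed terms using $\mathbf{1}\ast\phi=0$, and the two standard convolution inequalities $\|f\ast g\|_{\infty}\le\|f\|_{2}\|g\|_{2}$ and $\|\phi\ast f\|_{2}\le\|T_\phi\|_{\mathrm{op}}\|f\|_{2}$.
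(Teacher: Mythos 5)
Your argument is correct and is essentially the paper's own proof rephrased: the paper expands $\langle\mathscr{A}1_A,1_{gA^{-1}}\rangle$ in eigenvectors and shows it cannot vanish when $g\notin A^3$, while you directly bound $\|\phi^{\ast 3}\|_\infty$ after splitting $1_A=\alpha\mathbf{1}+\phi$; both rest on exactly the same ingredients — Prop.~\ref{prop:frob} (high eigenvalue multiplicity), the trace identity $\tr(\mathscr{A}^2)=|G|/|A|$, and Cauchy--Schwarz — so this is a notational repackaging rather than a new route. One small arithmetic slip at the end: $|G|=q(q-1)(q+1)\le 2(q-1)^3$ is false for $q=3$ (there $|G|=24>16$); but the inequality you actually need, $4(q-1)>|G|^{1/3}$, i.e.\ $64(q-1)^3>|G|$, does hold for every odd prime power $q\ge 3$ since $q\le 2(q-1)$ and $q+1\le 2(q-1)$ give $|G|\le 4(q-1)^3$, so the conclusion stands.
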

Neither \cite{MR2410393} nor \cite{MR2800484} require $A=A^{-1}$; we
are assuming it for simplicity.
\begin{proof}
Suppose there is a $g\in G$ such that $g\notin A^3$. Then the inner
product $\langle \mathscr{A} 1_A, 1_{g A^{-1}}\rangle$ equals $0$.
We can assume that eigenvectors $v_j$ have $\ell_2$-norm $1$ (relative to
the counting measure on $G$, say). Then
\begin{equation}\label{eq:keki}\begin{aligned}
&\langle \mathscr{A} 1_A, 1_{g A^{-1}}\rangle =
\lambda_0 \langle 1_A,v_0\rangle \langle v_0, 1_{g A^{-1}}\rangle +
\sum_{j\geq 1} \lambda_j \langle 1_A, v_j\rangle \langle v_j, 1_{g A^{-1}}\rangle\\
&=  |A|\cdot \left(\frac{|A|}{\sqrt{|G|}}\right)^2 + 
O^*\left(\sqrt{\frac{2 |G| |A|}{q-1}}
\sqrt{\sum_{j\geq 1}  |\langle 1_A, v_j\rangle|^2}
\sqrt{\sum_{j\geq 1} |\langle v_j, 1_{g A^{-1}}\rangle|^2}\right)\\
&=  \frac{|A|^3}{|G|} + 
O^*\left(\sqrt{\frac{2 |G| |A|}{q-1}} |1_A|_2 |1_{g A^{-1}}|_2\right)
=  \frac{|A|^3}{|G|} + O^*\left(\sqrt{\frac{2|G| |A|}{q-1}} |A|\right) .
\end{aligned}\end{equation}
By $|G|= (q^2-q)q$, however, $|A|\geq 2 |G|^{8/9}$ implies
$|A|^3/|G| > \sqrt{2 |G| |A|/(q-1)} |A|$, and so (\ref{eq:keki})
means that 
$\langle \mathscr{A} 1_A, 1_{g A^{-1}}\rangle$ cannot be $0$.
\end{proof}



\subsection{Growth in $\SL_2(\mathbb{F}_q)$}\label{subs:pivoro}
We finally come to the proof of Thm.~\ref{thm:main08}. This is a ``modern'' 
proof, without any reliance on the sum-product theorem, and with a fairly
straightforward generalization to higher-rank groups. This part is a little
closer to \cite{PS} 
than to the first version
of \cite{BGT}.  Note the parallels with Prop.~\ref{prop:jutor} (pivoting).

We will actually be able to prove Thm.~\ref{thm:main08} over a general
finite field $\mathbb{F}_q$, and not just over $\mathbb{F}_p$. The statement
is as follows.
\begin{thm}
Let $G = \SL_2(\mathbb{F}_q)$. Let $A\subset G$ generate $G$. Then either
\[|A^3|\geq |A|^{1+\delta}\]
or $(A \cup A^{-1}\cup \{e\})^k = G$, where $\delta>0$ and 
$k\geq 1$ are absolute constants.
\end{thm}
\begin{proof}
By (\ref{eq:mony}), we can assume that $A=A^{-1}$ and
$e\in A$ without loss of generality. We can also assume $A$ to be larger 
than an absolute constant, as otherwise $|A^2| \geq |A|+1$ gives us
$|A^3|\geq |A|^{1+\delta}$ trivially. (If $|A^2|\leq |A|$, then
$A^2 \supset A\cdot e = A$ implies $A^2 = A$, and, since 
$A$ generates $G$, $A \cdot A = A$ implies that $A = G$.)

Assume $|A^3|<|A|^{1+\delta}$, where $\delta>0$ will be set later.
By Lemma \ref{lem:esc} (escape), there is an element $g_0\in A^2$ that is
regular semisimple (that is, $\tr(g_0)\ne \pm 2$). Its centralizer
$T = C_G(g_0)$ is a maximal torus.

Call $\xi \in G$ a {\em pivot} if the function 
$\phi_\xi:A\times T \to G$
defined by
\begin{equation}\label{eq:zazu}(a,t)\mapsto a \xi t \xi^{-1}\end{equation}
is injective when considered as a function from $A/\{\pm e\}
\times T/\{\pm e\}$ to
$G/\{\pm e\}$. (The analogy with the proof of Prop.~\ref{prop:jutor} is 
deliberate.)

{\em Case (a): There is a pivot $\xi$ in $A$.} Since $T = C(g_0)$,
 Cor.~\ref{cor:nessumo} (together with $|A^k|\leq |A|^{1+O(\delta)}$)
gives us that there are $\gg |A|^{1/3-O(\delta)}$ elements of $T$ in 
$A^2$. Hence, by the injectivity of $\phi_\xi$,
\[\left|\phi_\xi(A,A^2\cap T)\right| \geq \frac{1}{4} |A| |A^2\cap T|
 \gg |A| |A|^{\frac{1}{3} - O(\delta)}
= |A|^{\frac{4}{3} - O(\delta)}.\]
At the same time, $\phi_x(A,A^2\cap T) \subset A^5$, and so
\[|A^5|\gg |A|^{4/3-O(\delta)}.\]
For $\delta$ smaller than a positive constant,
this gives a contradiction
to $|A^3|<|A|^{1+\delta}$ by Ruzsa's inequality (\ref{eq:jotor}).
(Recall that we can assume that $|A|$ is larger than an absolute constant.)

{\em Case (b): There are no pivots $\xi$ in G.} Then, for every $\xi\in G$,
there are 
$a_1,a_2\in A$, $t_1,t_2\in T$, $(a_1,t_1)\ne (\pm a_2,\pm t_2)$ such that
$a_1 \xi t_1 \xi^{-1} = \pm e\cdot  a_2 \xi t_2 \xi^{-1}$, i.e.
\[a_2^{-1} a_1 =  \pm e \cdot \xi t_2 t_1^{-1} \xi^{-1}.\]
In other words,
\begin{equation}\label{eq:rojo}
A^{-1} A \cap \xi T \xi^{-1} \ne \{\pm e\}\end{equation}
for every $\xi\in G$.

Choose any $g\in \xi T \xi^{-1}$ with $g \ne \pm e$. Then $g$ is regular
semisimple and its centralizer $C(g)$ equals $\xi T \xi^{-1}$. (This is particular to $\SL_2$;
see the comments after the proof.) Thus, by Cor.~\ref{cor:nessumo},
we obtain that there are $\geq c |A|^{1/3 - O(\delta)}$ elements of 
$\xi T \xi^{-1}$ in $A^k$, where $k$ and $c>0$ are absolute. 
 This is valid for every conjugate
$\xi T \xi^{-1}$ of $T$ with $\xi \in G$. At least $(1/2) |G|/|T|$
 maximal tori of $G$ are of the form $\xi T \xi^{-1}$,
$\xi \in G$. Hence 
\begin{equation}\label{eq:abar}
|A^k| \geq \frac{1}{2} \frac{|G|}{|T|} (c |A|^{1/3-O(\delta)} - 2) \gg 
p^2 |A|^{1/3-O(\delta)}.\end{equation}
(Since any element of $G$ other than $\pm e$ can lie on at most one maximal
torus, there is no double counting.)

From (\ref{eq:abar}) it follows immediately that either 
$|A^3|\geq |A|^{1+\delta}$ (use (\ref{eq:jotor})) or $A\geq |G|^{1-O(\delta)}$.
In the latter case, Prop.~\ref{prop:diplo} implies that $A^3 = G$.

{\em Case (c): There are pivots and non-pivots in $G$.} Since
$\langle A\rangle = G$, this implies that there is
a $\xi\in G$, not a pivot, and an $a\in A$ such that $a \xi$ is a pivot. 
Since $\xi$ is not a pivot, we obtain (\ref{eq:rojo}), 
and so there are $\geq |A|^{1/3-O(\delta)}$ elements of $\xi T \xi^{-1}$
in $A^k$, just as before.

At the same time, $a\xi$ is a pivot, i.e., the map $\phi_{a \xi}$ (defined in 
(\ref{eq:zazu})) is injective. Hence
\[\left|\phi_{a \xi}(A, \xi^{-1} (A^k \cap \xi T \xi^{-1}) \xi)\right| \geq \frac{1}{4}
|A| |A^k \cap \xi T \xi^{-1}| \geq \frac{1}{4} |A|^{\frac{4}{3} - O(\delta)}.\]
Since $\phi_{a \xi}(A, \xi^{-1} (A^k \cap \xi T \xi^{-1}) \xi)y \subset A^{k+3}$,
it follows that \begin{equation}\label{eq:matameri}
|A^{k+3}|\geq \frac{1}{4} |A|^{4/3 - O(\delta)}.\end{equation}
We set $\delta$ small enough for Ruzsa's inequality (\ref{eq:jotor})
to imply that (\ref{eq:matameri}) contradicts $|A^3|\leq |A|^{1+\delta}$.
\end{proof}

One apparent obstacle to a generalization here is the fact that, in
higher-rank groups (e.g. $\SL_n$, $n\geq 3$), the centralizer $C(g)$
of an element $g\ne \pm e$ of a torus $T$ is not necessarily equal to $T$;
we have $C(g)=T$ only when $g$ is regular. This obstacle is not serious
here, as the number of non-regular elements of $A$ on a torus is
small by a dimensional bound; this is already in \cite[\S 5.5]{HeSL3}.
The difficulty in generalizing Thm.~\ref{thm:main08} to higher-rank
groups (\cite{HeSL3}, \cite{GH1}) resided, in retrospect, in the fact
that the version of Cor.~\ref{cor:nessumo} in \cite[\S 4]{Hel08} and
\cite[Cor.~5.10]{HeSL3} was slightly weaker,
as discussed before. This made the pivoting argument more complicated
and indirect, and thus harder to generalize; in particular, the sum-product
theorem was still used, in spite of the attempts to gain independence from
it in \cite[\S 3]{HeSL3}. These issues were resolved in both
\cite{PS} and \cite{BGT}. 

As pointed out in \cite{BGT}, Thm.~\ref{thm:main08} actually implies
the sum-product theorem; however, it is arguably more natural to deduce
the sum-product theorem, or Prop.~\ref{prop:musoff},   
from growth in the affine group (Prop.~\ref{prop:jutor}); multiplication
and addition correspond to two different group actions. See \S \ref{sec:sump}.

Let us now go briefly over the successive generalizations
of Thm.~\ref{thm:main08} up to now.
This will not just be a way to mention a series
of contributions by different people in the field -- it will also
be a good opportunity to make a few additional points on the internal
structure of the subject.

There are two main kinds of generalizations: changing the field and 
changing the Lie type.

{\bf Changing the field.} 
Dinai \cite{MR2788087} showed that Thm.~\ref{thm:main08} holds 
with $\mathbb{F}_p$ changed to a general finite field $\mathbb{F}_q$;
at the time,
this involved some careful work involving the subgroup structure of
$\SL_2(\mathbb{F}_q)$.
Varj\'u \cite{MR2862040} gave a second, simpler proof of the same 
generalization.

The proof in \cite{Hel08}
easily gives that Thm.~\ref{thm:main08} still holds if 
$\mathbb{F}_p$ is changed to $\mathbb{C}$ and $A$ is taken to be 
finite. In fact, in that case, there is a predecessor: Elekes 
and Kir{\'a}ly proved \cite{MR1869409} 
a result corresponding to Thm.~\ref{thm:main08}
with $\mathbb{R}$ instead of $\mathbb{F}_p$, with unspecified growth
bounds.
In general, in arithmetic combinatorics, results over $\mathbb{R}$ or 
$\mathbb{C}$ are more
accessible than results over $\mathbb{F}_p$: 
$\mathbb{R}$ has an ordering and a topology that a
general field, or $\mathbb{F}_p$ in particular, does not have.
See the discussion on the sum-product theorem at the beginning
of \S \ref{sec:sump} for a relevant instance of this. 

Equally important is the fact that the new growth bounds on non-commutative
groups are quantitatively strong ($|A^3|\geq |A|^{1+\delta}$).
Of course, one can take advantage of the structure of $\mathbb{R}$
or $\mathbb{C}$ and also give quantitatively strong estimates. A case in
point is \cite{MR2398145}, which gives (a) a simplified proof over
$\mathbb{C}$ (based on \cite{Hel08}) and (b) an early attack on $\SL_3$.




It turns out that, for applications, one needs a stronger generalization
to groups over $\mathbb{C}$: a proof of expansion requires results for
measures, and not just for sets $A$. If we are working over a finite field,
then results of type $|A^3|\geq |A|^{1+\delta}$ for finite sets $A$
and results of type $|\mu\ast \mu|_2 \leq |\mu|^{1+\delta}$ for measures
$\mu$ on $G$ (with $|\mu|_1=1$) are equivalent (as was already pointed out in
\cite{MR2415383}); however, over $\mathbb{R}$ or $\mathbb{C}$, results
for measures $\mu$ are stronger than results for sets $A$. It 
 is in the context of measures that the new ideas we are now discussing
become as crucial over infinite fields as they are over finite fields.

The first result for measures was proven
in \cite{BGSU2} for $\SU(2)$, which has the same Lie 
algebra type as $\SL_2$: the main idea is to redo the proof in \cite{Hel08},
still for finite sets, but keeping track of distances (e.g.,
where \cite{Hel08} uses that a map is injective, \cite{BGSU2} also checks that
the map does not shrink distances by more than a constant factor).

{\bf Changing the Lie type.} 
The generalization of Thm.\ \ref{thm:main08}
to $\SL_3(\mathbb{F}_p)$ \cite{HeSL3} was neither easy nor limited to
$\SL_3$ alone; it involved general work
on tori, conjugacy classes and slowly growing sets.
It was also then that
ideas from sum-product theorems ({\em pivoting}; see \S \ref{sec:corro}) 
were taken to the context of actions of groups on groups. 

Part of
the generalization to $\SL_n$ was carried out in \cite{GH1}, but, 
for instance, $\SO_n$ ($n$ large) resisted (and was an obstruction to a full
solution for $\SL_n$). Full and elegant generalizations to all finite
simple groups of Lie type (with bounds depending on the rank) were given by
Pyber and Szab\'o \cite{PS} and, independently, by Breuillard, Green
and Tao \cite{BGT}; this, of course, covered the classical groups
$\SL_n(\mathbb{F}_q)$, $\SO_n(\mathbb{F}_q)$, $\Sp_{2n}(\mathbb{F}_q)$,
with $\delta>0$ depending on $n$. (The issue of the dependence on $n$ is
important; we will discuss it in some detail later.)

The case in some sense opposite to that of simple groups is that of solvable
groups. We went over the case of a small but paradigmatic solvable
group in detail -- the affine group of $\mathbb{P}^1$ (\S \ref{subs:pivo}).
The general case of solvable subgroups of $\SL_n(\mathbb{F}_p)$ is treated
in \cite{GH2}. A clean generalization of \cite{GH2}
 to $\mathbb{F}_q$ still remains to
be done.

\subsection{High multiplicity and spectral gaps, II}\label{subs:majaro}
Now that we have proven  the main theorem (Thm.~\ref{thm:main08}), 
we may as well finish
our account of growth in linear groups by going briefly over the proof
of Thm.~\ref{thm:bg} (Bourgain-Gamburd), which gives us expanders.
We will keep an eye on how the proof (from \cite{MR2415383}) can be adapted
to general $G$.

In \S \ref{subs:hinto}, we said a pair $(G,A_G)$ gives an $\epsilon$-expander
if every $S\subset G$ with $|S|\leq |G|/2$ satisfies $|S\cup A_G S|\geq 
(1+\epsilon) |S|$. We also gave an alternative definition, based on eigenvalues;
phrased in terms of the normalized adjacency matrix 
$\mathscr{A}$ on the Cayley graph $\Gamma(G,A)$ (see (\ref{eq:hoko})), 
it states that
a graph is an $\epsilon$-expander if
the second largest eigenvalue $\lambda_1$ of $\mathscr{A}$
is at most $1-\epsilon$. Being an $\epsilon$-expander according to the
second definition implies being an $(\epsilon/2)$-expander according to the
first definition:
if $|S\cup A_G S|< (1+\epsilon/2) |S|$,
then $f:G\to \mathbb{C}$ defined by $f(x) = 1_S - |S|/|G|$ would obey
$\langle \mathscr{A} f,f\rangle> (1- \epsilon) \langle f,f\rangle$
and $\langle f, 1_G\rangle = 0$, a contradiction if 
$\lambda_1 \leq 1- \epsilon$. The two definitions are, in fact, equivalent for $|A|$ bounded; if the pair $(G,A_G)$ gives us an $\epsilon$-expander in the
first sense,
then the second largest eigenvalue $\lambda_1$ of $\mathscr{A}$ on
$\Gamma(G,A_G)$ satisfies $\lambda_1 \leq 1 - \epsilon^2/2 |A|^2$ (see, e.g.,
\cite[Thm.~1.2.3]{MR1989434} or \cite[\S 13.3.2]{MR2466937}).

We will now prove Thm.~\ref{thm:bg}, which states 
that, for $A\subset \SL_2(\mathbb{Z})$, $\langle A\rangle$ Zariski-dense,
$\{\Gamma(\SL_2(\mathbb{Z}/p\mathbb{Z}),A \mo p)\}_{\text{$p$ prime}}$ 
is a family of expander graphs.
We will be using the second definition, that is, we will show that 
$\lambda_1 \geq 1-\epsilon$, where $\epsilon>0$ depends only on $A$, not on $p$.


We recall at this point what was known before on expander families in
$\SL_2(\mathbb{Z}/p\mathbb{Z})$, even though we will not use those previous
results. As we said in \S \ref{subs:hinto}, it was known that the existence of 
the Selberg spectral gap \cite{MR0182610} on the surface $\Gamma(N)
\backslash \mathbb{H}$ implies that all pairs $(\SL_2(\mathbb{Z}/p\mathbb{Z}),
A \mo p)$ with
\begin{equation}
A = \left\{ \left(\begin{array}{cc} 1 &1\\0 &1\end{array}\right),
\left(\begin{array}{cc} 1 &0\\1 &1\end{array}\right)
\right\}\end{equation}
are $\epsilon$-expanders for some
fixed $\epsilon>0$; the same holds for all other
$A\subset \SL_2(\mathbb{Z})$ such that $\langle A\rangle$ is of finite index in
$\SL_2(\mathbb{Z})$.
The implication is not, from today's perspective,
particularly difficult, but tracing the first time it was remarked is 
non-trivial; what follows is a preliminary attempt at a one-paragraph history of the implication, since this
seems to be a matter of general interest in the field.

The implication was shown for compact quotients of $\mathbb{H}$ in Buser's work 
\cite{MR505920}.
See also Brooks
\cite{MR840402} (still for the compact case) and \cite{MR894565}
(non-compact case, in terms of the {\em Kazhdan constant}) and Burger
\cite{Burger} (compact case, in terms of eigenvalues). 
What is a little harder to pinpoint is the first proof for the non-compact
case (in particular, $\Gamma(N)\backslash \mathbb{H}$)
 in terms of eigenvalues. (At least some proofs for the compact case do
generalize to the non-compact case -- see, e.g., \cite[App.~A]{MR2922374},
based on Burger's approach -- but this seems not to have been obvious at first.)
A. Lubotzky and P. Sarnak
(private communication) state that the work leading to \cite{MR963118} 
was
originally centered on $\SL(\mathbb{Z}/p\mathbb{Z})$ and
$\Gamma(N) \backslash \mathbb{H}$ and, in particular, showed the 
correspondence in this (non-compact) case.
Thanks are due to them and to E. Kowalski for several references.

Let us go back to the proof of Thm.~\ref{thm:bg}, which is a much more
general statement, as it works for $A\subset \SL_2(\mathbb{Z})$ essentially
arbitrary. (We recall that the only condition is that $\langle A\rangle$
be Zariski-dense in $\SL_2(\mathbb{Z})$; this is roughly what is necessary
for $A \mo p$ to generate $\SL_2(\mathbb{Z}/p\mathbb{Z})$ in the first place.) 

Let us first sketch, as we promised, a proof of the weaker statement
that $G_p$ has logarithmic diameter $O_A(\log |G|)$ with respect to $A_p$.
Let us assume, for simplicity, that $A$ generates a free group, i.e.,
any two products
$a_1 \dotsb a_k$, $a_1' \dotsb a_{k'}'$ of elements of $A\cup A^{-1}$
are distinct, unless they are equal for the trivial reason of having
the same reduction (e.g., $x x^{-1} y z = y w^{-1} w z$, since both reduce to
$y z$ purely formally). (The condition that $A$ generates a free group
is fulfilled both by (\ref{eq:hosen}) and by (\ref{eq:pantalon}); in general,
the argument works because, by the Tits alternative, a subset of $\SL_n$
generating a Zariski-dense group must contain
a pair of elements generating a free group. For $n=2$, as we will soon see,
the Tits alternative is easy to prove by the use of the free group
$\Gamma(2)$.)

If $k$ and $k'$ are $< \log_c(p/2)$ (where
$c = 2\cdot 3 = 6$ for $A$ is in (\ref{eq:pantalon}), then 
$a_1 \dotsb a_k \neq a_1' \dotsb a_{k'}'$ implies
$a_1 \dotsb a_k \not\equiv a_1' \dotsb a_{k'}' \mo p$, simply because
the entries of the matrices involved are $<p/2$ in absolute value, and
thus cannot be congruent without being equal.
Hence, for 
$k = \lfloor \log_c p/2\rfloor$, $|(A_p \cup A_p^{-1} \cup e)^k|$ is
already quite large ($\geq (2|A|-1)^k \geq p^{\delta_c}$, $\delta_c\gg
0$).\footnote{This bound on growth in the free group is trivial:
given a word ending in, say, $x$, we can choose to prolong it by any
element
of $A\cup A^{-1}$ other than $x^{-1}$. Note, however, that obtaining
a result like Theorem~\ref{thm:main08} for the free group is far from
trivial; Theorem~\ref{thm:main08} (and \cite{MR2398145}) imply such a result,
but the first direct proof is due to Razborov \cite{MR3152939}, who proved
a strong bound $|A^3|\geq |A|^2/(\log |A|)^{O(1)}$ for any finite
subset $A$ of a free group on at least two elements.}
 We can then apply
Theorem \ref{thm:main08} a bounded number of times, and conclude that
the diameter of $G_p$ with respect to $A_p$ is in fact $\ll \log p
\ll \log |G|$, i.e., logarithmic.


As we will now see, the proof of Bourgain and Gamburd's expansion theorem
is considerably more difficult.
\begin{proof}[Proof of Thm.~\ref{thm:bg}]
Let $S = A \cup A^{-1} \cup \{e\}$. Let $G=\SL_2$. Let $\mu$ be the measure on
$G_p = \SL_2(\mathbb{Z}/p\mathbb{Z})$ given by
\[\mu(x) = \begin{cases} \frac{1}{|S|} &\text{if $x\in S$,}\\
0 &\text{otherwise.}\end{cases}\]

We consider the convolutions $\mu^{(k)} = \mu \ast \mu \ast \dotsc 
\ast \mu$. We will see how $|\mu^{(k)}|_2$ decreases as $k$ increases.
This happens very quickly at first ({\em stage 1}). It then goes
on happening quickly enough ({\em stage 2}), thanks to Thm.~\ref{thm:main08}
(applied via Prop.~\ref{prop:flatlem}, the Bourgain-Gamburd
``flattening lemma''). Once $|\mu^{(k)}|_2$ is quite small (not much larger
than $1/|G_p|$, which is the least it could be), the proof can be finished
off by an argument from \cite{SarnakXue}, based on the same
high-multiplicity phenomenon that was exploited in \S \ref{subs:highmult}.

{\em Stage 0: Reduction to $\langle A\rangle$ free.} 
For $G=\SL_2$, we can (as in \cite{MR2415383}) define
$H = \Gamma(2) = \{g\in G(\mathbb{Z}): g\equiv I \mo 2\}$; now,
 $H$ is both free and of finite index in $G(\mathbb{Z})$; hence
$\langle A\rangle \cap H$ is free (since, by the Nielsen-Schreier theorem, every subgroup
of a free group is free), Zariski-dense, and generated by a set $A'\subset \langle A\rangle$
of bounded size (Schreier generators). We can thus replace $A$ by $A'$
(at the cost of at most a constant factor -- depending on $A$ and $A'$ --
in the final bounds),
and assume from now on that $\langle A\rangle$ is free.

(For general $G$, the task is much more delicate,
since such a convenient $H$ does not in general exist, and also because the
``concentration in subgroups'' issue we will discuss below requires
stronger inputs to be addressed successfully -- Zariski density no longer
seems enough (given current methods). See \cite{SGV} for a general solution. 
An approach via products of random matrices is also possible
\cite{MR2443926}, \cite{MR2538500}.)

{\em Stage 1.} {\em Decrease in $|\mu^{(k)}|_2$ for $k \ll \log |G|$, due
to freedom.}
We can now assume that $\langle A\rangle$ is a free group on
$r\geq 2$ elements. By the argument we went over in the introduction
(shortly before the statement of Thm.~\ref{thm:bg}), there is
a constant $c$ depending only on $A$ such that two words on $A$ of length 
$k\leq c \log p$ reduce to the same element of $G(\mathbb{Z}/p\mathbb{Z})$
only if they give the same element of $G(\mathbb{Z})$; since $\langle A\rangle$
is free, this can happen only if they have the same reduction
(e.g., $x x^{-1} y z = x w^{-1} w z$). 
Thus, for instance, 
\[\begin{aligned}
\mu^{(k)}(e) 
= \frac{|\text{words of length $k$ reducing to the identity}|}{
r^{k}},\end{aligned}\]
where $\mu^{(k)} = \mu \ast \mu \ast \dotsb \ast \mu$ ($k$ times).
Hence, Kesten's bound on the number of words of given length reducing to
the identity \cite{MR0109367}
 gives us that, for any $\epsilon>0$,
\[\mu^{k}(e) \ll_\epsilon
\left(\sqrt{\frac{2r-1}{r}}+\epsilon\right)^{k},\]
and so, for $k = \lfloor c \log p\rfloor$,
\[\mu^{(k)}(e) \ll \frac{1}{p^{\eta}},\]
where $\eta>0$ depends only on $c$, and thus only on $A$.
(Note that, if $r\geq 3$, the simple bound
\cite[Lem.~2]{BroSha} can be used instead of Kesten's result.)

It turns out that, using the fact that $\langle A\rangle$ is free,
we can show not just that $\mu^{(k)}(e)$ is small, but that $\mu^{(k)}(G')$ is 
small for any proper subgroup $G'$ of $G$. For $G=\SL_2$, this is
relatively straight-forward:
every proper subgroup $G'$ of 
$G_p=\SL_2(\mathbb{Z}/p\mathbb{Z})$ is almost solvable, i.e., contains
a solvable subgroup $G''$ of bounded index. It is enough to show that
$\mu^{(2k)}(G'')$ is small (as this implies immediately that $\mu^{(k)}(G')$
is small, by pigeonhole). Because we are in $\SL_2$, $G''$ is not just solvable but $2$-step
solvable, i.e., any elements $g_1,g_2,g_3,g_4\in G''$ must satisfy 
\begin{equation}\label{eq:pikovaja}\lbrack 
\lbrack g_1,g_2\rbrack, \lbrack g_3,g_4\rbrack \rbrack = e.\end{equation}
By the same idea as before, for $k\leq c \log p$, $c$ small enough, this is possible only
if $g_1,g_2,g_3,g_4$ are projections $\mo p$ of elements of
$\langle A\rangle$ that also satisfy (\ref{eq:pikovaja}).
However, any set $S$ of words of length $\leq l$ in a free group such that
all $4$-tuples of elements of $S$ satisfy (\ref{eq:pikovaja}) must be of size $\leq l^{O(1)}$, by a simple argument \cite[Prop.~8 and Lem.~3]{MR2415383} based
on the fact that 
the centralizer of a non-trivial element in a free group is cyclic:
the centralizer is a free group (being a subgroup of a free group) but it cannot be of rank
$\geq 2$, as it satisfies a non-trivial relation.
 Hence $\mu^{(2 k)}(G'')$, and thus $\mu^{(k)}(G')$, is indeed small:
\[\mu^{(k)}(G') \ll \mu^{(2k)}(G'') \ll \frac{(2k)^{O(1)}}{r^{2k}} 
\ll \frac{1}{p^{\eta}},\]
where $\eta>0$ depends only on $c$, and thus only on $A$.

(For general $G$, showing that there is no concentration in a
proper subgroup $G'$ is a 
much more delicate matter. A fully general solution was given by \cite{SGV}
(``escape in mass from subvarieties'').)

{\em Stage 2.} {\em Decrease in $\mu^{(k)}$ for $k$ medium-sized, due to
$|A^3|\geq |A|^{1+\delta}$.} We are in the case in which one of the main results in this 
survey (Thm.~\ref{thm:main08}) will be applied (via Prop.~\ref{prop:flatlem}).
Consider $\mu^{(k)}$, $\mu^{(2 k)}$, $\mu^{(4 k)}$, etc.
At each step, we apply Prop.~\ref{prop:flatlem} (the flattening lemma)
with $K = p^{\delta'}$, $\delta'>0$ to be set later. If (\ref{eq:gorto})
fails every time, we obtain $|\mu^{(2^r k)}|_2^2 < 1/|G|$ after $r=O_{\delta',\eta}(1)$
steps; we then go to stage 3. 

Suppose (\ref{eq:gorto}) holds for some $k' = 2^j k$, $j\ll_{\delta',\eta} 1$.
Then Prop.~\ref{prop:flatlem} gives that there is
a $p^{O(\delta')}$-approximate subgroup $H\subset G$ of size 
$\ll p^{O(\delta')}/|\mu^{(k')}|_2^2$ and an element $g\in G$ such that $\mu^{(k')}(Hg)
\gg p^{-O(\delta')}$. In particular, $|H^3|< |H|^{1+O(\delta'/\eta)}$.
Choosing $\delta'>0$ small enough, we get a
contradiction to Thm.~\ref{thm:main08}, unless $|H|\geq |G|^{1-O(\delta)}$
(where we can make $\delta$ as small as we want) or $H$ is contained
in a proper subgroup $G'$ of $G$. 

If $|H|\geq |G|^{1-O(\delta)}$, then
$|H|\ll p^{O(\delta')}/|\mu^{(k')}|_2^2$ implies that $|\mu^{(k')}|_2^2 \ll 1/|G|^{1-\delta-\delta'}$, and we go to stage 3. Assume, then, that $H$ is contained in a proper
subgroup $G'$ of $G$. Then $\mu^{(k')}(G' g)\gg |G|^{-O(\delta')}$.
This implies that
$\mu^{(k)}(G' g)\gg |G|^{-O(\delta')}$ (simply because $\mu^{(k')} = \mu^{(k)} \ast
\mu^{(k'-k)}$), i.e., $\mu^{(k)}$ is concentrated in a subgroup, in
contradiction (once we set $\delta'$ small enough) with what we proved
in Stage 2.

{\em Stage 3.} {\em Amplifying the effect of $|\mu^{(k)}|_2$ small by
eigenvalue multiplicity.} We have got to an $\ell =k' \leq 2^{O_{\delta',\eta}(1)} k
\ll_{A,\delta} \log p \ll \log |G|$ such that $|\mu^{(\ell)}|_2^2 \ll
1/|G|^{1-\delta}$, where $\delta>0$ is as small as we want. This is still weaker
than a bound on $\ell^2$ mixing time (meaning
 an $\ell$ such that $|\mu^{\ell}-1/|G||_2^2
\ll \epsilon/|G|$), which is itself, in general, weaker than expansion. 
Let us see, however, how to get to expansion by using the high multiplicity
of eigenvalues (\S \ref{subs:highmult}). (This is as in Sarnak-Xue
\cite{SarnakXue}.)  
 The trace of $\mathscr{A}^{2\ell}$ is, on the one hand,
$|G| |\mu^{\ell}|_2^2$ (by definition of trace, since the probability of $x$
returning to $x$ after $2k$ steps of a random walk is $|\mu^\ell|_2^2$), 
and, on the other, $\sum \lambda_i^{2\ell}$ (sum of eigenvalues). Hence 
\[m_1 \lambda_1^{2\ell} \leq \sum \lambda_i^{2\ell} = |G| |\mu^\ell|_2^2 \ll \frac{|G|}{|G|^{1-\delta}} = |G|^{\delta},\]
where $m_1$ is the multiplicity of $\lambda_1$. As we saw in \S \ref{subs:highmult}, $m_1 = (p-1)/2 \gg |G|^{1/3}$. Thus, $\lambda_1^{2\ell} \gg |G|^{\delta-1/3}$.
 We set $\delta<1/3$ ($\delta=1/6$, say) and obtain that 
$\lambda_1 \leq 1 - \epsilon$, where $\epsilon>0$ depends only on $A$ (and $\delta$,
which is now fixed).
\end{proof}

Applications have called for generalizing Thm~\ref{thm:bg} in two
directions. One is that of changing the Lie type. Here the first
step was taken by Bourgain and Gamburd themselves \cite{MR2443926};
a fully general statement for all perfect $G$ 
is due to Varj\'u and Salehi-Golsefidy \cite{SGV}. (We have already
discussed one of the main issues involved in a generalization, namely, 
avoiding concentration in subgroups.) The other kind of generalization
consists in changing the ground ring. For many applications, the most important
change turns out not to be changing $\mathbb{F}_p$ for $\mathbb{F}_q$, but
changing $\mathbb{Z}/p \mathbb{Z}$ for $\mathbb{Z}/d\mathbb{Z}$. (This
is needed for the the {\em affine sieve} \cite{MR2587341}, one
of the main ways in which results in the area get applied nowadays.)
For $\SL_2$ and $d$ square-free, this was done in \cite{MR2587341};
\cite{MR2862040} and \cite{SGV}
solved the problem for $G$ general and $d$ square-free; \cite{BGSU2} 
addressed
$\SL_2$ and $d=p^k$, and \cite{MR2538500} did the same for $\SL_n$ and $d=p^k$.
So far the only result for general moduli $d$ is \cite{MR2897695},
which treats $\SL_n$; as of the time of writing, the case of $G$ general and $d$ general is not yet
finished.

\section{Growth in permutation groups}\label{sec:pergro}

\subsection{Introduction}
Our aim now will be to give some of the main ideas in the proof of 
quasipolynomial diameter for all Cayley graphs of the symmetric and
alternating groups (Thm.~\ref{thm:hs}):
\[\diam(\Gamma(G,A)) \ll \exp \left((\log n)^{O(1)}\right)\]
for any transitive group $G$ on $n$ elements and any
$A\subset G$ generating $G$. (Here $O(1)$ is just $4+\epsilon$.)

The proof uses much of the foundational material we gave in \S \ref{sec:sojor}.
The structure of the proof is very different from that for linear algebraic
groups, however. In particular, we do not have access to dimensional bounds
(since there is no clear meaning to {\em dimension} in a permutation group)
or to escape-from-subvarieties arguments (essentially for the same reason).

Additional tools come from two sources. There were existing results on diameters
of permutation groups: among those that were particularly useful, 
\cite[Thm. 1.1]{BS92} reduces the problem to that for $\langle A\rangle =
\Alt(n)$ (intuitively the hardest case) and \cite{BBS04} proves a polynomial
diameter bound provided that $A$ contains at least one element other than the
identity with small support. Note that \cite{BBS04} already uses the fact 
that even a very short random walk in $\Sym(n)$ takes an element of
$\Omega = \{1,2,\dots,n\}$ to any other element with almost uniform 
distribution.

Before \cite{MR3152942}, the strongest bound on the diameter of permutation
groups was that of Babai-Seress \cite{BS88}, who showed that, for any
permutation group $G$ on $n$ elements, and any $A\subset G$ generating $G$,
\[\diam(\Gamma(G,A)) \leq \exp((1+o(1)) \sqrt{n \log n}).\]
While this is much weaker than the bound in Thm.~\ref{thm:hs}, 
it does not assume transitivity (and indeed it can be tight for non-transitive
groups). Moreover, the proof (see also \cite{MR894827} and \cite{BLS87}) contains an idea
that was useful in \cite[\S 3.6]{MR3152942} -- namely, that prefixes can be played
with to give the existence of non-identity elements with small support
\cite[Lemma 1]{BS88}.

Another key source of ideas came from existing work on Classification-free
results on subgroups of $\Sym(n)$. The Classification of
Finite Simple Groups is a result whose first proof spanned many volumes;
its acceptance was gradual -- even the date of its completion, at some point
in the 80s, is unclear. Thus, there was an interest in what \cite{Bab82}
called ``intelligible proofs of the asymptotic consequences of the
Classification Theorem''. Work in this direction includes
\cite{Bab82}, \cite{Pyb93} and \cite{LP}.

 Precisely because this work
had combinatorial, relatively elementary bases, it turned out to be very robust:
that is, these are results on the size of subgroups that can be generalized
to any subsets that grow slowly under multiplication. (The basic idea here
is as in the orbit-stabilizer theorem and its consequences
(\S \ref{subs:joro}): these are
bounds on the size of a subgroup $H$ that are based on maps or processes that
multiply $H$ by itself a few times ($\leq k$ times, say); 
thus, if instead of having a subgroup $H$
we have a set $A$, we still have a result -- just one where $H$ gets replaced
sometimes by $A$ and sometimes by $A^k$, as we shall see.)

Just as a generalization of \cite{LP} played an important role in both
\cite{MR2436141} and \cite{BGT}, a generalization of \cite{Bab82} and 
\cite{Pyb93} plays an important role in \cite{MR3152942}. What \cite{MR3152942} uses
is not the final result
 in \cite{Bab82}, but rather an intermediate result,
the ``splitting lemma''. This is a result based on what is called the 
{\em probabilistic method} in combinatorics (generally, as in \cite{MR1885388},
traced back to Erd\H{o}s). This method is based on the observation that, if we 
show that something happens with positive probability, then it happens 
sometimes; thus, if we impose a convenient distribution (often the 
uniform one) on some initial objects, and we obtain that they then satisfy
a certain property with positive probability, we have shown that a configuration
of objects satisfying the property exists. The objects in \cite{Bab82} are
elements of a group $H$. Now, we, in \cite{MR3152942}, do not have the right to choose 
elements of $H$ at random; to do so would be to assume what we are trying to 
prove, namely, a small bound on the diameter. Instead (as in \cite{BBS04})
we mimic the effects of a uniform distribution by means of a random walk;
since the set $\{1,2,\dotsc,n\}$ being acted upon is small, a short random
walk is enough to give a distribution very close to uniform. 

The proof in \cite{MR3152942} contains many other elements; a full outline 
is given in \cite[\S 1.5]{MR3152942}. Here, let us focus on a crucial part: the 
generalization of Babai's
splitting lemma, and its application by means of the orbit-stabilizer theorem
to create elements of $A^k$ in small subgroups of $\Sym(n)$. We will then
look at a different part of the proof, giving a result of independent interest
(Prop.~\ref{prop:crat})
 on small generating sets. This will demonstrate how different random processes
-- not just random walks on graphs -- can be used to give explicit results
on growth and generation.

\subsubsection{Notation}
As we said earlier, we will follow here the sort of notation that is
current in the literature on permutation groups: actions $G\curvearrowright X$
are by default on the right, $x^g$ means $g(x)$ (so that $x^{gh} = (x^g)^h$),
$x^A$ is the orbit of $x$ under $A\subset G$, and 
$A_x$ means $\Stab(x) \cap A$.
There are two different kind of stabilizers of a set $S\subset X$: 
the {\em setwise stabilizer}
\[A_S = \{g\in A: S^g = S\}\]
and the {\em pointwise stabilizer}
\[A_{(S)} = \{g\in A: x^g = x\;\; \forall x\in S\}.\]
(The notation here is as in \cite{DM} and \cite{MR1970241}, not as in
\cite{MR0183775}.)

\subsection{Random walks and elements of small support}\label{subs:gotr}

We will start with some basic material on random walks. We will then be able
to go briefly over the proof of \cite{BBS04}.

Let us start by defining our terms. We will work with a
directed multigraph $\Gamma$, that is, a graph where the edges are
directed (i.e., they are arrows) and may have multiplicity $>1$. 
(The setting we will work in now
is more general than that of Cayley graphs.) We assume that $\Gamma$
is {\em strongly connected} (i.e., there is a path respecting the arrows
between any two points in the vertex set $V(\Gamma)$), {\em regular of
valency (or degree) $d$} (meaning that there are $d$ arrows (counted with multiplicity)
going out of every vertex in $V(\Gamma)$) and {\em symmetric} (i.e.,
the number of arrows from $x$ to $y$ is the same as the number of arrows
from $y$ to $x$, counting with multiplicity in both cases). 

We will study a lazy random walk: a particle moves from vertex to vertex,
and at each point in time, if it is at a vertex $x$, and the arrows
going out of $x$ end at the vertices $x_1$, $x_2$,\dots, $x_d$
(with repetitions possible), the particle decides to be lazy (i.e., 
stays at $x$) with probability $1/2$,
and moves to $x_i$ with probability $1/2d$. (Studying a lazy random
walk is a well-known trick used to avoid the possible effects of large
negative eigenvalues of the adjacency matrix.)

Let $x,y \in V(\Gamma)$. We write $p_k(x,y)$ for the probability
that a particle is at vertex $y$ after $k$ steps of a lazy random
walk starting at $x$. For $\epsilon>0$ given, the {\em $(\ell_\infty,\epsilon)$-mixing 
time} is the least $k$ such that \[\left|p_k(x,y)-\frac{1}{
|V(\Gamma)|}
\right|_\infty
\leq \frac{\epsilon}{|V(\Gamma)|}.\]
(A {\em mixing time} is a time starting at which the outcome of a random walk
is very close to a uniform distribution; the norm (e.g. $\ell_\infty$)
and the tolerance (namely, $\epsilon$) have to be specified (as they are here)
for ``very close'' to have a precise meaning.)

As before, the normalized adjacency matrix $\mathscr{A}$ is the
operator taking a function $f:V(\Gamma)\to \mathbb{C}$ to the function
$\mathscr{A}f:V(\Gamma)\to \mathbb{C}$ defined
by
\[\mathscr{A}f(x) = \frac{1}{d}
\sum_{\text{arrows $v$ with $\text{tail}(v) = x$}} f(\text{head}(v)).\]
(An arrow goes from its tail to its head.) 
Let $f_0, f_1, f_2,\dotsc$ be a full set of eigenvectors corresponding to the
eigenvalues $1=\lambda_0\geq \lambda_1\geq \lambda_2\geq \dotsc$ 
of $\mathscr{A}$. If $\Gamma$ is regular and symmetric, then $\mathscr{A}$
is a symmetric operator, and so all $\lambda_i$ are real,
and we can also assume all the $f_i$ to be real-valued and orthogonal to
each other.
The following fact is well-known. 
\begin{lem}\label{lem:chudo}
Let $\Gamma$ be a connected, regular and symmetric multigraph of
valency $d$ and with $N$ vertices. Then the $(\ell_\infty,\epsilon)$-mixing 
time is at most $N^2 d \log(N/\epsilon)$.
\end{lem}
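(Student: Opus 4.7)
The plan is spectral. Since $\Gamma$ is regular and symmetric of valency $d$, the normalized adjacency operator $\mathscr{A}$ defined above is self-adjoint on $\ell^2(V(\Gamma))$ with the counting inner product, and the lazy walk has transition operator $P = (I + \mathscr{A})/2$; hence $P$ is self-adjoint with eigenvalues $\mu_i = (1+\lambda_i)/2 \in [0,1]$, with $\mu_0 = 1$ attained by the constant eigenvector $f_0 = N^{-1/2}\mathbf{1}$, and $\mu_1 < 1$ by connectedness. Let $\{f_i\}$ be an orthonormal eigenbasis. Expanding $\delta_x = \sum_i f_i(x) f_i$, one has
\[
p_k(x,y) \;=\; (P^k \delta_x)(y) \;=\; \tfrac{1}{N} + \sum_{i\geq 1} \mu_i^k f_i(x) f_i(y),
\]
and Cauchy--Schwarz together with $\sum_i f_i(x)^2 = \|\delta_x\|_2^2 = 1$ yields the pointwise bound $\bigl|p_k(x,y) - 1/N\bigr| \leq \mu_1^k$, valid for all $x,y \in V(\Gamma)$.

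The entire lemma then reduces to the spectral-gap estimate $1 - \mu_1 \geq 1/(N^2 d)$, because from it $\mu_1^k \leq e^{-k/(N^2 d)} \leq \epsilon/N$ as soon as $k \geq N^2 d \log(N/\epsilon)$. This is the one nontrivial step. A naive Cheeger bound (using that any proper cut of a connected $N$-vertex $d$-regular graph crosses at least one edge, so the normalized isoperimetric constant is $\gg 1/(dN)$) loses a factor of $d$, yielding only $1 - \mu_1 \gg 1/(d^2 N^2)$. I would instead use the Poincar\'e / canonical-paths inequality of Diaconis--Stroock and Sinclair: for each ordered pair $(x,y) \in V(\Gamma)^2$ fix a shortest path $\gamma_{xy}$ in $\Gamma$, necessarily of length $|\gamma_{xy}| \leq N - 1$; write $\pi \equiv 1/N$ for the (uniform) stationary distribution and $Q(e) = \pi(\mathrm{tail}(e))\,P(\mathrm{tail}(e), \mathrm{head}(e)) = 1/(2dN)$ for the edge weight. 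The chain is reversible since $P$ is symmetric, so the Poincar\'e inequality applies and gives $1 - \mu_1 \geq 1/K$ with
\[
K \;=\; \max_{e} \frac{1}{Q(e)} \sum_{(x,y):\, e \in \gamma_{xy}} \pi(x)\,\pi(y)\,|\gamma_{xy}| \;\leq\; (2dN) \cdot \frac{1}{N^2}\cdot N \cdot N^2 \;=\; 2dN^2,
\]
where the three factors after $2dN$ come from $\pi(x)\pi(y)$, the path-length bound $|\gamma_{xy}| \leq N$, and the trivial count of at most $N^2$ ordered pairs routed through any single edge.

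The main obstacle, beyond verifying the standard bookkeeping of the canonical-paths inequality, is purely cosmetic: the bound above is off from the lemma by a harmless factor of $2$. I would absorb this either by noting that routing $\gamma_{yx}$ along the reverse of $\gamma_{xy}$ halves the congestion sum by symmetry, or by sharpening the Poincar\'e bound via weighted paths, or simply by replacing $\log(N/\epsilon)$ with $\log(2N/\epsilon)$ throughout, which affects no downstream application. The remaining substance --- exponential decay in the eigenbasis, orthonormality, and $\|\delta_x\|_2 = 1$ --- is routine.
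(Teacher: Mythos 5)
Your argument is correct, and the first half (expanding $\delta_x$ in an orthonormal eigenbasis for the symmetric operator $P=(I+\mathscr{A})/2$ and using Cauchy--Schwarz together with $\sum_i f_i(x)^2 = \|\delta_x\|_2^2 = 1$ to get $|p_k(x,y) - 1/N| \leq \mu_1^k$) is precisely what the paper leaves as a routine exercise. At the one substantive step, though --- the lower bound on the spectral gap --- you take a genuinely different route. The paper's argument is elementary and entirely self-contained: take the eigenvector $f_1$ for $\lambda_1$; since $\langle f_0, f_1\rangle = 0$, its maximum $r_+$ and minimum $r_-$ satisfy $r_+ > 0 > r_-$; by pigeonhole there is a threshold $r$ with no value of $f_1$ in $(r-\eta, r)$, where $\eta = (r_+ - r_-)/N$; setting $S = \{x : f_1(x) \geq r\}$, connectedness forces at least one arrow to escape $S$, and its head has $f_1 < r - \eta$, yielding $\sum_{x \in S}\mathscr{A}f_1(x) \leq \sum_{x\in S} f_1(x) - \eta/d$ and hence $\lambda_1 \leq 1 - 1/(N|S|d) \leq 1 - 1/(N^2 d)$. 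You instead invoke the canonical-paths Poincar\'e inequality of Diaconis--Stroock and Sinclair as a black box. Both routes give the same order, and both quietly beat the naive Cheeger estimate (which, as you correctly observe, would cost a factor of $d$) --- the paper does so via the pigeonhole gap rather than via congestion. The trade-off is that the paper's version needs no external citation, which suits an expository survey, while yours is a clean application of standard probabilistic machinery and arguably easier to remember. One small correction to your closing remark: the factor-of-$2$ slack you flag is \emph{not} peculiar to the canonical-paths route. The paper's own pigeonhole bound $\lambda_1 \leq 1 - 1/(N^2 d)$ is for $\mathscr{A}$, and passing to the lazy operator gives only $\mu_1 = (1+\lambda_1)/2 \leq 1 - 1/(2N^2 d)$, so the resulting mixing-time bound is likewise $2N^2 d\log(N/\epsilon)$. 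The constant in the lemma is stated a bit generously either way; this is harmless, as the one place the lemma is used already carries a visible factor of $4$ in the length bound, and constants play no role downstream.
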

The proof contains two steps: a trivial bottleneck bound gives a lower
bound on the eigenvalue gap $\lambda_0-\lambda_1 = 1-\lambda_1$, and
a lower bound on the eigenvalue gap gives an upper bound on the
mixing time. 
\begin{proof}
Let $f_1$ be an eigenvector corresponding to $\lambda_1$; since
$f_1$ is orthogonal to the constant function $f_0$, the maximum $r_+$ and
the minimum $r_-$ of $f_1(x)$ obey $r_+>0>r_-$. By pigeonhole, there is an 
$r\in (r_-,r_+\rbrack$ such that there are no $x\in V(\Gamma)$ with
$f_1(x) \in (r-\eta,r)$, where $\eta = (r_+-r_-)/N$. Let $S = \{x\in V(\Gamma):
f_1(x)\geq r\}$. Clearly, $S$ is neither empty nor equal to all of 
$V(\Gamma)$.

Since $\Gamma$ is connected, there is at least one $x\in S$ with at least one
arrow starting at $x$ and ending outside $S$. (This is the same as saying
that the {\em bottleneck} of a connected graph is $\geq 1/Nd$.) Hence
\begin{equation}\label{eq:givus}
\sum_{x\in S} \mathscr{A}f_1(x) \leq - \frac{\eta}{d} + \sum_{x\in S} f_1(x).
\end{equation}
Again by $\langle f_0,f_1\rangle=0$, the average of $f_1(x)$ over $S$ is $>0$;
trivially, it is also $<r_+$. Thus, (\ref{eq:givus}) gives us
\[\sum_{x\in S} \mathscr{A}f_1(x) \leq \left(1 - \frac{1}{N |S| d}\right)
\sum_{x\in S} f_1(x).\]
Therefore, $\lambda_1 \leq 1 - 1/N |S| d \leq 1 - 1/N^2 d$.

This implies the desired bound
on the mixing time (exercise\footnote{See the proof
of \cite[Lem.~4.1]{MR3152942} (or any of many other sources, e.g., 
\cite[Thm.~5.1]{MR1395866}) for a solution. It is easy to do this 
suboptimally
and obtain an extra factor of $|p_k(x,y)-1/N|\leq N \lambda_2^k$ instead of
$|p_k(x,y)-1/N|\leq \lambda_2^k$.}) by 
an idea already used in \S \ref{subs:highmult}:
every step of the random walk multiplies the vector describing the 
probability distribution of the particle by $(\mathscr{A}+I)/2$, and so
anything orthogonal to a constant function gets multiplied by $\lambda_1
= 1 - 1/N^2 d$ (or less) repeatedly.
\end{proof}

Lemma \ref{lem:chudo} may look weak, but it is actually quite useful for $N$
small, i.e., graphs with small vertex sets. 
When we work with a permutation group $G\leq \Sym(n)$, we may not
have all the geometry we had at hand when working with linear algebraic
groups, but we do have something else -- an action on the small
set $\{1,2,\dotsc,n\}$ (and tuples thereof); that action gives rise to
graphs with small vertex sets, allowing us to use Lemma \ref{lem:chudo}.

First, we prove that we can mimic the uniform distribution on $k$-tuples
by relatively short random walks. This is just as in \cite[\S 2]{BBS04}.

A {\em $k$-transitive} subgroup of $\Sym(n)$ is one whose action on the
set of $k$-tuples of distinct elements of $\{1,2,\dotsc,n\}$ is transitive.
\begin{lem}\label{lem:bulgo}
Let $G$ be a $k$-transitive subgroup of $\Sym(n)$. Let $A$
be a set of generators of $G$. Then there is a subset $A_0\subset A
\cup A^{-1}$ such that the following holds.

For $\epsilon>0$ arbitrary, 
for any $\ell \geq 4 n^{2k+1} \log(n^k/\epsilon)$
and for any $k$-tuples $\vec{x}$, $\vec{y}$ of distinct elements of
$\{1,2,\dotsc,n\}$, 
the probability that the outcome $g\in \langle A_0\rangle$
 of a lazy random walk of length $\ell$ (on the graph
$\Gamma(\langle A_0\rangle,A_0)$, starting at $e$) take $\vec{x}$ to $\vec{y}$
is at least $(1-\epsilon) (n-k)!/n!$ and at most $(1+\epsilon) (n-k)!/n!$.
\end{lem}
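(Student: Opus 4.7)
The plan is to pass from the (huge) Cayley graph $\Gamma(\langle A_0\rangle, A_0)$ to the (small) Schreier multigraph of the action of $G$ on $k$-tuples, which has at most $n^k$ vertices, and then apply Lemma \ref{lem:chudo}. Let $X$ be the set of ordered $k$-tuples of distinct elements of $\{1,\dotsc,n\}$; then $|X| = n!/(n-k)! \leq n^k$, and $k$-transitivity of $G$ means precisely that $G$ acts transitively on $X$.

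The quantitative key is to choose a symmetric generating set $A_0\subset A\cup A^{-1}$ of $G$ whose size grows only linearly in $n$. I would process the elements of $A\cup A^{-1}$ one by one, retaining $a$ only when adjoining $\{a,a^{-1}\}$ strictly enlarges the subgroup generated so far; the number retained is at most twice the length of the longest strictly ascending chain of subgroups of $G\leq\Sym(n)$, which is $O(n)$ by the theorem of Cameron, Solomon and Turull on chain lengths in $\Sym(n)$. One thereby arranges $|A_0|\leq 4n$ with $A_0=A_0^{-1}$ and $\langle A_0\rangle = G$. This is where the main technical difficulty lies: the weaker universal bound $|A_0|\leq \log_2|G| = O(n\log n)$ would cost an extra factor of $\log n$ in the final bound and spoil the stated constant.

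Let $\Sigma$ be the Schreier multigraph on vertex set $X$ with one directed edge $\vec{u}\to \vec{u}^a$ for every $\vec{u}\in X$ and $a\in A_0$. Then $\Sigma$ is $|A_0|$-regular, symmetric (since $A_0=A_0^{-1}$ pairs every edge $\vec{u}\to\vec{u}^a$ with $\vec{u}^a\to\vec{u}$ via $a^{-1}$), and connected (by transitivity of $G$ on $X$). Lemma \ref{lem:chudo} therefore applies to $\Sigma$ with tolerance $\epsilon$, giving an $\ell_\infty$-mixing time of at most
\[
|X|^2 |A_0|\, \log(|X|/\epsilon) \;\leq\; n^{2k}\cdot 4n\cdot \log(n^k/\epsilon) \;=\; 4 n^{2k+1}\log(n^k/\epsilon).
\]

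To conclude, I observe that the lazy random walk on $\Gamma(\langle A_0\rangle, A_0)$ starting at $e$ projects under $g\mapsto \vec{x}^g$ to the lazy random walk on $\Sigma$ starting at $\vec{x}$: each multiplication by $a\in A_0$ in the Cayley walk translates into a move along the $a$-edge of $\Sigma$, and the laziness coin is shared by the two walks. Hence, for $\ell\geq 4 n^{2k+1}\log(n^k/\epsilon)$, Lemma \ref{lem:chudo} yields
\[
\left| \Pr[\vec{x}^g=\vec{y}] - \frac{(n-k)!}{n!} \right| \;\leq\; \frac{\epsilon}{|X|} \;=\; \epsilon\cdot \frac{(n-k)!}{n!},
\]
which is precisely the two-sided estimate claimed.
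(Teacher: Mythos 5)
Your proof is correct and matches the paper's proof essentially step for step: trim $A\cup A^{-1}$ to a symmetric generating set $A_0$ of size $O(n)$ via the subgroup-chain bound for $\Sym(n)$, pass to the Schreier multigraph on $k$-tuples (which has $n!/(n-k)!\leq n^k$ vertices, is $|A_0|$-regular, symmetric, and connected by $k$-transitivity), and apply Lemma~\ref{lem:chudo}. The only cosmetic difference is that the paper builds $A_0$ by picking elements of $A$ and then symmetrizing, while you filter $A\cup A^{-1}$ directly; both give $|A_0|<4n$ from the $2n-3$ chain-length bound.
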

The number of $k$-tuples of distinct elements of $\{1,2,\dotsc,n\}$ is,
of course, $n!/(n-k)!$.
\begin{proof}
Since $A$ may be large, and it will be best to work with a generating set
that is not very large, we start by choosing a subset of $A$
that still generates $G$. This we do simply by choosing an element 
$g_1\in A$, and then an element $g_2\in A$ such that $\langle g_1\rangle
\lneq \langle g_1,g_2\rangle$, and then a $g_3\in A$ such that
$\langle g_1,g_2\rangle \lneq \langle g_1,g_2,g_3\rangle$, etc.,
until we get $\langle g_1,g_2,\dotsc,g_r\rangle = \langle A\rangle = G$
($r\geq 1$).
Since the longest subgroup chain in $\Sym(n)$ is of length $\leq 2n-3$ 
\cite{MR860123},\footnote{The trivial bound is 
$(\log n!)/\log 2$: in a subgroup chain
$H_1 \lneq H_2 \lneq \dotsc \lneq H_k$,
we have $|H_2|\geq 2 |H_1|$, $|H_3|\geq 2 |H_2|$, etc., simply because
the index of a proper subgroup of a group is always
$\leq 2$.} we see that $r\leq 2n-2< 2n$. Let
$A_0 = \{g_1,g_1^{-1},\dotsc,g_r,g_r^{-1}\}$.

Now define
the multigraph $\Gamma$ by letting the set of vertices consist of all
$k$-tuples of distinct elements of $\{1,2,\dotsc,n\}$; draw an arrow
between $\vec{z}=(z_1,z_2,\dotsc,z_k)$ and $(z_1^a,z_2^a,\dotsc,z_k^a)$
for every vertex (i.e., $k$-tuple) $\vec{z}$ and every $a\in A_0$.
(This is, of course, a Schreier graph.)
Finish by applying Lem.~\ref{lem:chudo}.
\end{proof}

We will now see how to adapt the {\em probabilistic method}
using Lem.~\ref{lem:bulgo} to approximate
a uniform distribution by a short random walk.

\begin{prop}[\cite{BBS04}]\label{prop:lalka}
Let $A\subset \Sym(n)$ generate a $3$-transitive subgroup of $\Sym(n)$.
Let $g\in A^{\ell_0}$, $\ell_0\geq 1$ arbitrary. Assume that
$0<|\supp(g)|<n$. Then, for any $\epsilon>0$,
there is an element $g' \in (A\cup A^{-1})^{\ell+4\ell_0}$, 
$\ell\ll n^7 \log (n/\epsilon)$,
$g\ne e$,
such that \[\supp(g')\leq 
3 + 3 (1+O(\epsilon)) |\supp(g)|^2/n,\]
where the implied constant is absolute.
\end{prop}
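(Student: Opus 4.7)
The plan is to set $g' = [g, g^h]$ for a well-chosen element $h$ coming from a short random walk, and to bound $|\supp(g')|$ by controlling how much $S := \supp(g)$ and $S^h := \supp(g^h) = h^{-1}(S)$ intersect. Since $|S| = |S^h| = s$ and, intuitively, $S^h$ behaves like a random $s$-subset for random $h$, the intersection should have size roughly $s^2/n$. A three-set decomposition will show that $\supp([g, g^h])$ is contained in such a collision set together with two analogous collision sets, giving an average support of size about $3 s^2/n$.

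Concretely, since $\langle A\rangle$ is $3$-transitive, applying Lemma~\ref{lem:bulgo} with $k = 3$ and tolerance parameter $\sim\epsilon$ produces a subset $A_0\subseteq A\cup A^{-1}$ and a length $\ell_1 = O(n^7\log(n/\epsilon))$ such that a lazy random walk of length $\ell_1$ on $\Gamma(\langle A_0\rangle, A_0)$ outputs an $h$ whose action on ordered triples of distinct elements of $\{1,\ldots,n\}$ is $(1\pm O(\epsilon))$-close to uniform, and likewise for the induced distributions on pairs and singletons.

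For the support analysis, one verifies directly that
\[\supp([g, g^h]) \subseteq (S\cap S^h) \cup T_1 \cup T_2,\]
with $T_1 = \{x \in S\setminus S^h : x^g\in S^h\}$ and $T_2 = \{x \in S^h\setminus S : x^{g^h}\in S\}$: if $x\notin S\cup S^h$ then both $g$ and $g^h$ fix $x$; if $x\in S\setminus S^h$ then $g^h$ fixes $x$ and $[g,g^h]$ fixes $x$ iff $g^h$ also fixes $x^g$, i.e.\ iff $x^g\notin S^h$; and the case $x\in S^h\setminus S$ is symmetric. Taking expectations over the random walk and using near-uniformity of $h^{-1}$ on singletons and ordered pairs,
\[\mathbb{E}|S\cap S^h| \leq (1+O(\epsilon))\, s^2/n, \qquad \mathbb{E}|T_1|,\ \mathbb{E}|T_2| \leq (1+O(\epsilon))\, s^2(n-s)/(n(n-1)),\]
so $\mathbb{E}|\supp([g, g^h])| \leq 3(1+O(\epsilon))\, s^2/(n-2)$, and some $h$ achieves this bound.

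The remaining issue --- \emph{the main obstacle} --- is that $g' = [g, g^h]$ might be trivial. When $s$ is very small (say $g$ is a transposition) then $g^h$ commutes with $g$ for most $h$, so the expectation bound alone does not rule out $g' = e$. The additive $+3$ in the statement absorbs this degeneracy: one argues, via an escape-type argument (using $3$-transitivity to ensure $\langle A_0\rangle\not\subseteq C_G(g)$), that among the $h$ produced by walks of length $\leq\ell_1$ there is at least one with $[g, g^h]\neq e$; but any such nontrivial $[g,g^h]$ still has support contained in $(S\cap S^h)\cup T_1\cup T_2$, which has size at most $3$ (e.g.\ a $3$-cycle, in the regime $s^2/(n-2)\ll 1$) or at most $3(1+O(\epsilon))s^2/(n-2)$ otherwise. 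Together these yield the bound $|\supp(g')|\leq 3 + 3(1+O(\epsilon))s^2/(n-2)$. Finally, since $[g, g^h] = g^{-1} h^{-1} g^{-1} h\, g\, h^{-1} g\, h$ is a word of length $4\ell_0 + 4\ell_1$ in $A\cup A^{-1}$, setting $\ell := 4\ell_1 \ll n^7\log(n/\epsilon)$ gives $g'\in (A\cup A^{-1})^{\ell + 4\ell_0}$, as required.
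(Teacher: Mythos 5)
Your proposal follows the paper's approach in broad outline: take a commutator $g'=[g,g^h]$ with $h$ produced by a short random walk, decompose $\supp(g')$ into three collision-type sets each of expected size roughly $|S|^2/n$, and apply Lemma~\ref{lem:bulgo} with $k=3$. The length bookkeeping and the decomposition into $(S\cap S^h)\cup T_1\cup T_2$ are correct (in fact slightly tighter than the paper's union of three conjugates of $S\cap S^\sigma$). You also correctly identify the central difficulty: when $|\supp(g)|$ is small, the unconditional expectation bound is compatible with $[g,g^h]=e$ for the vast majority of $h$, so the expectation argument alone does not furnish a \emph{nontrivial} $g'$.

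However, your resolution of that difficulty has a real gap. You propose to (i) show by an escape-type argument that \emph{some} $h$ in range yields $[g,g^h]\ne e$, and then (ii) claim that for that $h$ the set $(S\cap S^h)\cup T_1\cup T_2$ is still small. Step (ii) does not follow: the existence argument in (i) gives you no control over the size of the collision set for the particular $h$ it produces. The $h$'s for which the commutator is nontrivial and the $h$'s for which the collision set is small need not overlap, and an appeal to ``size at most $3$\ (e.g.\ a $3$-cycle)'' conflates the lower bound on the support of a nontrivial permutation with an upper bound on the collision set. The paper sidesteps this by making the nontriviality and the expectation estimate happen \emph{simultaneously}: it conditions the random walk on two point-images, choosing $y_1\in\supp(g)$, $y_1'\in\supp(g)$, $y_2\notin\supp(g)$, $y_2'=(y_1')^{g^{-1}}$ and requiring $y_i^\sigma=y_i'$, which (a) forces $(y_1')^{[g,h]}\ne y_1'$ and hence $[g,h]\ne e$ for \emph{every} admissible $\sigma$, and (b) preserves approximate uniformity on the remaining $n-2$ points (this is precisely why $k=3$ rather than $k=1$). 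The conditional expectation then gives $\mathbb{E}|S\cap S^\sigma|\le 1+(1+O^*(\epsilon))|S|^2/(n-2)$, which after tripling produces exactly the additive $+3$ and the denominator $n-2$ in the statement. Without the conditioning, you obtain neither those constants nor a guaranteed nontrivial $g'$; you should replace your ``escape-type argument plus regime split'' with this conditioning step.
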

The conclusion is non-trivial only when $\supp(g)<n/3$.
\begin{proof}
Given $\sigma\in \Sym(n)$ and $x\in \{1,2,\dotsc,n\}$, 
let $h = \sigma^{-1} g \sigma$; thus, $\supp(h) = (\supp(g))^\sigma$.
Write $S$ for $\supp(g)$.
 When is 
$x$ in the support of
the commutator
 $\lbrack g,h \rbrack = g^{-1} h^{-1} g h$? 
(Defining the commutator
in this way, rather than as $g h g^{-1} h^{-1}$, 
is standard in the study of permutation groups.)
There are three possibilities:
\begin{enumerate}
\item $x\in S$ and $x^{g^{-1}}\in \supp(h)$, i.e.,
$x^{g^{-1}}\in S \cap S^{\sigma}$;
\item $x\in S$, $x^{g^{-1}}\not\in \supp(h)$ and $x\in \supp(h)$, and
so, in particular, $x\in S \cap S^{\sigma}$.
\item $x\notin S$, $x\in \supp(h)$ and $x^{h^{-1}}\in S$,
and so, in particular, $x^{h^{-1}} \in S \cap S^{\sigma}$.
\end{enumerate}
Thus, $\supp(\lbrack g,h\rbrack)$ is contained in
\[(S\cap S^{\sigma}) \cup
(S\cap S^{\sigma})^g \cup
(S\cap S^{\sigma})^h.\]

Now let $\sigma\in (A\cup A^{-1})^{\ell'}$ be the outcome of a lazy random walk
of length $\ell'=\lceil 4 n^{2k+1} \log(n^k/\epsilon)\rceil$
, $k=3$. Lemma \ref{lem:bulgo} tells us that, for any
$x,x'\in \{1,2,\dotsc,n\}$, $\sigma$ will take
$x$ to $x'$ with probability between $(1-\epsilon)/n$ and $(1+\epsilon)/n$.
Since expectation is additive, it follows that, for every $S\subset
\{1,2,\dotsc,n\}$,
\begin{equation}\label{eq:smet}\begin{aligned}
\mathbb{E}(|S\cap S^\sigma|) &= \sum_{x'\in S} \Prob(x'\in S^{\sigma})
= \sum_{x'\in S} \sum_{x\in S} \Prob(x^{\sigma} = x') \\
&=
 \sum_{x'\in S} \sum_{x\in S} \frac{1+O(\epsilon)}{n} = 
(1+O(\epsilon)) \frac{|S|^2}{n}.\end{aligned}\end{equation}

Writing $S = S$, we see that
\[\begin{aligned}\mathbb{E}(|\supp(\lbrack g,h\rbrack)|) &\leq
\mathbb{E}(|S\cap S^{\sigma}|) +
\mathbb{E}(|(S\cap S^{\sigma})^g|) +
\mathbb{E}(|(S\cap S^{\sigma})^h|)\\
&= 3 \cdot \mathbb{E}(|S\cap S^{\sigma}|)
\leq 3 (1+O(\epsilon)) \frac{|S|^2}{n}
= 3 (1+O(\epsilon)) \frac{|S|^2}{n}
.\end{aligned}\]

Now we could conclude that there exists a $\sigma\in (A\cup A^{-1})^{\ell'}$
such that $|\supp(\lbrack g,h\rbrack)|$ is at most
$3 (1+O(\epsilon)) |S|^2/n$. We forgot to take care of one detail,
however: $\lbrack g,\sigma^{-1} g \sigma\rbrack$ could be the identity.
Fortunately $\ell'$ is large enough that Lemma \ref{lem:bulgo} assures
us that, even if we specify that $y_i^{\sigma} = y_i'$ for some $y_i,y_i'\in
\{1,2,\dotsc, n\}$ ($i=1,2$; $y_1\ne y_2$, $y_1'\ne y_2'$), 
the probability that $x^{\sigma} = x'$ for
$x,x'\in \{1,2,\dotsc,n\}$ ($x\ne y_i$, $x'\ne y_i'$ for $i=1,2$) is 
$(1+O(\epsilon))/(n-2) = (1+O(\epsilon))/n$. (This is why we let $k=3$ and not $k=1$.)
We choose $y_1\in S$, $y_1'\in S$,
$y_2\notin S$, $y_2' = (y_1')^{g^{-1}}$. Then (by a brief computation)
$(y_1')^{\lbrack g,h\rbrack} \ne y_1'$, and so $\lbrack g,h\rbrack$ is not the
identity.

The analysis goes on as before, except we obtain
\[\mathbb{E}(|\{x\in S\cap S^{\sigma}|) \leq 1 + (1+O(\epsilon)) 
\frac{|S|^2}{n} \]
(or $2+(1+O(\epsilon)) |S|^2/n$ for general $S$; we are using the
fact that $y_2$ and $y_2'$ are never both in $S$ in our case) and so
\[\mathbb{E}(|\supp(\lbrack g,h\rbrack)|) \leq 
3 + 3 (1+O(\epsilon)) \frac{|S|^2}{n}.\]

Thus, there is a $\sigma\in (A\cup A^{-1})^{\ell'}$ such that
$g' = \lbrack g,h\rbrack = \lbrack g,\sigma^{-1} g \sigma\rbrack$ has
support $\leq 3 + 3 (1+O(\epsilon)) |S|^2/n$.
\end{proof}

\begin{cor}[\cite{BBS04}
]\label{cor:sto}
Let $A\subset \Sym(n)$ ($A=A^{-1}$) 
generate a $3$-transitive subgroup $G$ of $\Sym(n)$.
Assume there is a $g\in A$, $g\ne e$, with $|\supp(g)| \leq (1/3-\epsilon) n$,
$\epsilon>0$. Then 
\[\diam(\Gamma(G,A)) \ll_\epsilon n^{c_1} (\log n)^{c_2},\]
where $c_1=8$ and $c_2$ is absolute.
\end{cor}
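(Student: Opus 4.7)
The idea is to iterate the commutator construction of Proposition~\ref{prop:lalka}, starting from $g\in A$, until we have a short word in $A$ representing a non-identity element with support of size $O(1)$, and then to turn this one small-support element into a short word for every element of $\Alt(n)$ by exploiting the $3$-transitivity of $G$.

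Set $g_0 := g$ and recursively $g_{k+1} := [g_k,\,\sigma_k^{-1} g_k \sigma_k]$, where $\sigma_k \in (A\cup A^{-1})^{O(n^7\log n)}$ is the conjugating element furnished by Proposition~\ref{prop:lalka} applied to $g_k$ with tolerance $\epsilon' := \epsilon/10$. Writing $s_k := |\supp(g_k)|$ and $\ell_k$ for the word length,
\[
s_{k+1} \;\leq\; 3 + 3(1+\epsilon')\,\frac{s_k^2}{n-2}, \qquad \ell_{k+1} \;\leq\; 4\ell_k + O_\epsilon(n^7 \log n).
\]
Set $\alpha_k := s_k/n$. The first recurrence reads $\alpha_{k+1} \leq 3(1+\epsilon')\alpha_k^2 + O(1/n)$, so whenever $\alpha_k \leq 1/3 - \epsilon/2$ we have $\alpha_{k+1}/\alpha_k \leq 1 - c\epsilon$ for some $c = c(\epsilon) > 0$; this hypothesis propagates from $\alpha_0 \leq 1/3-\epsilon$. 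The sequence $s_k$ therefore decays geometrically until $s_k \leq \sqrt{n}$, after which one further step yields $s_{k+1} \leq 4$ and one or two additional iterations drive the support down to $3$. In $k = O_\epsilon(\log n)$ total rounds we reach a $3$-cycle $g^\star \in (A\cup A^{-1})^{L_1}$ with $L_1 \leq 4^{O_\epsilon(\log n)}\cdot n^7\log n = n^{O_\epsilon(1)}$.

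For the endgame, a classical theorem of Jordan states that a $3$-transitive permutation group containing a $3$-cycle must be $\Alt(n)$ or $\Sym(n)$, so $G\supseteq \Alt(n)$. Applying Lemma~\ref{lem:bulgo} with $k=3$, we obtain, for any two ordered triples of distinct points, a word $\tau\in(A\cup A^{-1})^{O(n^7\log n)}$ sending one to the other; conjugating $g^\star$ by such $\tau$ realises every $3$-cycle as a word in $A\cup A^{-1}$ of length at most $L_1 + O(n^7\log n)$. Since every element of $\Alt(n)$ is a product of at most $n$ $3$-cycles, we conclude
\[
\diam(\Gamma(G,A)) \;\ll_\epsilon\; n\cdot \bigl(L_1 + n^7\log n\bigr) \;=\; n^{O_\epsilon(1)}\cdot n\log n.
\]

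The central obstacle is that the factor-$4$ blow-up of word length across $\Theta_\epsilon(\log n)$ rounds in Step~1 causes $L_1$ to carry an $\epsilon$-dependent exponent, whereas the stated bound has $c_1 = 8$ absolute with $\epsilon$ hidden only in the implied constant. Closing this gap requires observing that once $s_k$ has dipped below a threshold like $\sqrt{n}$ the recurrence becomes super-geometric and only $O(1)$ further rounds suffice, while the ``slow'' high-support regime (where $\alpha_k$ is still close to $1/3$) should be handled not by iterating Proposition~\ref{prop:lalka} naively but by a batched version in which a single invocation of Lemma~\ref{lem:bulgo} on a tuple of size slightly larger than $3$ plays the role of many conjugations simultaneously. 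This keeps the exponent in $n$ contributed by Step~1 at $7 + O(1)$, with the remaining factor of $n$ coming from the $3$-cycle factorisation in the endgame, summing to $c_1 = 8$.
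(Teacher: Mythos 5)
Your proposal has a genuine gap, and it is precisely the gap you flag at the end — but the fix you then sketch (a ``batched'' invocation of Lemma~\ref{lem:bulgo} on larger tuples) is not what is needed, and the paper does something simpler.

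The error is in the analysis of the recurrence $\alpha_{k+1} \leq 3(1+\epsilon')\alpha_k^2 + O(1/n)$, with $\alpha_k = s_k/n$. You treat the decay as \emph{geometric}: ``whenever $\alpha_k \leq 1/3-\epsilon/2$ we have $\alpha_{k+1}/\alpha_k \leq 1-c\epsilon$,'' and so you budget $\Theta_\epsilon(\log n)$ rounds to drive $\alpha_k$ down to $O(1/n)$. That ratio bound is true but extremely lossy: the true ratio is $\alpha_{k+1}/\alpha_k \approx 3\alpha_k$, which \emph{shrinks} as $\alpha_k$ shrinks. Writing $\beta_k := 3(1+\epsilon')\alpha_k$, the recurrence reads $\beta_{k+1} \leq \beta_k^2 + O(1/n)$, i.e., it is \emph{doubly} exponential: $\beta_k \leq \beta_0^{2^k}$ up to the additive error. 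Starting from $\beta_0 \leq 1 - c\epsilon$, one needs only $O(\log(1/\epsilon))$ rounds for $\beta_k$ to drop below a fixed constant like $1/2$, and then $O(\log\log n)$ further rounds to reach $O(1/n)$. The paper's proof indeed says ``After $O(\log\log n)$ steps\dots''. With $m = O(\log(1/\epsilon)) + O(\log\log n)$ total rounds, the word-length blow-up is $4^m = (1/\epsilon)^{O(1)} \cdot (\log n)^{O(1)}$, so $\ell_1 \ll_\epsilon n^7 (\log n)^{c}$ with $c$ absolute and the $\epsilon$-dependence entirely in the implied constant. The final factor of $n$ comes, as you say, from writing an arbitrary element of $\Alt(n)$ (or $\Sym(n)$) as a product of at most $n$ of the constructed $3$-cycles (or $2$-cycles), giving $c_1 = 8$.

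Your batching idea is therefore unnecessary: there is no ``slow high-support regime'' once the recurrence is read correctly, because the doubling-of-exponent behavior kicks in immediately. Two smaller remarks: (i) the element with $|\supp| \leq 3$ need not be a $3$-cycle — it may be a transposition — and the paper treats both possibilities, using an element of $A\setminus\Alt(n)$, if one exists, to manufacture transpositions from $3$-cycles; (ii) for the endgame the paper observes directly that $A^{n^3}$ already acts $3$-transitively (so a random-walk argument via Lemma~\ref{lem:bulgo} is not required there), though since $n^3 \ll n^7\log n$ this does not affect the final exponent.
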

\begin{proof}
Apply Prop.~\ref{prop:lalka}, then apply it again with $g'$ instead of $g$, and
again and again.
After $O(\log \log n)$ steps, we will have obtained an element
$g_1 \in A^{\ell_1}$, $\ell_1\ll_{\epsilon} n^7 (\log n)^{c}$, such that
$g_1\ne e$ and $|\supp(g_1)|\leq 3$. A brief argument suffices to show that
$A^{n^3}$ 
acts $3$-transitively (i.e., for any two $3$-tuples of distinct elements
of $\{1,2\dotsc,n\}$, there is an element of $A$ taking one to the other).
Hence either all $2$-cycles or 
all $3$-cycles are in $A^{\ell_1+2n^3}$ (in that they can be obtained
by conjugating $g_1$ by elements of $A^{n^3}$). If at least one element $h$
of $A$ is not in $\Alt(n)$, it is easy to construct a $2$-cycle (and hence
all $2$-cycles) by using $h$ and some well-chosen $3$-cycles. 
We then construct every element of $\langle A\rangle$ (which, without having
meant to,
we are now showing to be either $\Alt(n)$ or $\Sym(n)$) as a product of
length at most $n$ in our $2$-cycles (if $A\not\subset \Alt(n)$) or $3$-cycles
(if $A\subset \Alt(n)$).
\end{proof}
There is clearly some double-counting going on in the proof of 
Prop.~\ref{prop:lalka}. A more careful counting argument gives an improved
statement that results in a version of Cor.~\ref{cor:sto} with $1/2$
instead of $1/3$. Redoing Prop.~\ref{prop:lalka} with well-chosen words
other than $\lbrack g,h\rbrack$ results in still better bounds;
\cite{MR3226815} gives $\diam(\Gamma(G,A)) \ll n^{O(1)}$ provided
that there is a $g\in A$, $g\ne e$ with $|\supp(g)| \leq 0.63 n$.

The moral of this section is that short random walks can be enough for
``the probabilistic method'' in combinatorics (showing existence by showing
positive probability) to work, in that they serve to approximate the
uniform distribution on $k$-tuples ($k$ small) very well.

\subsection{Large orbits, pointwise stabilizers and stabilizer chains}\label{subs:magano}

The following result was a key part of the proof of Babai's elementary
bound on the size of $2$-transitive permutation groups other than
$\Alt(n)$ and $\Sym(n)$ \cite{Bab82}. It has a probabilistic proof.

\begin{lem}[Babai's splitting lemma \cite{Bab82}]\label{lem:babai}
Let $H\leq \Sym(n)$ be $2$-transitive.
 Let $\Sigma\subset 
\{1,2,\dotsc,n\}$. Let $\rho>0$. Assume that there are at least $\rho n (n-1)$
ordered pairs $(\alpha,\beta)$ of distinct elements of $\{1,2,\dotsc,
n\}$
such that there is no $g\in H_{(\Sigma)}$ with $\alpha^g = \beta$. Then
there is a subset $S$ of $H$ with
\[H_{(\Sigma^S)} = \{e\}\]
and $|S|\ll_\rho \log n$.
\end{lem}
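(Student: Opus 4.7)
The plan is to apply the probabilistic method: choose $g_1, g_2, \dotsc, g_s \in H$ independently and uniformly at random, set $S = \{g_1, \dotsc, g_s\}$, and show that for $s$ of order $(\log n)/\rho$ the resulting pointwise stabilizer $H_{(\Sigma^S)}$ is trivial with positive probability. Writing $K = H_{(\Sigma)}$, the identity $H_{(\Sigma^g)} = g^{-1} K g$ (which is a short computation using how pointwise stabilizers transform under conjugation) gives
\[
H_{(\Sigma^S)} \;=\; \bigcap_{i=1}^{s} H_{(\Sigma^{g_i})} \;=\; \bigcap_{i=1}^{s} g_i^{-1} K g_i \;=:\; K^*.
\]

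The key observation is that $K^* = \{e\}$ is equivalent to the nonexistence of an ordered pair $(\alpha,\beta)$ of distinct elements of $\{1,2,\dotsc,n\}$ with $\beta \in \alpha^{K^*}$, since any non-identity element of $K^* \leq \Sym(n)$ must move some point. So it is enough to bound, for each fixed such pair, the probability of that event. If $\beta \in \alpha^{K^*}$, then certainly for every $i$ the weaker event $\beta \in \alpha^{g_i^{-1} K g_i}$ occurs; and unfolding the action (using $\alpha^{g^{-1} k g} = ((\alpha^{g^{-1}})^k)^g$) shows that the latter condition is equivalent to $\beta^{g_i^{-1}} \in (\alpha^{g_i^{-1}})^{K}$. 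Since $H$ is doubly transitive and $g_i$ is uniform in $H$, the pair $(\alpha^{g_i^{-1}}, \beta^{g_i^{-1}})$ is uniformly distributed over ordered pairs of distinct elements of $\{1,2,\dotsc,n\}$, so the hypothesis on $K$ yields
\[
\Prob\bigl[\beta \in \alpha^{g_i^{-1} K g_i}\bigr] \;\leq\; 1-\rho.
\]

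By independence of the $g_i$, it then follows that $\Prob[\beta \in \alpha^{K^*}] \leq (1-\rho)^s$, and a union bound over the $n(n-1)$ ordered pairs gives
\[
\Prob[K^* \neq \{e\}] \;\leq\; n(n-1)(1-\rho)^s.
\]
Taking $s = \lceil C \log n\rceil$ with $C = C(\rho) \ll 1/\rho$ makes this probability strictly less than $1$, and so a choice of $S$ witnessing the conclusion exists.

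The only place where any care is needed is to notice that ``$\beta \in \alpha^{K^*}$'' is a priori strictly stronger than ``$\beta \in \alpha^{g_i^{-1} K g_i}$ for every $i$'' (a single $k \in K^*$ must satisfy $\alpha^k = \beta$, rather than a possibly different $k_i$ in each conjugate of $K$); but the implication points in the right direction for the union bound above, so this is not a genuine obstacle. I do not anticipate any other difficulty.
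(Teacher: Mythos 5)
Your proof is correct and follows essentially the same route as the paper's: both pick the elements of $S$ uniformly at random, both use the conjugation identity $H_{(\Sigma^g)} = g^{-1}H_{(\Sigma)}g$ (or its equivalent reformulation) together with double transitivity to show a fixed pair $(\alpha,\beta)$ survives in $H_{(\Sigma^g)}$ with probability at most $1-\rho$, and both finish with a union bound over the $n(n-1)$ pairs. The only cosmetic difference is that the paper phrases the conjugation step in terms of transporting the pair $(\alpha,\beta)$ to $(\alpha^{h^{-1}},\beta^{h^{-1}})$ while you phrase it as conjugating $K$ itself; these are the same computation.
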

\begin{proof}
Let $\alpha$, $\beta$ be distinct elements of $\{1,2,\dotsc,n\}$.
Let $h\in H$. Suppose there is a $g'\in H_{(\Sigma^{h})}$ such that
$\alpha^{g'} = \beta$. Then $g = h g' h^{-1}$ is an element of $H_{(\Sigma)}$
taking $\alpha^{h^{-1}}$ to $\beta^{h^{-1}}$.

The elements $h$ of $H$ such that $h^{-1}$ takes
 $(\alpha,\beta)$ to a given pair $(\alpha',\beta')$
of distinct elements of $\{1,2,\dotsc,n\}$ form a coset of $H_{(\alpha,\beta)}$.
Hence, if we choose an element $h\in H$ at random, $h^{-1}$ is equally likely
to take $(\alpha,\beta)$ to any given pair $(\alpha',\beta')$.
In particular, the probability that it will take $(\alpha,\beta)$ to a pair
$(\alpha',\beta')$ such that there is no $g\in H_{(\Sigma)}$ taking $\alpha$
to $\beta$ is at least $\rho$. By what we were saying, this would imply that
there is no $g'\in H_{(\Sigma^{h})}$ such that $\alpha^{g'} = \beta$.

Now take a set $S$ of $r$ elements of $H$ taken uniformly at random.
By the above, 
for a given $(\alpha,\beta)$, the probability that, for every $h\in H$,
there is a $g'\in H_{(\Sigma^{h})}$ such that $\alpha^{g'} = \beta$ is
at most $(1-\rho)^r$. There would be such a $g'$ for every $h\in H$ if
there were a $g' \in H_{\Sigma^S}$ such that $\alpha^{g'} = \beta$ (since
such a $g'$ would be good for every $h\in H$).
Hence, the probability that there is at least
one pair $(\alpha,\beta)$ of distinct elements such that there 
is a $g'\in H_{\Sigma^S}$ with $\alpha^{g'} = \beta$ is at most $n^2 (1-\rho)^r$.
For any $r > 2 (\log n)/\log(1/(1-\rho))$, we get $n^2 (1-\rho)^r < 1$,
and thus there exists a set $S$ of at most $r$ elements such that there is
no such pair $(\alpha,\beta)$. If there is no such pair, then the only
possible element of $H_{\Sigma^S}$ is the identity, i.e., $H_{\Sigma^S} =\{e\}$.
\end{proof}

We wish to adapt Lem.~\ref{lem:babai} to hold for subsets $A\subset G$
 instead of subgroups $H$. Here one of our leitmotifs
reappears, but undergoes a change. Adapting a result on subgroups to
hold for subsets is a recurrent idea that we have seen throughout this
survey. However, so far, we have usually done this by relaxing the condition 
that $A$ be a subgroup into the condition that $A^3$ not be much 
larger than $A$. This is a tactic that often works when the underlying idea is 
basically quantitative, as is the case, e.g., for the orbit stabilizer theorem.

Another tactic consists in redoing an essentially constructive proof
keeping track of how many products are taken; this is, for example, how
a lemma of Bochert's gets adapted in \cite[Lem.~3.12]{MR3152942}. To generalize
Babai's splitting lemma, however, we will follow a third tactic -- namely,
making a probabilistic proof into what we can call a stochastic one, viz.,
one based on random walks, or, more generally, on a random process.

We already saw how to use random walks in this way in 
\S \ref{subs:gotr} (Babai-Beals-Seress); the idea is to approximate the uniform
distribution by a short random walk, using Lem.~\ref{lem:bulgo}.

\begin{lem}[Splitting lemma for sets (\cite{MR3152942}, Prop.~5.2)]\label{lem:juto}
Let $A\subseteq \Sym(n)$ with $A=A^{-1}$, $e\in A$ and $\langle A\rangle$ 
 $2$-transitive. 
Let $\Sigma\subseteq \{1,2,\dotsc, n\}$. Let $\rho>0$.Assume that
there are at least $\rho n(n-1)$ ordered pairs $(\alpha,\beta)$ of distinct elements of 
$\{1,2,\dotsc, n\}$ such that there is no 
$g\in (A^{\lfloor 9n^6\log n\rfloor})_{(\Sigma)}
$ with $\alpha^g=\beta$. Then there is a subset $S$
 of $A^{\lfloor 5n^6\log n\rfloor}$ with $$(A A^{-1})_{(\Sigma^S)}=\{e\}$$  and
$|S|\ll_\rho \log n$.
\end{lem}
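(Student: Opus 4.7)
The plan is to mirror the probabilistic proof of Babai's splitting lemma (Lemma~\ref{lem:babai}), with the uniformly random element $h \in H$ replaced by the outcome of a short random walk in $A$, exactly in the spirit of Proposition~\ref{prop:lalka}. First, I would apply Lemma~\ref{lem:bulgo} with $k = 2$ --- this is where the hypothesized $2$-transitivity of $\langle A\rangle$ is used --- to produce a symmetric generating subset $A_0 \subseteq A$ and a walk length $\ell = O_\rho(n^5 \log n)$ (by taking $\epsilon = \rho/3$ in that lemma) such that the outcome $h$ of a lazy random walk of length $\ell$ in $\Gamma(\langle A_0\rangle, A_0)$ sends any given ordered pair of distinct elements of $[n]$ to any other such pair with probability $(1 + O^*(\rho/3))(n-2)!/n!$. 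Because $A_0 = A_0^{-1}$, $h^{-1}$ has the same distribution as $h$, so for a fixed pair $(\alpha, \beta)$ of distinct elements, $(\alpha^{h^{-1}}, \beta^{h^{-1}})$ is nearly uniformly distributed over ordered pairs of distinct elements.

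Next I would take $r = C_\rho \log n$ (with $C_\rho$ to be chosen) independent walks $h_1, \ldots, h_r$ of this form and set $S = \{h_1, \ldots, h_r\}$. Each $h_i \in A^\ell \subseteq A^{\lfloor 5n^6 \log n\rfloor}$ as required, with enormous slack. Call an ordered pair $(\alpha', \beta')$ of distinct elements \emph{good} if there is no $g \in (A^{\lfloor 9n^6 \log n\rfloor})_{(\Sigma)}$ with ${\alpha'}^g = \beta'$; the hypothesis says at least $\rho n(n-1)$ pairs are good, so for each $i$ the probability that $(\alpha^{h_i^{-1}}, \beta^{h_i^{-1}})$ is good is at least $\rho(1 - \rho/3) \geq 2\rho/3$.

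The key translation --- exactly the one at the heart of the proof of Lemma~\ref{lem:babai} --- is the following. Suppose, for contradiction, that some non-identity $g' \in (AA^{-1})_{(\Sigma^S)}$ exists; then $g'$ moves some point $\alpha$ to some $\beta \neq \alpha$. For every $i$, the fact that $g'$ fixes $\Sigma^{h_i}$ pointwise gives $h_i g' h_i^{-1} \in (\Sym(n))_{(\Sigma)}$, and a direct computation yields $(\alpha^{h_i^{-1}})^{h_i g' h_i^{-1}} = \beta^{h_i^{-1}}$. Since $h_i \in A^\ell$ and $g' \in AA^{-1}$, we have $h_i g' h_i^{-1} \in A^{2\ell + 2} \subseteq A^{\lfloor 9n^6 \log n\rfloor}$, with plenty of room to spare. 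So $h_i g' h_i^{-1}$ witnesses that the pair $(\alpha^{h_i^{-1}}, \beta^{h_i^{-1}})$ is \emph{not} good, for every $i$.

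Finally, for fixed $(\alpha, \beta)$, the probability that all $r$ of the pairs $(\alpha^{h_i^{-1}}, \beta^{h_i^{-1}})$ fail to be good is at most $(1 - 2\rho/3)^r$, and a union bound over the fewer than $n^2$ choices of $(\alpha, \beta)$ bounds by $n(n-1)(1 - 2\rho/3)^r$ the probability that some non-identity $g'$ survives; for $r \geq (3/\rho) \log n$ (say), this is less than $1$, so some deterministic choice of $h_1, \ldots, h_r$ gives the desired $S$. The main technical point, modest here but a recurrent theme in this line of argument, is the careful word-length bookkeeping on the conjugate $h_i g' h_i^{-1}$ against the hypothesis bound $9n^6 \log n$; the rest of the argument is a direct adaptation of the probabilistic proof of Lemma~\ref{lem:babai}, with Lemma~\ref{lem:bulgo} playing the role of ``uniform sampling from $\langle A \rangle$''.
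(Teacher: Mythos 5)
Your proof is correct and follows exactly the route the paper sketches: adapt the probabilistic proof of Lemma~\ref{lem:babai}, replacing uniform sampling from $H$ by near-uniform sampling of ordered pairs via a short random walk (Lemma~\ref{lem:bulgo} with $k=2$), then use the conjugation identity $h A_{(\Sigma^h)} h^{-1} = (h A h^{-1})_{(\Sigma)}$ to turn a hypothetical nontrivial $g'\in (AA^{-1})_{(\Sigma^S)}$ moving $\alpha$ to $\beta$ into a witness that every pair $(\alpha^{h_i^{-1}},\beta^{h_i^{-1}})$ is bad, and finish by independence and a union bound. The word-length accounting against the thresholds $\lfloor 5n^6\log n\rfloor$ and $\lfloor 9n^6\log n\rfloor$ is correct and has the intended slack.
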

Passing from the statement of Lem.~\ref{lem:babai} to the statement of
Lem.~\ref{lem:juto}, some instances of $H$ get replaced by $A$ and some
get replaced by $A^k$, $k$ moderate. This is what makes it possible to
give a statement true for general sets $A$ without assumptions on the size
of $A^3$; of course, a moment's thought shows that
the statement is particularly strong when $A$ grows slowly. This is much as 
in Prop.~\ref{prop:port} or Prop.~\ref{prop:bantu}.
\begin{proof}[Sketch of proof] 
Exercise. Adapt the proof of 
 Lem.~\ref{lem:babai}, using Prop.~\ref{prop:lalka}. 
It is useful to note that
$g A_{(\Sigma^g)} g^{-1} = (g A g^{-1})_{(\Sigma)}$.
\end{proof}

Let us see how to use Lem.~\ref{lem:juto}. (This part of the argument
is similar to that in \cite{Pyb93}, which sharpened the bounds in 
\cite{Bab82}.) By pigeonhole, $(A A^{-1})_{\Sigma^S} = \{e\}$ can be the case 
only if $|A|\leq n^{|\Sigma^S|}$, and that can happen only if 
\[|\Sigma| \gg \frac{\log |A|}{(\log n)^2}.\]
This means that there is a constant $c$ such that, for every
$\Sigma\subset \{1,2,\dotsc,n\}$ with $|\Sigma| \leq c (\log |A|)/(\log n)^2$,
the assumption (``there are at least $\rho n (n-1)$ ordered pairs\dots'')
in Lem.~\ref{lem:juto} does not hold (since the conclusion cannot
hold.)

Thus, for any $\sigma<1$, we are guaranteed to be able to find $\Sigma =
\{\alpha_1,\alpha_2,\dotsc, \alpha_m\}$, $m\gg_\sigma (\log |A|)/(\log n)^2$,
such that, for $A' = A^{\lfloor 9n^6\log n\rfloor}$,
\begin{equation}\label{eq:janak}
|\alpha_i^{A'_{(\alpha_1,\dotsc,\alpha_{i-1})}}|\geq \sigma n
\end{equation}
for every $1\leq i\leq m$; we are setting $\rho = 1-\sigma$. (The use of stabilizer chains 
$A>A_{(\alpha_1)}>A_{(\alpha_1,\alpha_2)}>\dotsc$ goes back to the algorithmic
work of Sims \cite{MR0257203}, \cite{Sim71}, as does
the use of the size of the orbits in (\ref{eq:janak}); see
\cite[\S 4.1]{MR1970241}.) 

By (\ref{eq:janak}), $(A')^m$ occupies at least $(\sigma n)^m$ cosets of
the pointwise stabilizer $\Sym(n)_{(\Sigma)}$ (exercise; \cite[Lem. 3.17]{MR3152942}),
out of $n!/(n-m)!<n^m$ possible cosets of
$\Sym(n)_{(\Sigma)}$ in $\Sym(n)$. The number of cosets of
$\Sym(n)_{(\Sigma)}$ in the setwise stabilizer $\Sym(n)_{\Sigma}$ is 
$m!$, which is much larger than $n^m/(\sigma n)^m = (1/\sigma)^m$. (We can
work with $\sigma=9/10$, say.) A hybrid of Lem.~\ref{lem:dora} and
Lem.~\ref{lem:arana} (\cite[Lem.~3.7]{MR3152942}) then shows immediately that
$(A')^{2m} \cap \Sym(n)_{\Sigma}$ intersects many ($\geq \sigma^m m!$) 
cosets of $\Sym(n)_{(\Sigma)}$ (and, in particular, $|(A')^{2m} \cap
\Sym(n)_{\Sigma}|\geq \sigma^m m!$).

Let us see what we have got. We have constructed many elements of 
$(A')^{2m} \subset A^{n^{O(1)}}$ lying in a ``special subgroup'' 
$(\Sym(n))_{\Sigma}$. This is in loose analogy
to the situation over linear algebraic
groups, where we constructed many elements of $A^2$ lying in a special
subgroup $T = C(g)$ (Cor.~\ref{cor:nessumo}). Moreover, the elements
of $(A')^{2m} \cap (\Sym(n))_{\Sigma}$ will act (by conjugation) on
an even more special\footnote{Note the shift, or non-shift, in the meaning
of ``special'' (dictated by the requirements of the problems at hand). Before,
a special subgroup was exactly that, namely, an algebraic subgroup (``special''
meaning ``lying on a set of positive codimension, algebraically speaking'').
Here the role of ``special subgroups'' is played by stabilizers (in relation
to the natural
action of $\Sym(n)$ on $\{1,2,\dotsc,n\}$, or powers
thereof). The difference
is, however, less than it seems at first: the algebraic subgroup $T$ is also
given as a stabilizer $C(g)$ (in relation to
 the action by conjugation of $G$ on itself, which is a natural action of $G$ on
an affine space.)}
 subgroup, namely, $(\Sym(n))_{(\Sigma)}$.

This is a turning point in the proof of Thm.~\ref{thm:hs}, just as 
Cor.~\ref{cor:nessumo} (or its weaker version, \cite[Prop.~4.1]{Hel08})
was a turning point in the proof of Thm.~\ref{thm:main08}.

\subsection{Constructing small generating sets}\label{subs:genset}

Let $A$ be a set of generators of $\Sym(n)$ or 
$\Alt(n)$. The set $A$ may be large -- inconveniently so
for some purposes. Can we find a set $S\subset A^{\ell}$ of bounded size
($\ell$ moderate) so that $S$ generates $\langle A\rangle$?

This is a question that arises in the course of the proof of Thm.~\ref{thm:hs}.
Addressing it will give us the opportunity to show how to use stochastic 
processes other than a simple random walk in order to put a generalized
probabilistic method into practice.

Let us start with a simple lemma.

\begin{lem}{\rm \cite[Lem.~4.3]{MR3152942}}\label{lem:macanus}
Let $A\subset \Sym(n)$, $e\in A$. Assume $\langle A\rangle$ is transitive. Then there is a $g\in A^n$ such that
$|\supp(g)|\geq n/2$.
\end{lem}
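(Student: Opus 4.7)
My approach is a greedy iterative construction. Write $F(h)=\{x\in[n]:x^h=x\}$ for the fixed point set of a permutation $h$.

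First, if some single element $a\in A$ already satisfies $|\supp(a)|\geq n/2$, then taking $g=a\cdot e^{n-1}\in A^n$ (which is legitimate since $e\in A$) finishes the proof. Hence I may assume every element of $A$ has support strictly less than $n/2$.

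Starting from $g_0=e$, I will choose $a_1,\ldots,a_n\in A$ inductively and set $g_k=g_{k-1}a_k\in A^k$. The central claim is: whenever $|\supp(g_{k-1})|<n/2$, some choice of $a_k\in A$ gives $|\supp(g_k)|>|\supp(g_{k-1})|$. If this is granted, then after at most $n$ steps I reach $|\supp(g_k)|\geq n/2$ (since $|\supp|$ is a nonnegative integer strictly increasing until then), and I pad with copies of $e\in A$ to obtain $g_n\in A^n$ with $|\supp(g_n)|\geq n/2$.

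The key consequence of transitivity is the following. For any permutation $h$ with $F(h)$ a nonempty proper subset of $[n]$, the set $F(h)$ cannot be $A$-invariant---otherwise it would be a nontrivial $\langle A\rangle$-invariant subset of $[n]$, contradicting transitivity. So there exist $a\in A$ and $x\in F(h)$ with $x^a\notin F(h)$, and in particular $x^a\neq x$. For such a choice, $x$ enters the support of $h\cdot a$: one has $x^{ha}=(x^h)^a=x^a\neq x$, since $x^h=x$. Thus the ``gain'' of at least one new support point is guaranteed by transitivity alone.

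The main obstacle will be controlling the ``regressions'' that multiplication by $a_k$ can introduce: points $y\in\supp(g_{k-1})$ that become fixed by $g_k$, namely those $y$ with $(y^{g_{k-1}})^{a_k}=y$. I plan to dispose of this via a double-counting argument over $a\in A$. On the one hand, transitivity together with the observation above yields a lower bound on the total gains $\sum_{a\in A}|\{x\in F(g_{k-1}):x^a\notin F(g_{k-1})\}|$, since for each $x\in F(g_{k-1})$ at least one $a\in A$ moves $x$. On the other hand, each regression pair $(a,y)$ requires $a$ to send the specific point $y^{g_{k-1}}$ to the specific point $y$, so the total regression count is controlled by the structure of $A$ acting on $[n]$. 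Averaging then extracts an $a_k\in A$ for which the net change $|\supp(g_k)|-|\supp(g_{k-1})|$ is at least one, completing the inductive step and hence the proof.
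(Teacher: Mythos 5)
Your approach — a deterministic greedy construction where each multiplication by some $a_k\in A$ strictly increases the support — is genuinely different from the paper's, which is probabilistic: the paper picks, for each $i\in[n]$, an element $g_i\in A$ moving $i$ (possible by transitivity), forms the random product $g=g_1^{r_1}\dotsb g_n^{r_n}$ with i.i.d.\ uniform $r_i\in\{0,1\}$, and shows $\mathbb{E}[|\supp(g)|]\geq n/2$ via a pairing argument (flipping $r_j$ for the first index $j$ at which $g_j$ moves the current image of $i$ shows $\Prob(i\in\supp(g))\geq 1/2$). Unfortunately, your argument has a genuine gap, and I believe the approach cannot be repaired in this form.

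The gap is the ``central claim'' that whenever $|\supp(g_{k-1})|<n/2$ some $a\in A$ gives $|\supp(g_{k-1}a)|>|\supp(g_{k-1})|$. You propose to prove it by a double-counting over $a\in A$, but you never carry this out, and the ledger does not balance the way you suggest. Writing $h=g_{k-1}$ and $F=F(h)$, the gains for a given $a$ are $|F\cap\supp(a)|$ and the losses are $|\{y\in\supp(h):(y^h)^a=y\}|$. Your lower bound on total gains is only $\sum_{a\in A}|F\cap\supp(a)|\geq|F|$ (one witnessing $a$ per $x\in F$), but total losses can be as large as $|A|\cdot|\supp(h)|$: there is nothing in the hypotheses (you have no control on $|A|$ or on how many $a\in A$ carry a given point $y^h$ back to $y$) that caps the regression count below the gain count. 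Averaging therefore does not produce an $a$ with positive net gain.

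In fact the central claim is false as stated. Take $n=7$, $h=(1\,2\,3)$ (so $|\supp(h)|=3<7/2$), and $A=\{e,a,b,c\}$ with $a=(1\,3\,2)(4\,5\,6)$, $b=(1\,3\,2)(4\,7)$, $c=(1\,4)(2\,3)$. Then $\langle A\rangle$ is transitive on $[7]$, yet $ha=(4\,5\,6)$, $hb=(4\,7)$, $hc=(1\,3\,4)$, so $|\supp(ha)|=3$, $|\supp(hb)|=2$, $|\supp(hc)|=3$: no $a\in A$ increases the support of $h$. (In this particular $A$ the greedy walk from $e$ happens to jump to $a$, which already has support $6$; but nothing in your argument rules out the greedy process landing on a ``stuck'' permutation, and you have not supplied any invariant that would keep it away from such states.) The paper's probabilistic construction sidesteps this entirely: by choosing the $g_i$ once and for all and randomizing only the exponents, one avoids the adversarial cancellation that defeats the greedy method.
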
 
\begin{proof}[Sketch of proof]
For every $i\in \{1,2,\dotsc,n\}$, there is a $g_i\in A$
such that $i\in \supp(g)$ (why?). Consider $g = g_1^{r_1} g_2^{r_2}
\dotsc g_n^{r_n}$, where $r_1,r_2,\dotsc r_n\in \{0,1\}$ are independent
random variables taking the values $0$ and $1$ with equal probability. Show
that the expected value of $|\supp(g)|$ is at least $n/2$ (exercise).
\end{proof}

We will be using $g$ to move an element of $\{1,2,\dotsc,n\}$
 around and another element $h$ (produced
by a random walk) in order to scramble $\{1,2,\dotsc,n\}$.

\begin{prop}{\rm \cite[Lem.~4.5 and Prop.~4.6]{MR3152942}}\label{prop:crat} 
Let $A\subseteq \Sym(n)$ with $A = A^{-1}$, $e\in A$ and $\langle A\rangle
=\Sym(n)$ or $\Alt(n)$. Then there are $g\in A^n$
and $j,h\in A^{n^{O(\log n)}}$ such that $\langle g,j,h\rangle$ is transitive. 
\end{prop}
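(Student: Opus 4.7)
The plan is to combine Lem.~\ref{lem:macanus} (which produces an element $g \in A^n$ with $|\supp(g)| \geq n/2$) with Lem.~\ref{lem:bulgo} (which produces random-walk elements that are approximately uniform on small tuples) and then apply a probabilistic-method argument to construct $j$ and $h$ such that $\langle g, j, h\rangle$ is transitive with positive probability. Set $\Sigma = \supp(g)$.

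I would take $j$ and $h$ as independent outcomes of lazy random walks of length $\ell = n^{O(\log n)}$ on $\Gamma(\langle A_0\rangle, A_0)$, where $A_0 \subseteq A$ is the small (size $\ll n$) generating subset constructed in the proof of Lem.~\ref{lem:bulgo}. The length $\ell$ is chosen so that, by Lem.~\ref{lem:bulgo}, each of $j$ and $h$ is within exponentially small total variation distance of uniform on $k$-tuples of distinct elements of $[n]$, for any $k$ up to $c \log n$ with $c > 0$ small enough that $4 n^{2k+1} \log n \leq \ell$.

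To finish, $\langle g, j, h\rangle$ fails to be transitive iff some proper non-empty subset $\Omega \subsetneq [n]$ is simultaneously invariant under $g$, $j$, and $h$; such an $\Omega$ must be a union of $g$-cycles. I would bound $\Prob_j(\Omega \text{ is } j\text{-invariant})$ in two regimes. For $\min(|\Omega|, n-|\Omega|) \leq c \log n$, direct $k$-tuple uniformity with $k = \min(|\Omega|, n-|\Omega|)$ yields $\Prob_j \leq 1/\binom{n}{|\Omega|}$. For $|\Omega|$ in the middle range, I would apply the second moment method to the escape count $X_\Omega = |\{x \in \Omega : x^j \notin \Omega\}|$, whose mean is $|\Omega|(n-|\Omega|)/n$ by $1$-tuple uniformity and whose variance is comparable by $2$-tuple uniformity, giving $\Prob(X_\Omega = 0) \ll n/(|\Omega|(n-|\Omega|))$. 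Independence of $j$ and $h$ squares each of these bounds.

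The main obstacle is closing the combinatorial budget in the middle range: the number of $g$-invariant subsets of middle size $s$ can be as large as $\binom{n/2}{s} \sim 2^{n/2}$ (when $g$ attains its full quota of $n/2$ fixed points, any sub-subset of which is trivially $g$-invariant), which swamps the squared probability bound of $O(1/n^2)$ coming from the second moment method alone. I therefore expect the proof must proceed in two stages: first use $j$ to cut the number of orbits of $\langle g, j\rangle$ down to $O(\log n)$ with positive probability, via a finer argument that exploits the full $k = O(\log n)$ tuple uniformity (rather than just $k = 2$) against the cycle structure of $g$; then use $h$ to merge those $O(\log n)$ remaining orbits into one, via a union bound over pairs of orbits, each pair being connected by $h$ with probability bounded away from zero since $|\supp(g)| \geq n/2$ forces at least one of the two orbits in any such pair to have size $\Omega(n/\log n)$.
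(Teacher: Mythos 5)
Your diagnosis is sound up to the point where the real work begins, and then the key argument is missing. You correctly observe that $g \in A^n$ with $|\supp(g)| \geq n/2$ comes from Lem.~\ref{lem:macanus}, that a naive union bound over $\langle g\rangle$-invariant sets $\Omega$ fails in the middle range (indeed there can be $\sim 2^{n/2}$ such sets of a given middle size, overwhelming the $O(1/n^2)$ probability estimate from the second-moment method), and that the remedy is a two-stage approach: first reduce the number of orbits of the group generated by $g$ and one random-walk element to something polylogarithmic, then merge those few orbits with a second random-walk element. All of this matches the actual strategy in \cite{HS}.

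But the crucial step --- showing that the two-generator group $\langle g, h\rangle$ has only $O((\log n)^2)$ orbits with reasonable probability --- is not actually supplied; you only gesture at it (``a finer argument that exploits the full $k = O(\log n)$ tuple uniformity\dots against the cycle structure of $g$''). This is the bulk of the proposition (it corresponds to Lem.~4.5 of \cite{HS}), and the mechanism in the paper is rather different from what your phrasing suggests. The paper does not try to bound, invariant set by invariant set, the probability of $j$- or $h$-invariance; instead it fixes a random starting point $\beta \in [n]$ and considers the $\approx 2^k$ words $f(\vec{a}) = h g^{a_1} h g^{a_2} \cdots h g^{a_k}$ indexed by $\vec{a} \in \{0,1\}^k$, $k \approx 8\log n$. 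One then restricts to a sphere-packing code $V \subset \{0,1\}^k$ of size $\geq n$ and Hamming distance $>\log_2 n$, and proves via a path-coupling/divergence argument (conditioning step by step on the almost-uniform behavior of $h$ on fresh $k$-tuples, and using that $|\supp(g)| \geq n/2$ forces divergence at coordinates where two codewords disagree) that $\beta^{f(\vec{a})}$, $\vec{a} \in V$, are almost all distinct. This gives a lower bound on orbit sizes in expectation, hence an upper bound $O((\log n)^2)$ on the expected number of orbits of $\langle g, h\rangle$, after which the final merge by $j$ is the ``longer but easy exercise.'' Without this combinatorial construction, the gap you correctly identified remains unfilled; mere $O(\log n)$-tuple almost-uniformity of $j$ does not by itself beat the exponential count of candidate invariant sets.
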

\begin{proof}[Extended sketch of proof]
By Lem.~\ref{lem:macanus}, there is a $g\in A^n$ with $|\supp(g)|\geq n/2$.
Let $h \in A^\ell$, $\ell = n^{7 k}$, $k = \lceil 8 \log n \rceil$ (say) be the 
outcome of a lazy random walk on $\Gamma(G,A)$ of length $\ell$
(starting at $e$). We can assume $n$ is larger than an absolute constant.

We will consider words of the form 
\[
f(\vec{a}) = h g^{a_1} h g^{a_2} \dotsc h g^{a_k},
\]
where $a_1,\dotsc, a_k\in \{0,1\}$. We wish to show that, for
$\beta\in \{1,2,\dotsc,n\}$ taken at random (with the uniform
distribution on $\{1,2,\dotsc,n\}$), the orbit
$\beta^{f(\vec{a})}$, $\vec{a} \in \{0,1\}^k$,
 is likely to be very large ($\gg n/(\log n)^2$).

A simple sphere-packing bound shows that there is a set
$V\subset \{0,1\}^k$, $|V|\geq n$, such that the Hamming distance\footnote{The {\em Hamming distance} $d(\vec{x},\vec{y})$ between two elements
$\vec{x}, \vec{y}\in \{0,1\}^k$ is the number of indices 
$1\leq i\leq k$ such that $x_i \ne y_i$.} between
any two distinct elements of $V$ is at least $k/5 > \log_2 n$. (Exercise.) 
We wish to show that, for $\beta$ random and $\vec{a}, \vec{a}'
\in V$ distinct, it is very unlikely that $\beta^{f(\vec{a})}$ equal
$\beta^{f(\vec{a}')}$.

Write $\vec{a} = (a_1,a_2,\dotsc, a_k)$, $\vec{a}' = 
(a_1',a_2',\dotsc, a_k')$.
Consider the sequences
\begin{equation}\label{eq:futbol}\begin{aligned}
\beta_0 = \beta, \beta_1 = \beta_0^{h g^{a_1}}, \beta_2 = \beta_1^{h g^{a_2}},\dotsc,
\beta_k = \beta_{k-1}^{h g^{a_k}},\\
\beta_0' = \beta,
\beta_1' = \beta^{h g^{a_1'}}, \beta_2' = (\beta_1')^{h g^{a_2'}},\dotsc,
\beta_k' = (\beta_{k-1}')^{h g^{a_k'}}.\end{aligned}\end{equation}

It is very unlikely that $\beta^h = \beta$ (probability $\sim 1/n$)
or that $\beta^{h g} = \beta$, i.e., $\beta^h = \beta^{g^{-1}}$
(probability $\sim 1/n$). If neither of these unlikely occurrences takes
place, it is also very unlikely (total probability $\leq (1+o(1)) 4/n$) 
that $\beta_1^h$ or $\beta_1^{h g}$ equal
$\beta$ or $\beta^h$. The reason is that, since $\beta_1$ has not been seen
"before" (i.e., $(\beta,\beta_1)$ is a pair of distinct elements),
the distribution of $h^{\beta_1}$ is almost uniform, even conditionally
on $\beta=x$, for any $x$. (This can be easily made rigorous; it is much
as (for instance) in the proof of Prop.~\ref{prop:lalka}, right after
(\ref{eq:smet}).)
Thus, the probability that $\beta_1^h = \beta$ (for example) is $\sim 1/n$
(the same as the probability of $\beta_1^h = x$ for any $x$ other than
$\beta^h$).
Proceeding in this way, 
we obtain that it is almost
certain (probability $1-O(k/n)$) that $\beta_1,\beta_2,\dotsc,\beta_k$
are all distinct. (Recall that, by Lemma \ref{lem:bulgo}, 
a random walk of length $\ell$
mixes $k$-tuples (and even $2k$-tuples)
of distinct elements. It is also relevant that $k$ is
very small compared to $n$, as this means that hitting one of $k$
(or rather $2k$) visited elements by picking an element of $\{1,2,\dotsc,n\}$
at random is highly unlikely. We can keep our independence from the past
as long as we do not go back to it.)

Let us see what happens to $\beta_0',\beta_1',\dotsc$ in the meantime.
Start at $i=0$ and increase $i$ by $1$ repeatedly.
As long as $a_i = a_i'$, we have $\beta_i = \beta_i'$. As soon as 
$a_i \ne a_i'$ (denote by $i_1$ the first index $i$ for which this happens),
we {\em may} have $\beta_i \ne \beta_i'$; this happens when $\beta_{i-1}^h
\in \supp(g)$, i.e., it happens
 with probability
$\sim |\supp(g)|/n$. If this happens, then, by the same argument as above,
it is highly likely that the two paths in (\ref{eq:futbol}) diverge, i.e.,
$\beta_j \ne \beta_j'$ for all $j>i$, and, for that matter, that they also
avoid
each other's past ($\beta_j\ne \beta_l'$ for all $j>i$ and all $l<j$).
(It is useful to keep track of the latter condition for the same reason
as above, namely, to keep our independence from events that have already
been determined.)

Since $\vec{a}$, $\vec{a}'$ are at Hamming distance at least
$n$ from each other, it is very unlikely that $\beta_{i-1}^h\in
\supp(g)$ for all $i$ such that $a_i \ne a_i'$ (probability
$\leq (1-|\supp(g)|/n)^{n}\leq 2^{-n}$, since there are $n$ such indices $i$). 
Hence the two paths almost certainly diverge -- never to meet again, as
we just showed; in particular, $\beta^{f(\vec{a})}$ and 
$\beta^{f(\vec{a}')}$ are almost certainly distinct. They
are distinct with probability $\geq 1 - O((\log n)^2/n)$ for any distinct
$\vec{a}, \vec{a'}\in V$ and 
$\beta\in \{1,2,\dotsc,n\} $ random, to be precise.

By Cauchy-Schwarz, this implies that the expected value of 
$1/|\beta^{\langle g,h\rangle}|$ for $\beta$ random is $O((\log n)^2/n)$.
This implies, in turn, that the expected value of the
 number of orbits of $\langle g,h\rangle$
is $O((\log n)^2)$. (Exercises.)

A third element $j\in \langle A\rangle$ obtained by a random walk of length
$\ell$ almost certainly merges these orbits, i.e.,
$\langle g,h,j\rangle$ is transitive. (Longer but easy exercise.) 
Hence there exist (many) $g,h,j\in A^\ell$ such that $\langle g,h,j\rangle$ is 
transitive.
\end{proof}

\begin{cor}{\rm \cite[Cor.~4.7]{MR3152942}}\label{cor:tabor}
Let $A\subseteq \Sym(n)$ with $A = A^{-1}$, $e\in A$ and $\langle A\rangle
=\Sym(n)$ or $\Alt(n)$. Then, for every $k\geq 1$,
 there is a set $S\in A^{(3 n)^k n^{O(\log n)}}$ of size at most $3 k$ such
 that
$\langle S\rangle$ is transitive.
\end{cor}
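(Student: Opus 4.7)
I would prove this by induction on $k$, with Proposition~\ref{prop:crat} supplying both the base case and the inductive engine. For $k=1$, Prop.~\ref{prop:crat} produces $g \in A^n$ and $j,h \in A^{n^{O(\log n)}}$ with $\langle g,j,h\rangle$ transitive; since $e \in A$ gives $A^m \subseteq A^{m'}$ for $m \le m'$, all three elements lie in $A^{n^{O(\log n)}} \subseteq A^{(3n)\cdot n^{O(\log n)}}$, and $S = \{g,j,h\}$ satisfies the claim.

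For the inductive step, write $m_k = (3n)^k n^{O(\log n)}$ and assume we have produced $S_k \subseteq A^{m_k}$ with $|S_k| \le 3k$ and $\langle S_k\rangle$ transitive. Set $B = A^{m_k}$. Because $A = A^{-1}$ and $e \in A$, we have $B = B^{-1}$, $e \in B$, and $\langle B\rangle = \langle A\rangle \in \{\Sym(n),\Alt(n)\}$; thus Prop.~\ref{prop:crat} applies to $B$ in place of $A$, yielding $g' \in B^n$ and $j',h' \in B^{n^{O(\log n)}}$ with $\langle g',j',h'\rangle$ transitive. Translating back to $A$, we get $j',h' \in A^{m_k\cdot n^{O(\log n)}} \subseteq A^{m_k} \subseteq A^{m_{k+1}}$ (absorbing the extra $n^{O(\log n)}$ factor into the implicit constant of $m_k$) and $g' \in A^{n\cdot m_k} \subseteq A^{(3n) m_k} = A^{m_{k+1}}$. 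Setting $S_{k+1} = S_k \cup \{g',j',h'\}$ gives a set of size at most $3(k+1)$ contained in $A^{m_{k+1}}$, and transitivity is preserved since $\langle S_{k+1}\rangle \supseteq \langle S_k\rangle$.

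The substantive ingredient is Prop.~\ref{prop:crat} itself; the corollary amounts to iterating it and tracking exponents. The only technical point is that the hypotheses of Prop.~\ref{prop:crat}, namely $B = B^{-1}$, $e \in B$, and $\langle B\rangle \in \{\Sym(n),\Alt(n)\}$, must be preserved when we replace $A$ by the power $B = A^{m_k}$; all three follow immediately from the corresponding properties of $A$. I do not see any real obstacle beyond this bookkeeping.
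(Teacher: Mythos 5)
There is a genuine gap, and it traces to a likely typo in the survey's statement that your proof should have flagged as suspicious. As written, the conclusion ``$\langle S\rangle$ is transitive'' makes the corollary trivially equivalent to Prop.~\ref{prop:crat}: the set $S=\{g,j,h\}$ from the $k=1$ case already works for every $k\geq 1$, since $3\leq 3k$ and $n^{O(\log n)}\leq (3n)^k n^{O(\log n)}$. No induction is needed, and indeed your inductive step is vacuous --- $\langle S_{k+1}\rangle\supseteq\langle S_k\rangle$ is automatically transitive because $\langle S_k\rangle$ was, so appending $g',j',h'$ buys nothing. A corollary whose parameter $k$ costs exponentially more in the power of $A$ while strengthening the conclusion not at all should not exist; that is a signal that the stated conclusion is weaker than intended.

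The intended conclusion is that $\langle S\rangle$ is \emph{$k$-transitive}. This is forced both by the remark immediately following the corollary (``it is enough to set $k=6$, as the Classification of Finite Simple Groups implies that a $6$-transitive group must be either $\Sym(n)$ or $\Alt(n)$'' --- which makes sense only if the corollary delivers $k$-transitivity) and by the application later in \S 5.5, which invokes the corollary to produce a $2$-transitive group via $k=2$. Your proof does not establish $k$-transitivity for $k\geq 2$: applying Prop.~\ref{prop:crat} to $B=A^{m_k}$ still returns a merely transitive group on $\{1,\dots,n\}$, and unioning the output with $S_k$ gives no control over point stabilizers.

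The missing idea is exactly the one the paper's one-line proof names: Schreier generators, used to \emph{pass to pointwise stabilizers}. After Prop.~\ref{prop:crat} produces $g_1,j_1,h_1\in A^{n^{O(\log n)}}$ generating a transitive group, one takes Schreier generators of $(\langle A\rangle)_{(\{1\})}$ with respect to $A$; since the orbit of $1$ has size $n$, these are products of length $\leq 2n-1<3n$ in $A$, hence lie in $A^{3n}$, and they generate $(\Sym(n))_1$ or $(\Alt(n))_1$, which acts as $\Sym$ or $\Alt$ on $\{2,\dots,n\}$. Apply Prop.~\ref{prop:crat} to that generating set to get $g_2,j_2,h_2\in A^{3n\cdot n^{O(\log n)}}$ generating a transitive group on $\{2,\dots,n\}$ and fixing $1$. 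Iterating, the Schreier generators of $(\langle A\rangle)_{(\{1,\dots,i\})}$ lie in $A^{(3n)^i}$, and after $k$ rounds one has $S=\{g_i,j_i,h_i:1\leq i\leq k\}\subset A^{(3n)^{k}n^{O(\log n)}}$ with $\langle S\rangle$ transitive and $\Stab_{\langle S\rangle}(1,\dots,i-1)\supseteq\langle g_i,j_i,h_i\rangle$ transitive on $\{i,\dots,n\}$ for each $i\leq k$ --- i.e., $\langle S\rangle$ is $k$-transitive. This is where the factor $(3n)^k$ comes from, and it is the step your argument omits.
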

In particular, if we want $\langle S\rangle$ to be $\Sym(n)$ or $\Alt(n)$
(something we do not need in the application in \cite{MR3152942}) then it is
enough to set $k=6$, as the Classification of Finite Simple groups implies
that a $6$-transitive group must be either $\Sym(n)$ or $\Alt(n)$.
\begin{proof}[Sketch of proof]
Apply Prop.~\ref{prop:crat} repeatedly, using Schreier generators to pass
to pointwise stabilizers of $\{1\}$, $\{1,2\}$, etc.
\end{proof}

How far can arguments such as those in the proof of Prop.~\ref{prop:crat}
be pushed? Here there is again a ``classical'' argument to be examined in
the light of random processes and random walks, namely, the work
of Broder and Shamir on the spectral gap of random graphs \cite{BroSha}.
The ideas there and those in Prop.~\ref{prop:crat} are some of the elements
leading to \cite{HSZ}, which gives a bound of 
$O\left(n^2 (\log n)^{O(1)}\right)$ on the diameter of $\Gamma(\langle
A\rangle,
A)$ for $A = \{g,h\}$, $g, h\in \Sym(n)$ random.

\subsection{The action of the setwise stabilizer on the pointwise
stabilizer}

How is this all put together to give Thm.~\ref{thm:hs}? The entire argument
is outlined in detail in \cite[\S 1.5]{MR3152942}. Here, let us go over a crucial
step and look quickly at what then follows, skipping some of the complications. 

We are working with a set $A\subset \Sym(n)$ generating $\Alt(n)$ or
$\Sym(n)$. By the end of \S \ref{subs:magano}, we had constructed a large
subset $\Sigma\subset \{1,2,\dotsc, n\}$ ($m=|\Sigma|\gg (\log |A|)/(\log
n)^2$) such that $(A'')_\Sigma$ (where $A'' = (A')^{2 m} = A^{n^{O(1)}}$) 
intersects $\geq \sigma^m m!$ cosets of $\Sym(n)_{(\Sigma)}$;
in other words, the projection of $(A'')_\Sigma$
to $\Sym(\Sigma)$ (by restriction to $\Sigma$) has $\geq \sigma^m m!$
elements. 

By the trick of demanding (\ref{eq:janak}) for $m+1$ rather than $m$,
we can ensure that $\langle (A'')_{(\Sigma)}\rangle$ has at least one
large orbit $\Gamma$ ($|\Gamma|\geq \sigma n$). 
We can actually assume that $\langle (A'')_{(\Sigma)}\rangle$ acts as 
$\Sym$ or $\Alt$ on $\Gamma$, since otherwise we are done by a different
argument (called {\em descent} in \cite[\S 6]{MR3152942}, as in ``infinite descent'',
because it is inductive; it is also the one step that
involves the Classification of Finite Simple Groups). Then, by 
Cor.~\ref{cor:tabor}, there is a set $S\subset 
\left((A'')^{n^{O(\log n)}}\right)_{(\Sigma)}$,
$|S|\leq 6$, such that $\langle S\rangle$ acts as a $2$-transitive group
on $\Gamma$.

We now consider the action of the elements of $(A'')_\Sigma$ on the
elements of $S$ by conjugation. By the orbit-stabilizer principle
(Lem.~\ref{lem:orbsta}), either (a) there is an element $g\ne e$ of 
$(A'')_\Sigma$ commuting with every element of $S$, or (b) the orbit 
$\{g s g^{-1} : g\in (A'')_\Sigma\}$ of some $s\in S$ is of size
$\geq |(A'')_\Sigma|^{1/6} \geq (\sigma^m m!)^{1/6}$.
This orbit is entirely contained in the pointwise stabilizer 
$\left(A^{n^{O(\log n)}}\right)_{(\Sigma)}$.

In case (a), $g$ must act trivially on $\Gamma$ (since it commutes
with a $2$-transitive group on $\Gamma$) and so we are done by
Cor.~\ref{cor:sto} (Babai-Beals-Seress). In case (b), we have succeeded
in constructing many elements of $A^{n^{O(\log n)}}$ in
 the pointwise stabilizer of the set $\Sigma$.

This does not mean we are done yet; perhaps there were already many
elements of $A^2$ in the pointwise stabilizer of $\Sigma$. (Otherwise
Lem.~\ref{lem:dora} does mean that $A$ is growing rapidly, and so
we are done.) However, having many elements in the pointwise stabilizer
of $\Sigma$ does mean that we can start now an iteration, constructing
a second set $\Sigma_2$ and a longer stabilizer chain
satisfying (\ref{eq:janak}) with $A$ replaced
by $\left(A^{n^{O(\log n)}}\right)_{(\Sigma)}$, and then a third set
$\Sigma_3$,
and so on. Instead of focusing on making $A$ grow,
we focus on making the length of the stabilizer chain grow, until it
reaches size about $n$, at which point we are done.

\section{Some open problems}
The following questions are hard and far from new.
\begin{enumerate}
\item Consider all Cayley graphs $\Gamma(G,A)$ with $G=\SL_2(\mathbb{F}_p)$,
$A\subset G$, $|A|=2$, $\langle A\rangle = G$, $p$ arbitrary. Are they
all $\epsilon$-expanders for some fixed $\epsilon>0$? 

Here \cite[p. 96]{MR1235570} states that an affirmative answer was
made plausible by the experiments in \cite{MR1203870}. The proof in 
\cite{MR2415383} is valid for $A = A_0 \mo p$ ($A_0$ fixed) and 
also for $A$ random
(with probability $1$), among other cases; see also \cite{MR2746951}.
Expansion for $A=A_0 \mo p$, $A_0$ fixed, 
is known for all non-abelian simple groups
of Lie type $G$ (of bounded rank) thanks to \cite{SGV}; a proof of
expansion has also been announced for such $G$ and $A$ random
\cite{BGGT2}. Expansion has been conjectured
for general $G$ of bounded rank and 
arbitrary $A$; see, e.g., \cite[Conj.~2.29]{MR2869010}.

\item\label{it:usto} Does every Cayley graph $\Gamma(G,A)$ with $G=\SL_n(\mathbb{F}_p)$,
$A$ a set of generators of $G$ ($n$ and $p$ arbitrary) have diameter
$(\log |G|)^{C}$, where $C$ is an {\em absolute} constant?

(This is Babai's conjecture in the case of linear algebraic groups. 
The cases $n=2$, $n=3$ were proven in \cite{Hel08}, \cite{HeSL3}.
Both \cite{BGT} and \cite{PS} give this result with $C$ depending on $n$.)

\item\label{it:horol} Does every Cayley graph $\Gamma(\Sym(n),A)$ ($A$ a set of generators
of $\Sym(n)$) have diameter $n^{O(1)}$?

(This predates Babai's more general conjecture. Here Thm.~\ref{thm:hs}
(as in \cite{MR3152942}) is the best result to date.)

\item Let $A$ consist of two random elements of $\Alt(n)$. Is 
$\Gamma(\Alt(n),A)$ an $\epsilon$-expander, $\epsilon>0$ fixed? Is
the diameter of $\Gamma(\Alt(n),A)$ at most $n (\log n)^{O(1)}$?

(A ``yes'' to the former question implies a ``yes'' to the latter, but there
is no consensus on what the answer to either question should be.)

\item (``Navigation'') Given a set $A$ of generators of $G=\SL_2(\mathbb{F}_p)$ 
($|A|=2$ if you wish) and an element
$g$ of $G$, can you find in time $O((\log p)^{c_1})$ a product of length
$O((\log p)^{c_2})$ of elements of $A\cup A^{-1}$
equal to $g$?

(A probabilistic algorithm for a specific $A$ is given in \cite{MR1976231}.)
\end{enumerate}

One of the difficulties in answering question (\ref{it:horol})
resides in the fact that a statement such as Thm.~\ref{thm:main08}
cannot be true for all subsets $A$ of a symmetric group $G$. Both Pyber and Spiga have given
counterexamples. The following counterexample is due to Pyber:
let $G = \Sym(2n+1)$ and $A = H\cup \{\sigma,\sigma^{-1}\}$, where
$\sigma$ is the shift $m\to m+2 \mo 2n+1$ and $H$ is the subgroup generated
by all transpositions $(i,i+1)$ with $1\leq i\leq n$. Then $|A^3|\ll
|A|$. See also \cite[\S 3]{MR2898694} and \cite{MR2876252}.

 What happens if the conditions
on $A\subset \Sym(n)$ are strengthened? Can a statement such as Thm.~\ref{thm:main08} then be true? In a first draft of the present text, the author
asked whether $r$-transitivity is enough. That is: let $A\subset G$
($G=\Sym(n)$ or $G=\Alt(n)$) be a set of generators of $G$; assume 
that $A$ is $r$-transitive, meaning 
that, for any two $r$-tuples
$v_1$, $v_2$ of distinct elements of $\{1,2,\dotsc,n\}$, there 
is a $g\in A$ such that $g$ takes $v_1$ to $v_2$. If $r$ is greater
than a constant (say $6$), does it follow
that \begin{equation}\label{eq:gormo}\text{either}\;\;\;\;
|A^3|>|A|^{1+\delta}\;\;\;\;
\text{or}\;\;\;\;A^k = G,\end{equation}
where $k$ is an absolute constant? L. Pyber promptly showed that the answer
is ``no'': let $A$ be the union of any large subgroup $H<G$ and the union
of all $2r$-cycles; then $|A^3|\leq |C|^2 |A|\leq n^{4r} |A|$,
and this is much smaller than $|A|^{1+\delta}$ for $H$ large.

What if $A\subset G$ is of the form $A=B^k$, where $|B|=O(1)$? Is this
a sufficient condition for (\ref{eq:gormo}) to hold? It is easy to see that
a``yes'' answer here,
together with a stronger version of Prop.~\ref{prop:crat} (with $j,h\in
A^{n^{O(1)}}$ instead of $j,h\in
A^{n^{O(\log n)}}$),
would imply a ``yes'' answer to (\ref{it:horol}) above.


A separate, more open-ended question is that of the relevance of 
work on permutation groups 
to the study of linear algebraic groups. As we discussed before,
the problem of proving growth in $\Alt_n$ is closely related to that of
proving growth in $\SL_n$ uniformly as $n\to \infty$.
\begin{Cha}
Apply the ideas in \cite{MR3152942} to
question (\ref{it:usto}) above.
\end{Cha}

Finally, let us end with a question for which the time is arguably ripe, but 
for which there is still no full answer. The idea is to give a full
description of subsets of $A$ that fail to grow.

\begin{conj}
Let $K$ be a field. Let $A$ be a finite subset of $\GL_n(K)$ with
$A=A^{-1}$, $e\in A$. Then,
for every $R\geq 1$, either
\begin{enumerate}
\item $|A^3|\geq R |A|$, or else
\item there are two subgroups $H_1\leq H_2$ in $\GL_n(K)$ and an integer
$k = O_n(1)$ such that
\begin{itemize}
\item $H_1$ and $H_2$ are both normal in $\langle A\rangle$, and $H_2/H_1$
is nilpotent,
\item $A^k$ contains $H_1$, and
\item $|A^k\cap H_2|\geq R^{-O_n(1)} |A|$.
\end{itemize}
\end{enumerate}
\end{conj}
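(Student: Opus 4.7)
The plan is to reduce to the solvable case of \cite{GH2} and the simple-of-Lie-type case of \cite{BGT} and \cite{PS} by means of the Levi decomposition of the Zariski closure of $\langle A\rangle$. I first pass to $G = \overline{\langle A\rangle}^{\,\mathrm{Zar}} \subset \GL_n$; since $n$ is fixed, $G$ has bounded complexity in the sense of \cite{BGT}. After replacing $A$ by $A^{k_0}$ for some $k_0 = O_n(1)$ I may assume $\langle A\rangle \subset G^0(K)$ is Zariski-dense in the connected component $G^0$, at the cost of a factor of $R^{O_n(1)}$ by Lem.~\ref{lem:arana}. Write the Levi decomposition $G^0 = L \ltimes U$, with $U = \mathrm{Rad}_u(G^0)$ the unipotent (hence nilpotent and normal) radical and $L$ reductive, and let $\pi : G^0 \twoheadrightarrow L/Z(L) =: L^{\mathrm{ad}} \cong \prod_i L_i$ be the projection to the adjoint semisimple quotient, where each $L_i$ is simple of Lie type.

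Second, I would apply the generalizations of Thm.~\ref{thm:main08} due to \cite{BGT} and \cite{PS} to each projection $\pi_i(A) \subset L_i$: either some $\pi_i(A)$ grows, in which case the orbit-stabilizer lemma (Lem.~\ref{lem:arana}) propagates growth back to $A$ and gives $|A^3| \geq R|A|$, or every $\pi_i(A)$ almost-fills $L_i$ by the dichotomy of Thm.~\ref{thm:main08} combined with Zariski-density, in which case a high-multiplicity argument as in Prop.~\ref{prop:diplo}, applied in each $L_i$, forces $|\pi_i(A)| \leq R^{O_n(1)}$. In the latter branch, let $H_2 \trianglelefteq \langle A\rangle$ be the preimage in $G^0$ of the subgroup of $L^{\mathrm{ad}}$ generated by $\pi(A)$; pigeonholing on the $R^{O_n(1)}$ cosets of $H_2$ met by $A$ yields $|A^k \cap H_2| \geq R^{-O_n(1)}|A|$, and $H_2/U$ embeds as a subgroup of $L$ of size $R^{O_n(1)}$.

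Third, set $H_1 := U \cdot [H_2, H_2]$; this is normal in $\langle A\rangle$ (since both $U$ and $[H_2, H_2]$ are), and $H_2/H_1$ is the abelianization of $H_2/U$, hence abelian and in particular nilpotent. The containment $H_1 \subset A^k$ for some $k = O_n(1)$ is the decisive step. I would produce $U$ inside $A^k$ by iteratively applying the escape lemma (Lem.~\ref{lem:esc}) to the conjugation action of $\langle A\rangle$ on $U$ and on its lower central series subquotients: since $U$ is a unipotent algebraic subgroup of $\GL_n$, its nilpotency class is $O_n(1)$, so a bounded number of rounds of forming commutators $[a, u]$ with $a \in A$ and $u \in A^{k'} \cap U$ exhaust all of $U$. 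The commutator subgroup $[H_2, H_2]$ is produced by a bounded-length pivoting word in the style of Prop.~\ref{prop:jutor}, combined with the dimensional estimates of Prop.~\ref{prop:bantu} and \cite{PS} to guarantee that commutators of elements of $A^{O_n(1)}$ cannot all lie on a proper subvariety of $[H_2, H_2]$.

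The hard part will be obtaining polynomial (rather than quasi-polynomial) dependence on $R$. The corresponding \emph{qualitative} statement is essentially the Hrushovski \cite{MR2833482} and Breuillard--Green--Tao \cite{BGTstru} classification of approximate subgroups, but those works yield only ineffective or quasi-polynomial bounds. Upgrading the solvable-case argument of \cite{GH2} to interface uniformly with multiple simple factors of $L^{\mathrm{ad}}$, and descending into the nilpotent radical $U$ in a bounded number of steps, appears to require a polynomial Freiman--Ruzsa-type input for nilpotent groups, which is currently available only with polylogarithmic quality \cite{Tointon}. Absent such an input, the realistic target along these lines is the conjecture with $R^{O_n(1)}$ replaced by $R^{(\log R)^{O_n(1)}}$, as in \cite{BGTstru}.
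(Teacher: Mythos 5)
This is a conjecture, not a theorem: the paper states it as an open problem (``The case of $n$ general and $K$ general, as stated here, remains open'') and only lists partial results ($n=3$ over $\mathbb{F}_p$ in \cite{HeSL3}, general $n$ over $\mathbb{F}_p$ in \cite{GH2}, a soluble-quotient version for general $n$ and $K$ in \cite{PSSup}, and a quantitatively very weak qualitative version in \cite{BGTstru}). So there is no proof in the paper to compare against, and your proposal should be read as a sketch of strategy rather than as a claimed proof. You do acknowledge at the end that your route falls short of the stated bound, which is appropriate, but the earlier parts of the text read as more confident than the situation warrants.

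Two substantive issues. First, your strategy leans on a Levi decomposition $G^0 = L\ltimes U$ of the Zariski closure; over a general field $K$ -- in particular in positive characteristic -- a Levi subgroup need not exist (the extension $1\to U\to G^0\to L\to 1$ need not split), so you cannot simply pass to a reductive complement. Over $\mathbb{F}_p$ the issue can be sidestepped, which is part of why \cite{GH2} handles $\mathbb{F}_p$ but not $\mathbb{F}_q$, and this is essentially the obstruction that makes the general-$K$ case open. Second, your choice $H_1 := U\cdot[H_2,H_2]$ is more ambitious than the conjecture requires: the statement does not ask you to cover the entire unipotent radical with $A^k$. The subgroups $H_1\leq H_2$ are yours to choose; if $A$ lives mostly inside a nilpotent subgroup one may take $H_1=\{e\}$ and $H_2=\langle A\rangle$ and stop there. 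The conjecture is deliberately formulated with ``$H_2/H_1$ nilpotent'' precisely so that no structural classification (Freiman--Ruzsa) inside the nilpotent quotient is needed; it is a black box. Consequently your final paragraph's diagnosis -- that a polynomial Freiman--Ruzsa input for nilpotent groups is the missing ingredient -- is misplaced. (You also mischaracterize \cite{BGTstru}: its bounds are not quasi-polynomial in $R$ but non-explicit/ineffective, which is much weaker.) The genuine difficulties are rather (i) the failure of Levi splitting over general $K$, and (ii) controlling uniformly the interaction between the almost-simple quotients and the soluble radical without incurring losses that compound beyond $R^{O_n(1)}$; this is what \cite{PSSup} only achieves with ``soluble'' in place of ``nilpotent.''
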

This conjecture was made fairly explicitly in \cite{HeSL3} (see comments
after \cite[Thm~1.1]{HeSL3}), where it was also proven for $n=3$
and $K = \mathbb{F}_p$ (in a slightly weaker form). 
The same conjecture was proven for $n$ general
and $K = \mathbb{F}_p$ as \cite[Thm.~2]{GH2} (joint with Pyber and Szab\'o).
Breuillard, Green and Tao have given to this conjecture the name of
{\em Helfgott-Lindenstrauss conjecture}; in \cite{BGTstru}, they proved
a qualitative version with non-explicit bounds (valid even for non-algebraic
groups). The case of $n$ general
and $K$ general, as stated here,
 remains open. A slightly modified version of the conjecture (for $n$ and $K$ general, but
with $H_2/H_1$ soluble rather than
nilpotent) has been proven by Pyber and Szab\'o \cite[Thm.~8]{PSSup}.
\bibliographystyle{alpha}
\bibliography{survgr}
\end{document}